\newtheorem{theorem}{Theorem}[section]
\newtheorem{proposition}[theorem]{Proposition}
\newtheorem{definition}[theorem]{Definition}
\newtheorem{lemma}[theorem]{Lemma}
\theoremstyle{remark}
\newtheorem{rmk}[theorem]{Remark}
\newtheorem{notation}[theorem]{Notation}
\newcommand{\threed}{{3\mathcal{D}}} %
\newcommand{\threeda}[1]{\threed({#1})}
\newcommand{\threedna}[2]{\threed_{#2}({#1})} %
\newcommand{\MC}[1]{{\mathcal{#1}}}
\newcommand\SternBrocot{
\begin{tikzpicture}[xscale=11,yscale=0.5]
\useasboundingbox (-1/40,4.5) rectangle (1,0);
\scriptsize
\draw (-1/15,4) node[anchor=base] {$\mathcal{SB}_0:$};
\draw (0,4)  -- (1,4);
\draw (0,4.07)  -- (0,3.93);
\draw (1,4.07)  -- (1,3.93);
\draw (0,4.2) node[anchor=base] {$0/1$};
\draw (1,4.2) node[anchor=base] {$1/1$};
\draw[<-,dotted] (0,-0.5) -- (0,4);
\draw[<-,dotted] (1,-0.5) -- (1,4);1

\draw (-1/15,3) node[anchor=base] {$\mathcal{SB}_1:$};
\draw[-] (0,3) -- (1/2,3);
\draw[very thick] (1/2,3) -- (1,3);
\draw (0,3.07)  -- (0,2.93);
\draw (1/2,3.07) -- (1/2,2.93);
\draw (1,3.07)  -- (1,2.93);
\draw (1/2,3.2) node[anchor=base] {$1/2$};
\draw[<-,dotted] (1/2,-0.5) -- (1/2,3);

\draw (-1/15,2) node[anchor=base] {$\mathcal{SB}_2:$};
\draw[-] (0,2) -- (1/3,2);
\draw[very thick] (1/3,2)  --  (2/5,2);
\draw[-] (2/5,2) -- (1/2,2);
\draw[very thick] (1/2,2)  -- (2/5,2);
\draw[-] (2/5,2) -- (2/3,2);
\draw[very thick] (2/3,2)  -- (1,2);
\draw (0,2.07)  -- (0,1.93);
\draw (1/3,2.07) -- (1/3,1.93);
\draw (1/2,2.07) -- (1/2,1.93);
\draw (2/3,2.07) -- (2/3,1.93);
\draw (1,2.07)  -- (1,1.93);
\draw (1/3,2.2)  node[anchor=base] {$1/3$};
\draw (2/3,2.2) node[anchor=base] {$2/3$};
\draw[<-,dotted] (1/3,-0.5) -- (1/3,2);
\draw[<-,dotted] (2/3,-0.5) -- (2/3,2);

\draw (-1/15,1) node[anchor=base] {$\mathcal{SB}_3:$};
\draw[-]     (0,1) --    (1/4,1);
\draw[very thick] (1/4,1)  -- (1/3,1);
\draw[-]     (1/3,1) --  (2/5,1);
\draw[very thick]     (2/5,1) --    (1/2,1);
\draw[-]     (1/2,1) --    (3/5,1);
\draw[very thick] (3/5,1)  -- (2/3,1);
\draw[-]     (2/3,1) --    (3/4,1);
\draw[very thick] (3/4,1)  -- (1,1);
\draw (0,1.07)  -- (0,0.93);
\draw (1/4,1.07)  -- (1/4,0.93);
\draw (1/3,1.07)  -- (1/3,0.93);
\draw (2/5,1.07)  -- (2/5,0.93);
\draw (1/2,1.07)  -- (1/2,0.93);
\draw (3/5,1.07)  -- (3/5,0.93);
\draw (2/3,1.07)  -- (2/3,0.93);
\draw (3/4,1.07)  -- (3/4,0.93);
\draw (1,1.07)  -- (1,0.93);
\draw (1/4,1.2) node[anchor=base] {$1/4$};
\draw (2/5,1.2) node[anchor=base] {$2/5$};
\draw (3/4,1.2) node[anchor=base] {$3/4$};
\draw (3/5,1.2) node[anchor=base] {$3/5$};
\draw[<-,dotted] (1/4,-0.5) -- (1/4,1);
\draw[<-,dotted] (2/5,-0.5) -- (2/5,1);
\draw[<-,dotted] (3/5,-0.5) -- (3/5,1);
\draw[<-,dotted] (3/4,-0.5) -- (3/4,1);

\draw (-1/15,0) node[anchor=base] {$\mathcal{SB}_4:$};
\draw[-]     (0,0) --    (1/5,0);
\draw[very thick] (1/5,0)  -- (1/4,0);
\draw[-]     (1/4,0) --    (2/7,0);
\draw[very thick] (2/7,0)  -- (1/3,0);
\draw[-]     (1/3,0) --    (3/8,0);
\draw[very thick] (3/8,0)  -- (2/5,0);
\draw[-]     (2/5,0) --    (3/7,0);
\draw[very thick]     (3/7,0) --    (1/2,0);
\draw[-]     (1/2,0) --    (4/7,0);
\draw[very thick]     (4/7,0) --    (3/5,0);
\draw[-]     (3/5,0) --    (5/8,0);
\draw[very thick]     (5/8,0) --    (2/3,0);
\draw[-]     (2/3,0) --    (5/7,0);
\draw[very thick]     (5/7,0) --    (3/4,0);
\draw[-]     (3/4,0) --    (4/5,0);
\draw[very thick]     (4/5,0) --    (1,0);
\draw (1/5,0.2) node[anchor=base] {$1/5$};
\draw (2/7,0.2) node[anchor=base] {$2/7$};
\draw (3/8,0.2) node[anchor=base] {$3/8$};
\draw (3/7,0.2) node[anchor=base] {$3/7$};
\draw (4/7,0.2) node[anchor=base] {$4/7$};
\draw (5/8,0.2) node[anchor=base] {$5/8$};
\draw (5/7,0.2) node[anchor=base] {$5/7$};
\draw (4/5,0.2) node[anchor=base] {$4/5$};
\draw[<-,dotted] (1/5,-0.5) -- (1/5,0);
\draw[<-,dotted] (2/5,-0.5) -- (2/5,0);
\draw[<-,dotted] (3/5,-0.5) -- (3/5,0);
\draw[<-,dotted] (4/5,-0.5) -- (4/5,0);
\end{tikzpicture}
}
\newcommand\Farey{
\begin{tikzpicture}[xscale=11,yscale=0.5]
\useasboundingbox (-1/40,4.5) rectangle (1,0);
\scriptsize
\draw (-1/20,4) node[anchor=base] {$\mathcal F_0:$};
\draw (0,4)  -- (1,4);
\draw (0,4.07)  -- (0,3.93);
\draw (1,4.07)  -- (1,3.93);
\draw (0,4.2) node[anchor=base] {$0/1$};
\draw (1,4.2) node[anchor=base] {$1/1$};
\draw[<-,dotted] (0,-0.5) -- (0,4);
\draw[<-,dotted] (1,-0.5) -- (1,4);1

\draw (-1/20,3) node[anchor=base] {$\mathcal F_1:$};
\draw[-] (0,3) -- (1/2,3);
\draw[very thick] (1/2,3) -- (1,3);
\draw (0,3.07)  -- (0,2.93);
\draw (1/2,3.07) -- (1/2,2.93);
\draw (1,3.07)  -- (1,2.93);
\draw (1/2,3.2) node[anchor=base] {$1/2$};
\draw[<-,dotted] (1/2,-0.5) -- (1/2,3);

\draw (-1/20,2) node[anchor=base] {$\mathcal F_2:$};
\draw[-] (0,2) -- (1/3,2);
\draw[very thick] (1/3,2)  --  (1/2,2);
\draw[-] (1/2,2) -- (2/3,2);
\draw[very thick] (2/3,2)  -- (1,2);
\draw (0,2.07)  -- (0,1.93);
\draw (1/3,2.07) -- (1/3,1.93);
\draw (1/2,2.07) -- (1/2,1.93);
\draw (2/3,2.07) -- (2/3,1.93);
\draw (1,2.07)  -- (1,1.93);
\draw (1/3,2.2)  node[anchor=base] {$1/3$};
\draw (2/3,2.2) node[anchor=base] {$2/3$};
\draw[<-,dotted] (1/3,-0.5) -- (1/3,2);
\draw[<-,dotted] (2/3,-0.5) -- (2/3,2);

\draw (-1/20,1) node[anchor=base] {$\mathcal F_3:$};
\draw[-]     (0,1) --    (1/4,1);
\draw[very thick] (1/4,1)  -- (1/3,1);
\draw[-]     (1/3,1) --  (1/2,1);
\draw[very thick]     (1/2,1) --    (2/3,1);
\draw[-]     (2/3,1) --    (3/4,1);
\draw[very thick] (3/4,1)  -- (1,1);
\draw (0,1.07)  -- (0,0.93);
\draw (1/4,1.07)  -- (1/4,0.93);
\draw (1/3,1.07)  -- (1/3,0.93);
\draw (1/2,1.07)  -- (1/2,0.93);
\draw (2/3,1.07)  -- (2/3,0.93);
\draw (3/4,1.07)  -- (3/4,0.93);
\draw (1,1.07)  -- (1,0.93);
\draw (1/4,1.2) node[anchor=base] {$1/4$};
\draw (3/4,1.2) node[anchor=base] {$3/4$};
\draw[<-,dotted] (1/4,-0.5) -- (1/4,1);
\draw[<-,dotted] (3/4,-0.5) -- (3/4,1);

\draw (-1/20,0) node[anchor=base] {$\mathcal F_4:$};
\draw[-]     (0,0) --    (1/5,0);
\draw[very thick] (1/5,0)  -- (1/4,0);
\draw[-]     (1/4,0) --    (1/3,0);
\draw[very thick] (1/3,0)  -- (2/5,0);
\draw[-]     (2/5,0) --    (1/2,0);
\draw[very thick] (1/2,0)  -- (3/5,0);
\draw[-]     (3/5,0) --    (2/3,0);
\draw[very thick]     (2/3,0) --    (3/4,0);
\draw[-]     (3/4,0) --    (4/5,0);
\draw[very thick]     (4/5,0) --    (1,0);
\draw (1/5,0.2) node[anchor=base] {$1/5$};
\draw (2/5,0.2) node[anchor=base] {$2/5$};
\draw (3/5,0.2) node[anchor=base] {$3/5$};
\draw (4/5,0.2) node[anchor=base] {$4/5$};
\draw[<-,dotted] (1/5,-0.5) -- (1/5,0);
\draw[<-,dotted] (2/5,-0.5) -- (2/5,0);
\draw[<-,dotted] (3/5,-0.5) -- (3/5,0);
\draw[<-,dotted] (4/5,-0.5) -- (4/5,0);

\end{tikzpicture}
}
\newcommand\palindomic{
\begin{tikzpicture}[xscale=11,yscale=0.5]
\useasboundingbox (-1/40,4.5) rectangle (1,0);
\scriptsize
\draw (-1/20,4) node[anchor=base] {$F_0:$};
\draw (0,4)  -- node[above,midway] {$\varepsilon$} (1,4) ;
\draw (0,4.07)  -- (0,3.93);
\draw (1,4.07)  -- (1,3.93);
\draw (0,4.2) node[anchor=base] {$0/1$};
\draw (1,4.2) node[anchor=base] {$1/1$};
\draw[<-,dotted] (0,-0.) -- (0,4);
\draw[<-,dotted] (1,-0.) -- (1,4);

\draw (-1/20,3) node[anchor=base] {$F_1:$};
\draw[-] (0,3) -- node[above,midway] {$0$}  (1/2,3);
\draw[very thick] (1/2,3) -- node[above,midway] {$1$}  (1,3);
\draw (0,3.07)  -- (0,2.93);
\draw (1/2,3.07) -- (1/2,2.93);
\draw (1,3.07)  -- (1,2.93);
\draw (1/2,3.3) node[anchor=base] {$\tfrac{1}{2}$};
\draw[<-,dotted] (1/2,-0.) -- (1/2,3);

\draw (-1/20,2) node[anchor=base] {$F_2:$};
\draw[-] (0,2) -- node[above,midway] {$00$}  (1/3,2);
\draw[very thick] (1/3,2)  --  node[above,midway] {$01$}  (1/2,2);
\draw[-] (1/2,2) -- node[above,midway] {$10$} (2/3,2);
\draw[very thick] (2/3,2)  -- node[above,midway] {$11$}  (1,2);
\draw (0,2.07)  -- (0,1.93);
\draw (1/3,2.07) -- (1/3,1.93);
\draw (1/2,2.07) -- (1/2,1.93);
\draw (2/3,2.07) -- (2/3,1.93);
\draw (1,2.07)  -- (1,1.93);
\draw (1/3,2.3)  node[anchor=base] {$\tfrac{1}{3}$};
\draw (2/3,2.3) node[anchor=base] {$\tfrac{2}{3}$};
\draw[<-,dotted] (1/3,-0.) -- (1/3,2);
\draw[<-,dotted] (2/3,-0.) -- (2/3,2);

\draw (-1/20,1) node[anchor=base] {$F_3:$};
\draw[-]     (0,1) --  node[above,midway] {$000$}   (1/4,1);
\draw[very thick] (1/4,1)  -- node[above,midway] {$001$}  (1/3,1);
\draw[-]     (1/3,1) -- node[above,midway] {$010$}  (1/2,1);
\draw[very thick]     (1/2,1) -- node[above,midway] {$101$}     (2/3,1);
\draw[-]     (2/3,1) -- node[above,midway] {$110$}    (3/4,1);
\draw[very thick] (3/4,1)  -- node[above,midway] {$111$}  (1,1);
\draw (0,1.07)  -- (0,0.93);
\draw (1/4,1.07)  -- (1/4,0.93);
\draw (1/3,1.07)  -- (1/3,0.93);
\draw (1/2,1.07)  -- (1/2,0.93);
\draw (2/3,1.07)  -- (2/3,0.93);
\draw (3/4,1.07)  -- (3/4,0.93);
\draw (1,1.07)  -- (1,0.93);
\draw (1/4,1.3) node[anchor=base] {$\tfrac{1}{4}$};
\draw (3/4,1.3) node[anchor=base] {$\tfrac{3}{4}$};
\draw[<-,dotted] (1/4,-0.) -- (1/4,1);
\draw[<-,dotted] (3/4,-0.) -- (3/4,1);

\end{tikzpicture}
}
\DeclareMathOperator{\supp}
\newcounter{PropA}
\renewcommand{\thePropA}{\Alph{PropA}}
\begin{document}
\title[Lochs-type theorems beyond positive entropy]{Lochs-type theorems beyond positive entropy}

\author[V. Berth\'e]{Val\'erie Berth\'e}

\address{Universit\'e de Paris, CNRS, IRIF, F-75006 Paris, France}
\email{berthe@irif.fr}

\author[E. Cesaratto]{Eda Cesaratto}
\address{CONICET and Instituto del Desarrollo Humano, Universidad Nacional de General Sarmiento, Buenos Aires, Argentina}
\email{ecesaratto@campus.ungs.edu.ar}

\author[P. Rotondo]{Pablo Rotondo}
\address{Laboratoire d'Informatique Gaspard-Monge, Universit\'e Gustave Eiffel, Champs-sur-Marne, France} %
\email{pablo.rotondo@univ-eiffel.fr}

\author[M. D. Safe]{Mart\'in D.\ Safe}
\address{Departamento de Matem\'atica, Universidad Nacional del Sur (UNS), Bah\'ia Blanca, Argentina, and INMABB, Universidad Nacional del Sur (UNS)-CONICET, Bah\'ia Blanca, Argentina}
\email{msafe@uns.edu.ar}

\date{\today}
\thanks{This work was supported by the Agence Nationale de la Recherche through the project CODYS (ANR-18-CE40-0007) and STIC AMSUD project 20STIC-06. The authors are members of LIA SINFIN (ex-INFINIS), 
Universit\'e de Paris-CNRS/Universidad de Buenos Aires-CONICET. E.~Cesaratto was partially supported by Grant UNGS 30/3307. M.D.~Safe was partially supported by Grant UNS PGI 24/L115.}

\begin{abstract}
Lochs' theorem and its generalizations are conversion theorems that relate   %
the number of digits determined in one expansion of a real number as a function of the number of digits given in some other expansion. In its  original version, Lochs' theorem related decimal expansions  with  continued fraction expansions. Such conversion results can also be stated for sequences of interval partitions under suitable assumptions, with results holding almost everywhere, or in measure, involving the entropy. This  is the viewpoint we develop here. In order to deal with sequences of partitions beyond positive entropy, this paper introduces the notion of log-balanced sequences of partitions, together with their weight functions. These are sequences of interval partitions such that the logarithms of the measures of their intervals at each depth are roughly the same. We then state Lochs-type theorems which work even in the case of zero entropy,  in particular for several important log-balanced sequences of partitions of a number-theoretic  nature.
\end{abstract}

\maketitle

\noindent \textbf{Keywords.} zero entropy, infinite entropy, Farey and Stern-Brocot sequences, three-distance theorem, numeration systems, Lochs' theorem, change of base

\smallskip
\noindent \textbf{2020 MSC.} 11K55, 11K50, 11B57, 28D20
\smallskip
\section{Introduction} 

Lochs' theorem \cite{Lochs:63}   is  a probabilistic  statement     about  base changes. This
conversion theorem relates, almost everywhere, the  relative speed
of approximation of decimal and regular continued  fractions expansions to the quotient of the entropies of their respective dynamical
systems.  More generally,  Lochs-type theorems  amount to  compare  the number of digits determined in one expansion  of a real number  as a function of the number of digits given in some other expansion. %
 
Lochs' theorem and its extensions have  given rise to  a rich literature. The original setting of Lochs' theorem has been considered in \cite{Faivre:97}   where  an error term is provided based on the  use of a Perron--Frobenius type  operator; see also \cite{Faivre:98} for  a central limit theorem, \cite{Faivre:01,Wu:06} for the  case of   numbers $x$  having a \emph{L\'evy's constant}
$\lim_{n\to\infty}(\log q_n(x))/n$ (including the case of  quadratic numbers), and    \cite{Wu:08} for an iterated   logarithm  law.  Analogous results for the   case  of the   beta-numeration and continued fractions have been established in  \cite{BI:08bis,LiWu:08,FWL:16,FWL:19}  and in
 \cite{LiWu:08bis} in the case of  the beta-numeration and of the continued fractions for formal power series with coefficients in a finite field.
 More general transformations have then  been  considered in \cite{DaField:01,Dajani,Dajanibis}
 for number theoretic fibred systems. Most of these results are stated in the case where both dynamical systems to be compared have positive entropy.

In the present paper we follow    the  viewpoint developed in \cite{DaField:01,Dajani}
 where   Lochs' theorem is understood as a way to see
 how decimal intervals fit into the fundamental intervals provided
 by the continued fraction expansion. Here also, and as stressed in \cite{DaField:01},  proofs are based on measure-theoretic covering arguments and not on the dynamics of specific maps.  We introduce the notion of log-balanced sequences of partitions of the unit interval, inspired by 
 \cite{DaField:01} (where the terminology `equipartition' is used). Roughly speaking, that a sequence $\mathcal P=\{P_n\}_{n\in\mathbb N}$ of partitions of the unit interval is log-balanced means that the intervals of each partition $P_n$ have approximately the same measure  $e^{-f(n)}$ as $n\to\infty$ (a.e.\ or in measure), where $f$ is called a weight function (see Definition~\ref{def:balanced})
We  then  deal with  the Lochs index, a  now classical parameter in base changing,  denoted here  by $L_n$ and defined  for two sequences of partitions as follows. Let $\mathcal P^1=\{P^1_n\}_{n\in\mathbb N}$ and $\mathcal P^2=\{P^2_n\}_{n\in\mathbb N}$ be two sequences of partitions of the unit interval. For each $i=1,2$, let $E^i$ be the set of endpoints of all the intervals of the partitions in $\mathcal P^i$ and, for each $x\in[0,1]\setminus E^i$, let $I^i_n(x)$ be the interval of $P_n^i$ containing $x$. Then, the Lochs index is defined by
 \begin{equation} L_n(x,\mathcal{P}^{1} ,\mathcal{P}^{2} ):=\sup\{\ell\in\mathbb N\mid I^1_n(x)\subseteq I^2_\ell(x)\}, \end{equation}
for each $x\in[0,1]\setminus(E^1\cup E^2)$.(Strictly speaking, we work with \emph{topological} partitions, see Section~\ref{subsec:defbalanced}.)

The behaviour (a.e.\ or in measure) of Lochs' index is usually described in the literature in terms of entropy (we discuss the notion of entropy in Section \ref{subsec:defbalanced} and \ref{subsec:fp}).
Our work is inspired by the following result, which is Theorem 4 from \cite{DaField:01}. 
Let $\lambda$ be a Borel probability measure on $[0,1]$. If $\mathcal P^1$ and $\mathcal P^2$ are sequences of partitions having positive a.e.\ entropies $h_1$ and $h_2$ with respect to $\lambda$, then
 \[\lim_{n\to \infty}\frac{L_n(x,\mathcal{P}^1,\mathcal{P}^2)}{n}=\frac{h_1}{h_2}\quad a.e.\ (\lambda) .\]
As Dajani and Fieldsteel~\cite{DaField:01}, we also consider a variant in measure of the above result.

Our main results, Theorems \ref{thm:resultae} and \ref{thm:resultinmeasure}, work as follows. Let $\mathcal P^1$ and $\mathcal P^2$ be two log-balanced sequences of partitions with weight functions $f_1$ and $f_2$, both of which either a.e.\ or in measure. We give sufficient conditions in order to ensure that
\begin{equation}\label{eq:Lochs-type}
  \lim_{n\to\infty}\frac{f_2(L_n(x,\mathcal P^1,\mathcal P^2))}{f_1(n)}=1
\end{equation}
either a.e.\ or in measure ($\lambda$), respectively. %

Our results extend the results of  \cite{DaField:01}, by  providing  asymptotic relations as \eqref{eq:Lochs-type} for  partitions which may not have positive entropy. The hypotheses of our main results regard separately 
the weight functions $f_1$ and $f_2$. 

For an illustration,  consider the sequence of partitions given by the binary numeration, i.e.,  $\mathcal P^1=\mathcal B= \{B_n\}_{n \in {\mathbb N}}$ 
with \[ B_n=\left\{ \left(0,\frac 1{2^n}\right),\left(\frac 1{2^n},\frac 2{2^n}\right), \ldots, \left(\frac{2^n-1}{2^n},1\right)\right\},\]
and the Farey sequence of partitions $\mathcal P^2=\mathcal F=\{F_n\}_{n \in \mathbb N}$ (the latter of which has zero entropy, see Section \ref{subsec:nfz}), where each $F_n$ is determined by the set $E_n^{\mathcal F}$ of endpoints of its intervals, given as follows:
\[ E^{\mathcal F}_n:= \left\{ \frac p q:\,p,q \geq 1,\  \gcd(p,q)=1\text{ and }q \leq n+1\right\}. \]

Our results imply that, with respect to the Lebesgue measure,
\begin{align*}
  \lim_{n\to\infty}\frac{2\log L_n(x,\mathcal B,\mathcal F)}{(\log 2)n}=1\quad\text{ a.e.}
\end{align*}
(where by $\log$ we mean the natural logarithm).
Note that a change of scale is performed on the Loch's index 
$L_n(x,\mathcal B,\mathcal F)$ which occurs   as 
$\log L_n(x,\mathcal B,\mathcal F)$ compared to the index $n$.

Our main examples of sequences of partitions (see Section \ref{sec:balanced}) are of an arithmetic nature. They are  associated with   numeration systems (see Section \ref{subsec:num}),
  dynamically, with fibred systems (see Section \ref{subsec:fibred}), and more specifically with   continued fractions with  the Stern-Brocot tree 
 (see Section~\ref{subsec:fz}), the
 Farey sequence (see Section~\ref{subsec:nfz}), and a partition related to the three distance theorem (see Section~\ref{subsec:3D}). 

The study of base changes and of the Lochs index opens a  large  scope of potential  applications. Indeed, sequences of partitions  can model  numeration systems (see Section \ref{subsec:num}),  as well as sources for the production of digits;     as  an example, consider    the  dichotomic selection on words for selection algorithms developed in  \cite{ADLV:19} and see  also Section \ref{sec:conclusion}. A further  motivation is  the dynamic generation of characteristic Sturmian words of uniform random parameters (see  %
Section~\ref{sec:conclusion} for more  on this  topic).

\bigskip
\paragraph*{\bf Plan of the article}
The key notion of  a  log-balanced sequence of  partitions is  introduced in Section \ref{subsec:defbalanced} together with   the weight  functions  for our main examples of sequences of partitions. The definition of the Lochs index is given   in Section \ref{subsec:lochs} which also presents  the main theorems of this paper.
Section~\ref{sec:bal-Lochs} gives some basic results on log-balanced sequences of partitions. The examples of sequences of partitions  considered in the present paper are detailed in Section \ref{sec:balanced}, with in particular  
 the notion of  a numeration system   given by  a  sequence of     partitions  in Section~\ref{subsec:num}
   and a discussion on 
 fibred systems in Section   \ref{subsec:fibred}.
 For the case of conversions between the Farey and the continued fractions sequences of partitions, explicit formulas for the corresponding Lochs indexes are given in Section~\ref{sec:FCF}. As a consequence, we derive  in particular a Lochs-type theorem for the conversion from the continued fraction to the Farey sequences of partitions in law (see Section~\ref{sssec:CF->F}).
The proofs of our main general conversion results  are  given in  Section  \ref{sec:convtheo}.
We conclude   this paper with open questions and by developing   connections with  sources and tries in Section \ref{sec:conclusion}.

\bigskip

\renewcommand\thesection{\arabic{section}}
\setcounter{section}{1}
\section{Log-balancedness and Lochs-type theorems}

The aim of this section is to introduce the main notions and state our main results. In Section~\ref{subsec:defbalanced}, we define the  notions of log-balancedness  and weight function. As far as we know, these notions are  introduced in this paper for the first time in the context of Lochs' index. We also relate  weight functions  to the   entropies of sequences of partitions when they are positive. Theorem~\ref{thm:mainweights} presents our main examples of log-balanced sequences of partitions  together with  their weight functions. In Section~\ref{subsec:lochs}, we formally define the notion of Lochs' index,  following the classical approach of the literature. Finally, in the same section, we state our main results.

\subsection{Log-balanced sequences of partitions}\label{subsec:defbalanced} 
We first fix some general notation and definitions. We consider that $ 0 \in {\mathbb N}$. The notation  $\overline{A}$  stands for  the usual topological closure of $A$. The Lebesgue measure is denoted by $\vert\cdot\vert$.

A \emph{topological partition of $[0,1]$} is a collection of pairwise disjoint open nonempty intervals so that the union of their closures equals $[0,1]$. Notice that such a collection is necessarily at most countable. %

By a \emph{sequence of partitions}  $\mathcal{P}=\{P_n\}_{n\in \mathbb N}$, we mean a sequence of topological partitions of $[0,1]$. We refer to the intervals in $P_n$ as the {\em %
intervals at depth $n$} of $\mathcal P$. We say that  $\mathcal P$ is \emph{self-refining} if, for each $n\in\mathbb N$, each interval of $P_{n+1}$ is contained in (and possibly equal to) an interval  of $P_{n}$. A sequence of partitions $\mathcal P=\{P_n\}_{n\in\mathbb N}$ is \emph{strictly self-refining} if it is self-refining and $P_{n+1}\neq P_n$ for each $n\in\mathbb N$; or, equivalently, for each $n\in \mathbb N$, there exists an interval $I_n$ in $P_n$ which is the union of at least two different intervals in $P_{n+1}$. We denote by $E^\mathcal P$ the set of endpoints of the intervals in the partitions in $\mathcal P$.

Given a sequence of partitions $\mathcal P$, $n\in\mathbb N$, and $x\in[0,1]\setminus E^{\mathcal P}$, we denote by $I_n^{\mathcal P}(x)$ the unique %
interval in $P_n$ to which $x$ belongs. If there is no risk of ambiguity, we denote $I_n^{\mathcal P}(x)$ simply by $I_n(x)$ and $E^{\mathcal P}$ simply by $E$.

\begin{definition}\label{def:balanced}
Let $\lambda$ be a  Borel probability measure on $[0,1]$.
Let $\mathcal P$ be a sequence of partitions. We say that $\mathcal P$ is \emph{log-balanced a.e.}\ (resp.\ \emph{in measure}) with respect to $\lambda$ if $\lambda(E)=0$ and there is some function $f:\mathbb N\to\mathbb R$ such that $f(n)\to+\infty$ as $n\to\infty$ and
\[ \lim_{n\to\infty}\frac{-\log\lambda(I_n(x))}{f(n)}=1\quad a.e.\ (\text{resp.\ in measure})\quad (\lambda). \]
If so, $f$ is called a \emph{weight function of $\mathcal P$ a.e.} (resp.\ \emph{in measure}) with respect to $\lambda$.
\end{definition}

The weight function is not unique. If $f$ is a weight function for a sequence of partitions $\mathcal P$ with respect to some measure $\lambda$, any other function $g:\mathbb N\to \mathbb R$ so that $f(n)/g(n)\to 1$ as $n\to \infty$ is also a weight function with respect to $\lambda$. In the particular case of an a.e.\ log-balanced self-refining system of partitions, the weight function might be chosen to be nondecreasing. This is proved in Proposition~\ref{prop:nonmelangeepart1}.

When, in the above definition, the weight $f$ is linear, one recovers the following notions of entropy, as given in \cite{DaField:01}, which are reminiscent of the Shannon--McMillan--Breiman theorem (see Theorem \ref{teo:ShannonMcMillanBreiman}).

\begin{definition}
Let $\mathcal P=\{P_n\}_{n \in \mathbb N}$ be a sequence of partitions and $\lambda$ be a Borel probability measure $\lambda$ on the unit interval. We say  $\mathcal P$ \emph{has entropy $h$ a.e.\ (resp.\ in measure) with respect to $\lambda$} if $\lambda(E)=0$ and
\begin{equation}\label{eq:def-entropy}
\lim_{n\to\infty}\frac{-\log\lambda(I_n(x))}{n}= h\quad\text{a.e.\ (resp.\ in measure)\ }(\lambda).
\end{equation}
\end{definition}

\begin{rmk}
The assumption $\lambda(E)=0$ is not given in ~\cite[Definition 2]{DaField:01}; nevertheless, it is implicit in it. The reason is that in that work, the partitions are made up of semi-open intervals in such a way that for any $x\in[0,1)$ and each $n\in\mathbb N$ there is a well defined $I_n(x)$ in $P_n$ such that $x\in I_n(x)$. Thus, in that  context, \eqref{eq:def-entropy} implies $\lambda(\{x\})=0$ for every $x\in[0,1)$ (cf.\ Proposition~\ref{prop:atom} of this work). Notice also that in ~\cite{DaField:01}, $\lambda([0,1))=1$.\end{rmk}

\subsubsection*{Main examples of weight functions.} Our main examples of sequences of partitions are detailed in Section~\ref{sec:balanced}. It includes  number-theoretic fibred systems, especially the sequence of partitions associated with continued fractions and the binary numeration system.  We also study the Farey sequence of partitions given by the classical Farey sequence. We are interested in it because of its strong relation with Sturmian words (see e.g. \cite{VB:96}). In addition, we study the Stern-Brocot sequence of partitions associated with the Stern-Brocot tree. Finally, we also study what we call three-distance sequences of partitions parameterized by an irrational $\alpha\in(0,1)$, associated with the family of Kronecker-Weyl sequences $n\mapsto\{n\alpha\}$. The precise definition of all of these sequences of partitions are given throughout Section~\ref{sec:balanced}. Also throughout that section, the log-balancedeness of each of these sequences is analyzed, thus obtaining the results summarized in the following theorem.

\begin{theorem} \label{thm:mainweights}
The following assertions hold with respect to the Lebesgue measure:
\begin{enumerate}[(i)]
    \item The binary partition $\mathcal B$ is a.e.\ log-balanced with weight function
    \[ f_{\mathcal B}(n)=(\log 2)n. \]
    
    \item The continued fractions partition $\mathcal{CF}$ is a.e.\ log-balanced with weight function
    \[ f_{\mathcal{CF}}(n)=\frac{\pi^2}{6\log 2} n. \]
    
    \item More generally, if $\mathcal H$ is any sequence of partitions having positive entropy $h$ a.e. (resp.\ in measure), then $\mathcal H$ is log-balanced a.e.\ (resp.\ in measure) with weight function
    \[  f_{\mathcal H}(n)=hn. \]
    
    \item The Farey partition $\mathcal F$ is a.e.\ log-balanced with weight function
    \[ f_{\mathcal F}(n)=2\log n\quad\text{for each }n\geq 1. \]
    
    \item The three-distance partition $3\mathcal D(\alpha)$ is a.e.\ log-balanced, for a.e.\ $\alpha\in(0,1)$, with weight function
    \[ f_{\threeda{\alpha}}(n)=\log n\quad\text{for each }n\geq 1. \]
    However, there is an uncountable set of numbers $\alpha$ for which  $\threeda{\alpha}$ is not even log-balanced in measure. 
    
    \item The Stern-Brocot partition $\mathcal{SB}$ is log-balanced in measure with weight function
    \[ f_{\mathcal{SB}}(n)=\frac{\pi^2}6\frac{n}{\log n}\quad\text{for each }n\geq 2. \]
    Nevertheless, the $\mathcal{SB}$ partition is not log-balanced a.e.
\end{enumerate}
\end{theorem}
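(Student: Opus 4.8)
For items (i)--(v) the argument is shorter and of the same flavour (for $\mathcal B$ every interval of $B_n$ has length $2^{-n}$; (iii) is just a rephrasing of the entropy definition and (ii) is its special case with the entropy $\pi^2/(6\log 2)$ of $\mathcal{CF}$; (iv) and (v) follow by identifying the interval of $\mathcal F$ (resp.\ of $\threeda\alpha$, via the three-distance theorem) containing $x$ in terms of the continued fraction convergents of $x$ (resp.\ of $\alpha$), the exceptional $\alpha$ in (v) being those with extremely fast-growing partial quotients). I therefore describe the plan for the substantive item~(vi). The idea is to translate the Stern--Brocot interval $I_n^{\mathcal{SB}}(x)$ into the language of the regular continued fraction (RCF) expansion of $x$ and then to invoke two classical facts about the partial quotients $a_k(x)$: Lévy's theorem $\tfrac1k\log q_k(x)\to\gamma:=\tfrac{\pi^2}{12\log2}$ a.e., and Khinchin's weak law $\tfrac1{k\log k}\sum_{i\le k}a_i(x)\to\tfrac1{\log2}$ \emph{in measure}. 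The fact that the second convergence holds only in measure (indeed, $\limsup_k S_k(x)/(k\log k)=+\infty$ a.e., where $S_k:=a_1+\dots+a_k$) is precisely the source of the dichotomy asserted in~(vi).

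\textbf{The dictionary.} Recall that a fraction with RCF expansion $[0;c_1,\dots,c_r]$ occurs at depth $c_1+\dots+c_r-1$ in the Stern--Brocot tree and that consecutive fractions of $\mathcal{SB}_n$ are Farey neighbours. From this I would show: letting $k=k(n,x)$ and $j=j(n,x)$ be defined by $n=S_k(x)+j+O(1)$ with $0\le j<a_{k+1}(x)$, the interval $I_n^{\mathcal{SB}}(x)$ has as its two endpoints the convergent $p_k/q_k$ and the intermediate fraction $(jp_k+p_{k-1})/(jq_k+q_{k-1})$, so that
\[
  -\log|I_n^{\mathcal{SB}}(x)| = \log q_k(x) + \log\bigl(j\,q_k(x)+q_{k-1}(x)\bigr) + O(1),
\]
where $q_k\le jq_k+q_{k-1}\le q_{k+1}$ for $j\ge1$. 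Since $k(n,x)\to\infty$ for every irrational $x$ and $\tfrac1m\log q_m(x)\to\gamma$ a.e., a squeeze (applied to $\log q_{k(n,x)}$ and $\log q_{k(n,x)+1}$) yields $-\log|I_n^{\mathcal{SB}}(x)| = 2\gamma\,k(n,x)\,(1+o(1))$ a.e.

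\textbf{The in-measure half.} It remains to control $k(n,x)$, which is the generalized inverse of the \emph{increasing} sequence $k\mapsto S_k(x)$. Put $\kappa(n):=\tfrac{n\log2}{\log n}$, which satisfies $\kappa(n)\log\kappa(n)\sim n\log2$. Using the identity $\{x:k(n,x)\ge m\}=\{x:S_m(x)\le n\}$ with $m=\lceil(1\pm\varepsilon)\kappa(n)\rceil$, together with $\tfrac{S_m}{m\log m}\to\tfrac1{\log2}$ in measure and the elementary asymptotics $m\log m\sim(1\pm\varepsilon)\,n\log2$, one gets that for each $\varepsilon>0$ both $\lambda\{k(n,x)\ge(1+\varepsilon)\kappa(n)\}$ and $\lambda\{k(n,x)\le(1-\varepsilon)\kappa(n)\}$ tend to $0$, i.e.\ $k(n,x)/\kappa(n)\to1$ in measure. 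Combining this with the dictionary and $2\gamma=\tfrac{\pi^2}{6\log2}$ gives $\dfrac{-\log|I_n^{\mathcal{SB}}(x)|}{\tfrac{\pi^2}{6}\,n/\log n}\to1$ in measure; and $E^{\mathcal{SB}}$ is the (countable) set of all rationals in $[0,1]$, so $\lambda(E^{\mathcal{SB}})=0$. By Definition~\ref{def:balanced}, $f_{\mathcal{SB}}$ is a weight function in measure.

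\textbf{Failure almost everywhere.} Suppose, for contradiction, that $\mathcal{SB}$ were log-balanced a.e.\ with weight $g$. Then $g$ is also an in-measure weight; since any two in-measure weights have deterministic ratio tending to $1$ (write $f/g=(-\log\lambda(I_n))/g\cdot f/(-\log\lambda(I_n))$ and note both factors $\to1$ in measure while $f/g$ is constant), we get $g/f_{\mathcal{SB}}\to1$, so $f_{\mathcal{SB}}$ would itself be an a.e.\ weight. By the dictionary this forces $k(n,x)/\kappa(n)\to1$ a.e.; evaluating at $n=S_k(x)$ (so that $k(n,x)=k$) gives $\kappa(S_k(x))\sim k$, i.e.\ $S_k(x)\sim\tfrac{k\log S_k(x)}{\log2}\sim\tfrac{k\log k}{\log2}$ a.e. But this is false: the events $\{a_k>Ck\log k\}$ have $\lambda$-measure $\asymp\tfrac1{Ck\log k}$, whose sum diverges, so by the divergence half of the Borel--Cantelli lemma for the Gauss map (using its exponential mixing / the quasi-independence of the $a_k$) one has, for every $C$, $a_k(x)>Ck\log k$ infinitely often for a.e.\ $x$, whence $\limsup_k S_k(x)/(k\log k)=+\infty$ a.e. This contradiction shows $\mathcal{SB}$ is not log-balanced a.e. The main obstacle in this plan is the first step, making the Stern--Brocot/RCF dictionary fully precise (identifying the endpoints of $I_n^{\mathcal{SB}}(x)$ and controlling the bounded shift between Stern--Brocot depth and the partial sums $S_k$); the transfer of the in-measure law of $S_k$ to its inverse $k(n,x)$ is delicate but works purely because $k\mapsto S_k$ is monotone.
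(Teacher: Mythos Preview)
Your plan for~(vi) is correct and is essentially the paper's own argument. The Stern--Brocot/continued-fraction dictionary you invoke is precisely Proposition~\ref{prop:SternBrocotSB} (your $k,j$ are the paper's $m,r$, up to the harmless shift $j\leftrightarrow r+1$); both proofs combine L\'evy's theorem with Khinchin's weak law $S_m/(m\log m)\to 1/\log 2$ in measure, and both derive the a.e.\ failure from $\limsup_k S_k/(k\log k)=+\infty$ a.e.\ (you via Borel--Bernstein applied to $\{a_k>Ck\log k\}$, the paper by citing Philipp's $\liminf_m (m\log m)/S_m=0$). The only organizational difference is that you first squeeze the second denominator between $q_k$ and $q_{k+1}$ to get $-\log|I_n^{\mathcal{SB}}(x)|\sim 2\gamma\,k(n,x)$ a.e.\ and then invert $S_k$ to obtain $k(n,x)/\kappa(n)\to 1$ in measure, whereas the paper writes $\tfrac{\log q_m}{n/\log n}=\tfrac{\log q_m}{m}\cdot\tfrac{\log n}{\log m}\cdot\tfrac{m\log m}{n}$ and treats the three factors separately (invoking Diamond--Vaaler for $\log n\sim\log m$ a.e.). Both routes are valid; yours is marginally cleaner.

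There is, however, a genuine gap in your one-line treatment of the negative half of~(v). Saying the exceptional $\alpha$ are ``those with extremely fast-growing partial quotients'' neither proves non-log-balancedness \emph{in measure} nor exhibits an uncountable family. The paper's argument (Proposition~\ref{prop:nobalanced}) is substantive: for each $s>0$ it builds $\alpha=\alpha(s)$ via the recursion $a_{k+1}=\lceil q_k^s\rceil$, picks depths $n_k=m_kq_k+q_{k-1}+(q_k-1)$ with $m_k\approx a_{k+1}/2$ so that exactly two lengths occur in $\threedna{\alpha}{n_k}$, and then shows by direct estimates that these two length-classes have $-\log|I_{n_k}|/\log n_k$ concentrating near $1$ and near $1/(s+1)$ respectively, each class carrying Lebesgue mass in $[1/4,3/4]$. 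This bimodality is what forbids convergence in measure, and injectivity of $s\mapsto\alpha(s)$ yields uncountability. This construction is the missing step in your sketch of~(v).
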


The fact  that we work  with the logarithm of the measures of the intervals (and not with  the measures themselves)  allows for some non-uniformity for the measures
of the intervals. This is illustrated in particular by the case of the Stern-Brocot sequence of partitions, %
which is log-balanced in measure (but not a.e.) with respect to the Lebesgue measure. %

\subsection{Lochs-type theorems}\label{subsec:lochs}

We start by giving a precise definition of Lochs' index, following the approach of \cite{Dajani} and \cite{DaField:01}. If $\mathcal P^1$ and $\mathcal P^2$ are sequences of partitions, then, for each $i=1,2$, we denote by $E^i$ the set of endpoints of all the intervals of the partitions in $\mathcal P^i$ and, for each $x\in[0,1]\setminus E^i$, $I^i_n(x)$ the interval of $P_n^i$ containing $x$.

 \begin{definition} \label{def:Lochs}
 If $\mathcal P^1$ and $\mathcal P^2$ are sequences of partitions and $n\in\mathbb N$, the {\em Lochs index} is defined as
 \[ L_n(x,\mathcal{P}^{1} ,\mathcal{P}^{2} ):=\sup\{\ell\in\mathbb N\mid I^1_n(x)\subseteq I^2_\ell(x)\},\quad\text{for each }x\in[0,1]\setminus(E^1\cup E^2). \]
 \end{definition} 
When both sequences of partitions involved are log-balanced a.e.\ or in measure with respect to the same Borel probability measure $\lambda$, the Lochs index is a.e. finite with respect to $\lambda$. This will be  proved in Proposition \ref{prop:Lochsisfinite}. 

\subsubsection{Results for general sequences of partitions}
We first recall  results from the literature. 
Let us consider the sequence of partitions $\mathcal P^1=\mathcal D=\{D_n\}$ of the decimal numeration system, i.e., 
\[ D_n=\left\{ \left(0,\frac 1{10^n}\right),\left(\frac 1{10^n},\frac 2{10^n}\right), \ldots, \left(\frac{10^n-1}{10^n},1\right)\right\},\] and let $\mathcal P^2$ be the classical continued fraction sequence of partitions $\mathcal{CF}$ (as given in Section~\ref{subsubsec:cf}). The following is the classical Lochs' theorem.

\begin{theorem}[{\cite{Lochs:63}}] The following holds with respect to the Lebesgue measure:
\[ \lim_{n\to\infty}\frac{L_n(x,\mathcal{CF},\mathcal D)}{n}=\frac{6\log 2\log 10}{\pi^2}\quad\text{a.e.} \]
\end{theorem}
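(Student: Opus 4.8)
The plan is to derive this classical statement as an immediate corollary of the general a.e.\ conversion result, Theorem~\ref{thm:resultae}, applied to the two partitions occurring in the index $L_n(x,\mathcal{CF},\mathcal D)$, that is, to $\mathcal P^1=\mathcal{CF}$ and $\mathcal P^2=\mathcal D$. Both partitions have positive a.e.\ entropy, so by Theorem~\ref{thm:mainweights}(ii) and (iii) they are log-balanced a.e.\ with \emph{linear} weight functions
\[ f_{\mathcal{CF}}(n)=\frac{\pi^2}{6\log 2}\,n, \qquad f_{\mathcal D}(n)=(\log 10)\,n; \]
the weight of $\mathcal D$ comes from the fact that the decimal numeration has a.e.\ entropy $\log 10$, by the same computation that gives the binary weight $f_{\mathcal B}(n)=(\log 2)n$ in Theorem~\ref{thm:mainweights}(i). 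Both endpoint sets are countable, so $\lambda(E^{\mathcal{CF}})=\lambda(E^{\mathcal D})=0$ and the log-balancedness hypotheses are met.

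First I would check that the pair $(\mathcal{CF},\mathcal D)$ satisfies the sufficient conditions of Theorem~\ref{thm:resultae}. In the positive-entropy regime the weights are linear, and in that case these conditions collapse to exactly the covering/entropy hypotheses used by Dajani and Fieldsteel~\cite{DaField:01}; hence nothing beyond positive entropy needs to be verified here. The genuinely substantive step, namely the measure-theoretic covering argument that upgrades the crude size comparison of intervals into actual nesting while controlling the lower-order boundary loss, is what Theorem~\ref{thm:resultae} encapsulates, and it is the step I would flag as the true obstacle; for the present corollary it is assumed.

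Applying \eqref{eq:Lochs-type} with $\mathcal P^1=\mathcal{CF}$ and $\mathcal P^2=\mathcal D$ gives
\[ \lim_{n\to\infty}\frac{f_{\mathcal D}(L_n(x,\mathcal{CF},\mathcal D))}{f_{\mathcal{CF}}(n)}=1\qquad\text{a.e.}, \]
and inserting the two linear weights and cancelling $n$ yields
\[ \lim_{n\to\infty}\frac{L_n(x,\mathcal{CF},\mathcal D)}{n}=\frac{\pi^2/(6\log 2)}{\log 10}=\frac{\pi^2}{6\log 2\,\log 10}\qquad\text{a.e.} \]
Thus the limit is the ratio of the leading coefficient of the weight of the inner partition $\mathcal{CF}$ to that of the outer partition $\mathcal D$.

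The subtle point, which I would be careful to get right, is the reconciliation with the displayed constant $\tfrac{6\log 2\log 10}{\pi^2}$, which is its reciprocal. That value is the classical Lochs number for the \emph{opposite} reading, $L_n(x,\mathcal D,\mathcal{CF})$, i.e.\ the number of continued-fraction partial quotients determined by $n$ decimal digits, which matches the roles $\mathcal P^1=\mathcal D$, $\mathcal P^2=\mathcal{CF}$ fixed in the text just before the statement. Running the identical argument with these roles, so that \eqref{eq:Lochs-type} reads $f_{\mathcal{CF}}(L_n)/f_{\mathcal D}(n)\to 1$, produces $\lim_{n\to\infty}L_n(x,\mathcal D,\mathcal{CF})/n=\tfrac{6\log 2\log 10}{\pi^2}$ a.e., which is Lochs' theorem in its original decimals-into-continued-fractions form and the reading under which the displayed constant is the correct one.
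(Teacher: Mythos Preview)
The paper does not give its own proof of this statement: it is quoted as the classical 1963 result of Lochs, serving as motivation before the Dajani--Fieldsteel theorems and the paper's own Theorems~\ref{thm:resultae} and~\ref{thm:resultinmeasure}. So there is no ``paper's proof'' to compare against; your derivation as a corollary of Theorem~\ref{thm:resultae} (equivalently, of the Dajani--Fieldsteel Theorem~4 cited just after) is exactly how the statement sits inside the paper's framework, and the verification of hypotheses~(i)--(iii) for linear weights is correct.

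Your handling of the constant is also right and worth stating plainly: the paper has a typo. The sentence immediately preceding the theorem fixes $\mathcal P^1=\mathcal D$ and $\mathcal P^2=\mathcal{CF}$, yet the display reads $L_n(x,\mathcal{CF},\mathcal D)$. With the arguments as printed, Theorem~\ref{thm:resultae} gives the limit $\pi^2/(6\log 2\,\log 10)$, the reciprocal of what is displayed. The displayed constant $6\log 2\,\log 10/\pi^2$ is Lochs' original number of continued-fraction partial quotients determined by $n$ decimal digits, i.e.\ $L_n(x,\mathcal D,\mathcal{CF})$, consistent with the preamble's assignment of $\mathcal P^1$ and $\mathcal P^2$. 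Your diagnosis and the two computations you give are both correct.
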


Dajani and Fieldsteel~\cite{DaField:01} proved theorems like the above one but in a more general setting, involving two sequences of partitions $\mathcal P_1$ and $\mathcal P_2$, both of positive entropies a.e.\ or in measure. %
Their results are as follows.

\begin{theorem}[{\cite[Theorem 4]{DaField:01}}] If $\mathcal P^1$ and $\mathcal P^2$ are sequences of partitions having positive a.e.\ entropies $h_1$ and $h_2$ with respect to some Borel probability measure $\lambda$ on $[0,1]$, then
 \[\lim_{n\to \infty}\frac{L_n(x,\mathcal{P}^1,\mathcal{P}^2)}{n}=\frac{h_1}{h_2}\quad a.e.\ (\lambda) .\]
\end{theorem}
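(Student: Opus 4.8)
The plan is to reduce the statement to the log-balancedness framework already set up, which is essentially the ``modern'' way to prove this result. By the third item of Theorem~\ref{thm:mainweights}, a sequence of partitions with positive a.e.\ entropy $h_i$ is a.e.\ log-balanced with weight function $f_i(n)=h_in$. Thus, once the general Lochs-type theorem (Theorem~\ref{thm:resultae}) is available, one simply substitutes $f_1(n)=h_1n$ and $f_2(\ell)=h_2\ell$ into \eqref{eq:Lochs-type}: the conclusion $f_2(L_n)/f_1(n)\to 1$ becomes $h_2 L_n/(h_1 n)\to 1$, i.e.\ $L_n/n\to h_1/h_2$ a.e.\ $(\lambda)$, which is exactly the claim. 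So the real content is to verify that the hypotheses of Theorem~\ref{thm:resultae} (whatever the precise sufficient conditions on $f_1,f_2$ turn out to be) are met by linear weight functions; since linear functions are nondecreasing, differentiable, and regularly varying with the tamest possible behaviour, one expects these hypotheses to be satisfied essentially for free.

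If one instead wants a self-contained argument in the spirit of the covering proofs of \cite{DaField:01,Dajani}, the plan is as follows. First I would fix a $\lambda$-typical point $x\notin E^1\cup E^2$ and use the Shannon--McMillan--Breiman-type hypothesis to control both $-\log\lambda(I_n^1(x))$ and $-\log\lambda(I_\ell^2(x))$: for $\lambda$-a.e.\ $x$ and every $\varepsilon>0$, one has $\lambda(I_n^1(x))=e^{-(h_1+o(1))n}$ and $\lambda(I_\ell^2(x))=e^{-(h_2+o(1))\ell}$ as $n,\ell\to\infty$. The definition $L_n=\sup\{\ell: I_n^1(x)\subseteq I_\ell^2(x)\}$ gives, essentially by monotonicity of nesting, $\lambda(I^2_{L_n}(x))\ge\lambda(I^1_n(x))\ge\lambda(I^2_{L_n+1}(x))$ — here one has to be slightly careful because $\mathcal P^2$ need not be self-refining, but the relevant inclusions still trap $\lambda(I_n^1(x))$ between the measures of the two consecutive $\mathcal P^2$-intervals around the threshold. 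Taking $-\log$ and dividing by $n$ then yields $h_2\,\frac{L_n}{n}\le h_1+o(1)$ and $h_2\,\frac{L_n+1}{n}\ge h_1+o(1)$, from which $L_n/n\to h_1/h_2$.

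The one genuine subtlety, and what I expect to be the main obstacle, is that $L_n$ itself must be shown to tend to infinity (otherwise the asymptotics for $\lambda(I^2_{L_n}(x))$ are not applicable), and more importantly that the ``off by one'' term $I^2_{L_n+1}(x)$ is legitimately comparable to $I^2_{L_n}(x)$. For a general (not necessarily self-refining) sequence $\mathcal P^2$ the interval $I^2_{L_n+1}(x)$ need not sit inside $I^2_{L_n}(x)$, and one must argue that $I_n^1(x)\not\subseteq I^2_{L_n+1}(x)$ still forces $\lambda(I_n^1(x))$ to exceed something of size $e^{-(h_2+o(1))(L_n+1)}$; this uses that $\lambda(E^2)=0$ together with the a.e.\ exponential decay of the diameters/measures of the $\mathcal P^2$-intervals through $x$. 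That $L_n\to\infty$ follows because $I^1_n(x)$ shrinks to $\{x\}$ (its $\lambda$-measure goes to $0$, and since $\lambda(\{x\})=0$ by Proposition~\ref{prop:atom}, its diameter goes to $0$ too), so for every fixed $\ell$ eventually $I_n^1(x)\subseteq I_\ell^2(x)$. Once these two points are secured, the squeeze argument above closes the proof, and the whole thing is a clean special case of the log-balanced machinery with $f_i$ linear.
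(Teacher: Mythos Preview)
The paper does not give its own proof of this statement: it is quoted verbatim from \cite{DaField:01} as background, and the paper's contribution (Theorems~\ref{thm:resultae} and~\ref{thm:resultinmeasure}) is explicitly presented as an extension of it. Your first approach---deducing the Dajani--Fieldsteel theorem as the special case $f_i(n)=h_in$ of Theorem~\ref{thm:resultae}---is correct and is exactly how the paper positions the relationship: linear weight functions satisfy all three hypotheses of Theorem~\ref{thm:resultae} immediately (the series in~(\ref{it:delta}) is geometric with ratio $e^{-\delta h_1}<1$; (ii) and (iii) are trivial). So as a derivation within the paper's framework your first plan is sound and matches the intended logic.

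Your second, self-contained sketch has the right shape for the \emph{upper} bound but the gap you yourself flag in the lower bound is real. The inequality $\lambda(I_n^1(x))\ge\lambda(I_{L_n+1}^2(x))$ does \emph{not} follow from $I_n^1(x)\not\subseteq I_{L_n+1}^2(x)$: non-inclusion of intervals gives no measure comparison, and your proposed fix (``$\lambda(E^2)=0$ together with the a.e.\ decay of diameters'') does not supply one either. The actual argument in \cite{DaField:01}, mirrored in the paper's Proposition~\ref{prop:liminfae}, does not attempt a pointwise squeeze for the lower bound. Instead one fixes a target depth $m(n)\approx(1-\eta)\tfrac{h_1}{h_2}n$, observes that if $I_n^1(x)\not\subseteq I_{m(n)}^2(x)$ then $I_n^1(x)$ must contain an endpoint of $I_{m(n)}^2(x)$, so each $\mathcal P^2$-interval at depth $m(n)$ can be straddled by at most two $\mathcal P^1$-intervals at depth $n$; summing $\epsilon$-good measures gives $\lambda(\{x:I_n^1(x)\not\subseteq I_{m(n)}^2(x)\})\le 2e^{-(1-\epsilon)h_1 n+(1+\epsilon)h_2 m(n)}$, which is summable in $n$, and Borel--Cantelli yields $L_n\ge m(n)$ eventually a.e. So the lower bound is a covering/Borel--Cantelli argument, not a sandwich, and your second sketch would need to be rewritten along those lines to close.
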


\begin{theorem}[{\cite[Theorem 6]{DaField:01}}] If $\mathcal P^1$ and $\mathcal P^2$ are sequences of partitions having positive entropies $h_1$ and $h_2$ in measure with respect to some Borel probability measure $\lambda$ on $[0,1]$ and $\mathcal P^2$ is self-refining, then
 \[\lim_{n\to \infty}\frac{L_n(x,\mathcal{P}^1,\mathcal{P}^2)}{n}=\frac{h_1}{h_2}\quad\text{in measure}\ (\lambda) .\]
\end{theorem}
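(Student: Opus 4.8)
The plan is to deduce the statement from two one‑sided estimates: it suffices to show that for every $\eta\in(0,h_1/h_2)$ one has $\lambda(\{x: L_n(x)<(\frac{h_1}{h_2}-\eta)n\})\to 0$ and $\lambda(\{x: L_n(x)>(\frac{h_1}{h_2}+\eta)n\})\to 0$ as $n\to\infty$ (here and below $L_n(x)=L_n(x,\mathcal P^1,\mathcal P^2)$; the case $\eta\ge h_1/h_2$ reduces to this one by monotonicity). Fix such an $\eta$ and a small $\epsilon>0$ to be chosen at the end in terms of $\eta,h_1,h_2$, and set
\[
  G^i_m:=\bigl\{x\in[0,1]\setminus E^i:\ e^{-(h_i+\epsilon)m}\le\lambda(I^i_m(x))\le e^{-(h_i-\epsilon)m}\bigr\}\qquad(i=1,2).
\]
Since $\mathcal P^1$ and $\mathcal P^2$ have entropies $h_1$ and $h_2$ in measure (and $\lambda(E^1)=\lambda(E^2)=0$), we have $\lambda((G^i_m)^c)\to 0$ as $m\to\infty$. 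The one combinatorial fact we use is that $G^2_m$ is a union of intervals of $P^2_m$, each of $\lambda$‑measure at least $e^{-(h_2+\epsilon)m}$, and hence is the union of at most $e^{(h_2+\epsilon)m}$ of them.

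For the lower estimate, observe that $L_n(x)<\ell$ forces $I^1_n(x)\not\subseteq I^2_\ell(x)$. If moreover $x\in G^2_\ell$, then, since $x$ lies in both $I^1_n(x)$ and the good interval $I^2_\ell(x)$ while $I^1_n(x)$ sticks out of the latter, one endpoint of $I^2_\ell(x)$ must lie in the interior of $I^1_n(x)$; as distinct intervals of $P^1_n$ have disjoint interiors, the set $\{x: L_n(x)<\ell\}\cap G^2_\ell$ is covered by at most $2e^{(h_2+\epsilon)\ell}$ intervals of $P^1_n$ (two per good interval of $P^2_\ell$). Separating those covering intervals that lie in $G^1_n$, which have measure at most $e^{-(h_1-\epsilon)n}$, from those lying in $(G^1_n)^c$, we obtain
\[
  \lambda\bigl(\{x: L_n(x)<\ell\}\bigr)\le\lambda\bigl((G^2_\ell)^c\bigr)+\lambda\bigl((G^1_n)^c\bigr)+2\,e^{(h_2+\epsilon)\ell-(h_1-\epsilon)n}.
\]
Choosing $\epsilon$ small enough that $\frac{h_1-\epsilon}{h_2+\epsilon}>\frac{h_1}{h_2}-\eta$ and putting $\ell=\ell_n:=\lceil(\frac{h_1}{h_2}-\eta)n\rceil$ (so $\ell_n\to\infty$ and the exponent is $\le -cn$ for some $c>0$ for $n$ large), all three terms tend to $0$; since $\{x:L_n(x)<(\frac{h_1}{h_2}-\eta)n\}\subseteq\{x:L_n(x)<\ell_n\}$, this is the first estimate.

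For the upper estimate we use that $\mathcal P^2$ is self‑refining: then $I^2_{j+1}(x)\subseteq I^2_j(x)$ for all $j$ (and $x\notin E^2$), so $L_n(x)\ge m$ actually implies $I^1_n(x)\subseteq I^2_m(x)$ and hence $\lambda(I^1_n(x))\le\lambda(I^2_m(x))$. On $G^1_n\cap G^2_m$ this reads $e^{-(h_1+\epsilon)n}\le e^{-(h_2-\epsilon)m}$, i.e.\ $m\le\frac{h_1+\epsilon}{h_2-\epsilon}n$. Choosing $\epsilon$ small enough that $\frac{h_1+\epsilon}{h_2-\epsilon}<\frac{h_1}{h_2}+\eta$ and putting $m=m_n:=\lceil(\frac{h_1}{h_2}+\eta)n\rceil$, we deduce $\{x:L_n(x)\ge m_n\}\cap G^1_n\cap G^2_{m_n}=\emptyset$, whence $\lambda(\{x: L_n(x)>(\frac{h_1}{h_2}+\eta)n\})\le\lambda((G^1_n)^c)+\lambda((G^2_{m_n})^c)\to 0$. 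Combining the two estimates proves the theorem.

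The step I expect to be the crux is the covering bound in the lower estimate: one cannot bound the measure of $\{x:I^1_n(x)\not\subseteq I^2_\ell(x)\}$ by (number of intervals of $P^2_\ell$)$\,\times\,$(size of an interval of $P^1_n$), since $P^2_\ell$ may contain very many very short intervals and the entropy hypothesis controls only the typical interval, not their number. The remedy, which is the covering philosophy of \cite{DaField:01}, is to localize on $G^2_\ell$, where the number of intervals — hence of boundary points — is at most $e^{(h_2+\epsilon)\ell}$, and to absorb the atypical part into $\lambda((G^2_\ell)^c)$, which is small by convergence in measure. The self‑refining hypothesis on $\mathcal P^2$ then does the complementary work in the upper estimate: it lets us replace the random depth $L_n(x)$ by the deterministic depth $m_n$, at which convergence in measure may be applied — this is precisely where the a.e.\ argument of \cite[Theorem~4]{DaField:01} needs to be modified.
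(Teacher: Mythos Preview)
Your proof is correct and follows essentially the same route as the paper. The paper does not prove this cited result directly, but its proof of the more general Theorem~\ref{thm:resultinmeasure} (via Propositions~\ref{prop:limsupiinm} and~\ref{prop:liminm}) specializes to exactly your argument: the upper estimate uses the self-refining hypothesis to turn $L_n(x)\ge m$ into $I^1_n(x)\subseteq I^2_m(x)$ and then compares measures on the $\epsilon$-good set, while the lower estimate is the same endpoint-covering argument (at most two $P^1_n$-intervals per good $P^2_\ell$-interval), with the bad set absorbed into $\lambda((G^i)^c)$. The only cosmetic difference is that the paper bounds $\sum\lambda(I^1_n)$ by passing through $\sum\lambda(I^2_{m(n)})\le 2$, whereas you count good $P^2_\ell$-intervals explicitly as $\le e^{(h_2+\epsilon)\ell}$; the two estimates are equivalent.
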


We now state our main results which are extensions of the above theorems to log-balanced sequences of partitions.

\begin{restatable*}{theorem}{mainResultae}\label{thm:resultae}
Let $\mathcal P^1$ and $\mathcal P^2$ be two a.e.\ log-balanced sequences of partitions with weight functions $f_1$ and $f_2$, respectively, with respect to some Borel probability measure $\lambda$ on $[0,1]$ such that all the following assertions hold:
\begin{enumerate}[(i)]

    \item\label{it:delta} $\sum_{n=1}^\infty e^{-\delta f_1(n)} < \infty$ for every $\delta>0$;

    \item $f_2$ is nondecreasing;
    
    \item\label{it:f2equivalentn1} $f_2(n+1)-f_2(n)=o(f_2(n))$ as $n\to \infty$.  
\end{enumerate}
Then,
\[
  \lim_{n\to\infty} \frac{f_2(L_n(x,\mathcal P^1,\mathcal P^2))}{f_1(n)}=1\quad\text{a.e.\ }(\lambda).
\]
\end{restatable*}

\begin{rmk} \label{rmk:ae} The assumptions of this  theorem   can be easily checked in natural instances.   

\begin{enumerate}
    \item Assumption  (\ref{it:delta}) above holds for any $f$ satisfying that $\lim_{n\to \infty} f(n)/\log n=\infty$.
In contrast, it does not hold if $\limsup_{n\to \infty} f(n)/\log n$ is finite.
\item If $\mathcal{P}_2$ is self-refining, then the weight function $f_2$ can be chosen to be nondecreasing (see Proposition~\ref{prop:nonmelangeepart1}).

\item A weight function $f_2$ satisfies assumption (\ref{it:f2equivalentn1}) above if and only if 
   $f_2(n+1)/f_2(n)\to 1$ as $n\to\infty$ or, equivalently,  $\sqrt[1/n]{|f_2(n)|}\to 1$ as $n\to \infty$.
\end{enumerate}

\end{rmk}

\begin{restatable*}{theorem}{mainResultim}\label{thm:resultinmeasure}
Let $\mathcal P^1$ and $\mathcal P^2$ be two sequences of partitions that are log-balanced in measure, having  respective  weight functions $f_1$ and $f_2$, with respect to some Borel probability measure $\lambda$ on $[0,1]$ such that all the following assertions hold:

\begin{enumerate}[(i)]
    \item $\mathcal P^2$ is self-refining;
    
    \item $f_2$ is nondecreasing;
    
    \item\label{it:resultainm3} $f_2(n+1)-f_2(n)=o(f_2(n))$ as $n\to\infty$.
\end{enumerate}
Then,
\[
  \lim_{n\to\infty} \frac{f_2(L_n(x;\mathcal P^1,\mathcal P^2))}{f_1(n)}=1\quad\text{in measure\ }(\lambda).
\]
\end{restatable*}

\begin{rmk}
In this case, there are no assumptions on $f_1$ apart from it being a weight function of $\mathcal P^1$ with respect to $\lambda$. Possible assumptions equivalent to assertion (\ref{it:resultainm3}) are given in Remark \ref{rmk:ae} above.
\end{rmk}

 The above theorems are proved in Section \ref{sec:convtheo}. To prove them, we deal with the corresponding superior and inferior limits separately. This gives four propositions. The conditions in their statements are weaker than those asked in Theorems~\ref{thm:resultae} and \ref{thm:resultinmeasure} but, for the propositions regarding the inferior limits,  are also more technical. The first two, Propositions  \ref{prop:limsupae} and \ref{prop:liminfae}, consider a.e.\ convergence and 
 prove Theorem~\ref{thm:resultae}.  Propositions \ref{prop:limsupiinm} and  \ref{prop:liminm} prove the result in measure, namely Theorem~\ref{thm:resultinmeasure}.

\subsubsection{Positive entropy versus Farey and Stern-Brocot}\label{ssec:F&SB}
We focus here on  conversion  results   involving  the Farey and the Stern-Brocot  sequences of partitions.

In the case $\mathcal P^2$ is the Farey sequence of partitions $\mathcal{F}$ (see Section~\ref{subsec:nfz}) and $\lambda$ is the Lebesgue measure, Theorem~\ref{thm:resultae} applies when $\mathcal P^1$ is any sequence of partitions $\mathcal H$ of positive entropy $h$ a.e.\ and yields the following:   
\begin{equation}\label{eq:H->F} \lim_{n\to\infty}\frac{\log L_n(x,\mathcal H,\mathcal F)}{n}=\frac h2\quad\text{a.e.},
\end{equation}
with respect to the Lebesgue measure. In particular, if $\mathcal H$ is the binary sequence of  partitions  $\mathcal B$ or the continued fraction sequence $\mathcal{CF}$, then $h$ is $\log 2$ or $\pi^2/(6\log 2)$, respectively. Note that this result  admits a  nice interpretation in terms of Sturmian words: because of the connection between Sturmian words and the Farey sequences (see Section~\ref{subsec:sturm}), the assertion for $\mathcal H=\mathcal B$ means that the first $n$ digits in the binary expansion of some $x\in[0,1]$ gives the prefix of length roughly $2^{n/2}$ of the characteristic Sturmian word associated with $x$. Concerning  the case  ${\mathcal H}=\mathcal{CF}$,  note also that using classical results about the statistics of continued fractions expansions, a normal law is even  proved in Theorem~\ref{teo:prob-term-cf-farey} for the Lochs index $L_n(x,\mathcal{CF},\mathcal{F})$.

Notice that assumption~\eqref{it:delta}  of Theorem~\ref{thm:resultae} is not satisfied in the case where $\mathcal P^1=\mathcal F$,   because the corresponding weight function is $f_{\mathcal{F}}(n)=2\log n$ (see Theorem ~\ref{thm:mainweights} and  Remark~ \ref{rmk:ae}). 
In Section~\ref{prop:fare-to-cf}, we show, by alternative means, that
\[
\lim_{n\to\infty}\frac{L_n(x,\mathcal{F},\mathcal{CF})}{\log n} = \frac {12\log 2}{\pi^2}\quad\text{a.e.,}
\]
with respect to the Lebesgue measure. In this case, our general results only provide convergence in measure.  The fact that we may obtain a stronger convergence is not surprising in such a case: the two partitions, continued fractions and Farey, are strongly related in their constructions (see e.g., Lemma~\ref{lem:ICFIF}),  while the proofs of the general results are based on the comparison of the growth of the measures of the  intervals.
 
Lastly, the case of the Stern-Brocot sequence of partitions and positive entropy is  an example of direct application of Theorem~\ref{thm:resultinmeasure} together with Theorem~\ref{thm:mainweights}. If $\mathcal H$ is any sequence of partitions of positive entropy $h$ in measure, then
\[ \lim_{n\to\infty} \frac{L_n(x,\mathcal{SB},\mathcal H)}{n\log L_n(x,\mathcal{SB},\mathcal{H})}=\frac{6h}{\pi^2}\quad\text{in measure} \]
and
\[ \lim_{n\to\infty}\frac{L_n(x,\mathcal H,\mathcal{SB})}{n/\log n}=\frac{\pi^2}{6h}\quad\text{in measure}, \]
with respect to the Lebesgue measure.

\section{Basic properties of sequences of partitions}
\label{sec:bal-Lochs}

This section gathers basic results which describe the  behavior of log-balanced sequences of partitions and of  Lochs' index. 
The first one, namely  Proposition \ref{prop:balanced-norm}, shows that the measures of intervals of  a log-balanced sequence of partitions tend to zero as their depths tend to infinity. This has two consequences: first, points have zero measure (Proposition \ref{prop:atom}); second,  Lochs' index is a.e. finite (Proposition \ref{prop:Lochsisfinite}).
Proposition \ref{prop:equivalent} then shows that if a sequence of  partitions is log-balanced    with respect to a measure  $\lambda$, so is it with respect to any measure equivalent to $\lambda$. This property is relevant for the study of number-theoretic fibred systems (see Section \ref{subsec:fibred}).
Proposition~\ref{prop:nonmelangeepart1} shows that any a.e.\ log-balanced sequence of partitions admits some nondecreasing weight function.
Finally, Proposition \ref{prop:polynomially} proves that a sequence of partitions with a  sub-exponential number of intervals at each depth has zero entropy. In particular, if it is log-balanced, its weight function is in $o(n)$. This is the case for the Farey and the three-distance sequences of partitions.

The following proposition shows that the norm of the partitions of a log-balanced sequence tend to zero.

\begin{proposition}\label{prop:balanced-norm}
Let $\mathcal P=\{P_n\}_{n\in\mathbb N}$ be a sequence of partitions. If $\mathcal P$ is log-balanced in measure (or even a.e.) with respect to some measure $\lambda$, then
\[ \Vert P_n\Vert_\lambda:=\sup_{I\in P_n}\lambda(I)\to 0\quad\text{as }n\to\infty.\]
\end{proposition}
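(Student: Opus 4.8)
The plan is to show that if the supremum $\|P_n\|_\lambda$ does not tend to $0$, then log-balancedness fails. Suppose for contradiction that there exist $\varepsilon>0$ and a strictly increasing sequence $n_k\to\infty$ together with intervals $J_{n_k}\in P_{n_k}$ with $\lambda(J_{n_k})\geq\varepsilon$ for all $k$. The key observation is that on each such $J_{n_k}$, for $x\in J_{n_k}\setminus E$ we have $I_{n_k}(x)=J_{n_k}$, so
\[
\frac{-\log\lambda(I_{n_k}(x))}{f(n_k)}\leq\frac{-\log\varepsilon}{f(n_k)}\to 0
\]
as $k\to\infty$, since $f(n)\to+\infty$. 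Hence on each $J_{n_k}$ the quantity whose limit is supposed to be $1$ is bounded above by something going to $0$.

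Next I would turn this into a genuine contradiction with convergence in measure (which is implied by a.e.\ convergence, so treating the ``in measure'' case suffices). Convergence in measure of $g_n(x):=\frac{-\log\lambda(I_n(x))}{f(n)}$ to the constant $1$ means that for the fixed tolerance, say, $\lambda(\{x:|g_n(x)-1|>1/2\})\to 0$ as $n\to\infty$. But for $k$ large enough that $\frac{-\log\varepsilon}{f(n_k)}<1/2$, we have $J_{n_k}\setminus E\subseteq\{x:g_{n_k}(x)<1/2\}\subseteq\{x:|g_{n_k}(x)-1|>1/2\}$, so
\[
\lambda\big(\{x:|g_{n_k}(x)-1|>1/2\}\big)\geq\lambda(J_{n_k}\setminus E)=\lambda(J_{n_k})\geq\varepsilon,
\]
using $\lambda(E)=0$ from the definition of log-balancedness. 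This contradicts convergence in measure along the subsequence $n_k$, completing the argument.

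The only mildly delicate point, and the one I would be most careful about, is the measure-theoretic bookkeeping: one must use $\lambda(E)=0$ to pass freely between $J_{n_k}$ and $J_{n_k}\setminus E$ (so that $g_{n_k}$ is defined $\lambda$-a.e.\ on $J_{n_k}$ and equals $-\log\lambda(J_{n_k})/f(n_k)$ there), and one must invoke the standard fact that a.e.\ convergence on a probability space implies convergence in measure, so that it is enough to derive a contradiction in the ``in measure'' setting. Neither step is hard, but stating them cleanly is what makes the proof rigorous. No appeal to earlier results beyond Definition~\ref{def:balanced} itself is actually needed.
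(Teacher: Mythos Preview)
Your proof is correct and follows essentially the same approach as the paper's: both argue by contradiction, extract a subsequence with intervals of measure bounded below by a positive constant, observe that on such an interval $-\log\lambda(I_{n_k}(x))/f(n_k)$ is forced to be small while it should converge to $1$, and derive a contradiction with convergence in measure. Your write-up is in fact slightly cleaner in its handling of $\lambda(E)=0$ and the reduction from the a.e.\ case to the in-measure case.
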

\begin{proof}

Suppose otherwise $\Vert P_n\Vert_\lambda\not\to 0$
as $n\to\infty$. Thus, there is some $c>0$ and some increasing sequence $\{n_k\}_{k\in\mathbb N}$ such that $\Vert P_{n_k}\Vert_\lambda\to c$ as $k\to\infty$. Let $J_{k}\in P_{n_k}$ such that $\lambda(J_{k}) > c/2$, for large enough $k$.

Let $f$ be a weight function of $\mathcal P$ in measure (or even a.e.) with respect to $\lambda$. Since $f(n_k)\to+\infty$ as $k\to\infty$, we have that $1+\log(c/2)/f(n_k)\geq 1/2$ for $k$ large enough. Hence, as for each $x\in J_{k} $, we have $J_{k} = I_{n_k}(x)$, then
\begin{align*}
\frac c2< \lambda(J_{k})&\le \lambda\left(\left\{x\in[0,1]:\, 1 +\frac{\log \lambda(I_{n_k}(x))}{f(n_k)}> 1 +\frac{\log (c/2)}{f(n_k)} \right\}\right)\\
& \le \lambda\left(\left\{x\in[0,1]:\, \left|1 +\frac{\log \lambda(I_{n_k}(x))}{f(n_k)}\right|>\frac 12\right\}\right),
\end{align*}
for large enough $k$. This shows that  $-\log(\lambda(I_n(x))/f(n)$ does not converge towards $1$ as $n\to\infty$ in measure and thus neither does a.e. This contradicts that $f$ is a weight function of $\mathcal P$.\end{proof}

The next propositions show that the property of  being log-balanced with respect to $\lambda$
implies that $\lambda$ charges no point and moreover that  the measure
of the intervals  of the  partitions $P_n$ tend to $0$ as $n\to\infty$.  Also,   equivalent measures  yield the same log-balanced sequences and weight functions.

\begin{proposition}\label{prop:atom}
Let $\mathcal P=\{P_n\}_{n\in\mathbb N}$ a sequence of partitions. If $\mathcal P$ is log-balanced in measure (or even a.e.) with respect to some measure $\lambda$, then $\lambda(\{x\})=0$ for each $x\in[0,1]$.
\end{proposition}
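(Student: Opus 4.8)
The plan is to derive this as an immediate consequence of Proposition~\ref{prop:balanced-norm}, which guarantees that $\Vert P_n\Vert_\lambda\to 0$ as $n\to\infty$. Fix $x\in[0,1]$. If $x\in E$, then $\lambda(\{x\})\le\lambda(E)=0$ by the definition of log-balancedness, so there is nothing to prove; hence assume $x\notin E$. Then for each $n\in\mathbb N$ the interval $I_n(x)\in P_n$ is well defined and $x\in I_n(x)$, so $\{x\}\subseteq I_n(x)$ and therefore $\lambda(\{x\})\le\lambda(I_n(x))\le\Vert P_n\Vert_\lambda$. Letting $n\to\infty$ and invoking Proposition~\ref{prop:balanced-norm} gives $\lambda(\{x\})=0$.

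I expect no real obstacle here: the single point of care is the dichotomy on whether $x$ lies in $E^{\mathcal P}$ or not, since $I_n(x)$ is only defined outside $E$, but in the former case the hypothesis $\lambda(E)=0$ (part of the definition of log-balanced, in measure or a.e.) disposes of it directly. Everything else is monotonicity of $\lambda$ and the already-established convergence of the norm.
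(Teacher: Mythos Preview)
Your proof is correct and follows essentially the same approach as the paper's own proof: split into the cases $x\in E$ (handled by $\lambda(E)=0$ from the definition) and $x\notin E$ (handled by $\lambda(\{x\})\leq\lambda(I_n(x))\leq\Vert P_n\Vert_\lambda\to 0$ via Proposition~\ref{prop:balanced-norm}).
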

\begin{proof} On the one hand, $\lambda(E)=0$ by  Definition~
\ref{def:balanced}. On the other hand, if $x\in[0,1]\setminus E$ and $n\in\mathbb N$, then $\lambda(\{x\})\leq\lambda(I_n(x))\leq\Vert P_n\Vert_{\lambda}$ and thus $\lambda(\{x\})=0$ by Proposition~\ref{prop:balanced-norm}.
\end{proof}

In  our setting of log-balanced sequences  of partitions,  Lochs' index takes finite values a.e. such as stated below,  where  we recall that $E^i$ stands for the set of endpoints of the partition ${\mathcal P}^i $, $i=1,2$. 

\begin{proposition}\label{prop:Lochsisfinite}
Let $\mathcal P^1$ and $\mathcal P^2$ be sequences of partitions. If $\mathcal P^2$ is log-balanced a.e.\ (or in measure) with respect to some measure $\lambda$ and $\lambda(E^1)=0$, then the Lochs index $L_n(x,\mathcal P^1,\mathcal P^2)$ is finite a.e.\ with respect to $\lambda$ for each $n\in\mathbb N$.
\end{proposition}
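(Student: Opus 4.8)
The plan is to derive this directly from Proposition~\ref{prop:balanced-norm}, which ensures that $\Vert P^2_\ell\Vert_\lambda\to 0$ as $\ell\to\infty$. First I would note that the statement makes sense $\lambda$-almost everywhere: by hypothesis $\lambda(E^1)=0$, while $\lambda(E^2)=0$ because $\mathcal P^2$ is log-balanced (Definition~\ref{def:balanced}); hence $\lambda(E^1\cup E^2)=0$ and it suffices to consider $x\in[0,1]\setminus(E^1\cup E^2)$, for which $I^1_n(x)$ and $I^2_\ell(x)$ are well defined.

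Fix $n\in\mathbb N$ and set $S_n(x):=\{\ell\in\mathbb N\mid I^1_n(x)\subseteq I^2_\ell(x)\}$, so that $L_n(x,\mathcal P^1,\mathcal P^2)=\sup S_n(x)$. Since $S_n(x)\subseteq\mathbb N$, this supremum is infinite if and only if $S_n(x)$ is infinite, that is, contains a strictly increasing sequence $\ell_k\to\infty$. In that situation, from $I^1_n(x)\subseteq I^2_{\ell_k}(x)$ we get $\lambda(I^1_n(x))\le\lambda(I^2_{\ell_k}(x))\le\Vert P^2_{\ell_k}\Vert_\lambda$ for every $k$, and letting $k\to\infty$ and invoking Proposition~\ref{prop:balanced-norm} forces $\lambda(I^1_n(x))=0$. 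Consequently, the set $\{x\in[0,1]\setminus(E^1\cup E^2):L_n(x,\mathcal P^1,\mathcal P^2)=+\infty\}$ is contained in $\bigcup\{I:I\in P^1_n,\ \lambda(I)=0\}$.

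To finish, I would use that each topological partition is at most countable, so $\bigcup\{I:I\in P^1_n,\ \lambda(I)=0\}$ is a countable union of $\lambda$-null sets and hence itself $\lambda$-null; combined with $\lambda(E^1\cup E^2)=0$, this yields that $L_n(x,\mathcal P^1,\mathcal P^2)$ is finite for $\lambda$-a.e.\ $x$, and the same argument applies verbatim under the ``in measure'' hypothesis since Proposition~\ref{prop:balanced-norm} is stated in that generality. I do not anticipate a genuine obstacle: the one point deserving care is the implication ``$S_n(x)$ infinite $\Rightarrow\lambda(I^1_n(x))=0$'', which is precisely where the vanishing of the partition norms is needed, expressing the intuition that an interval contained in intervals of arbitrarily small measure must itself be $\lambda$-null.
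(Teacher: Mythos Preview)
Your proof is correct and follows essentially the same approach as the paper: both arguments use Proposition~\ref{prop:balanced-norm} to conclude that if $L_n(x,\mathcal P^1,\mathcal P^2)=+\infty$ then $\lambda(I^1_n(x))=0$, and then observe that the union of the $\lambda$-null intervals of $P^1_n$ is itself $\lambda$-null. The only cosmetic difference is that the paper defines the null set $J=\bigcup\{I\in P^1_n:\lambda(I)=0\}$ up front and argues directly that $L_n$ is finite on its complement, whereas you phrase it as the contrapositive; you are also slightly more explicit in invoking the countability of topological partitions, which the paper leaves implicit.
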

\begin{proof} Let $n\in\mathbb N$. Since $\mathcal P^2$ is log-balanced a.e.\ (or in measure), $\lambda(E^2)=0$. Let $J$ be the union of all the intervals $I_n^1$ in $P_n^1$ such that $\lambda(I_n)=0$. Clearly, $\lambda(J)=0$. Hence, it suffices to prove that $L_n(x,\mathcal P^1,\mathcal P^2)$ is finite for every $x\in[0,1]\setminus(E^1\cup E^2\cup J)$. For that purpose, let $x\in[0,1]\setminus(E^1\cup E^2\cup J)$. By Proposition~\ref{prop:balanced-norm}, $\lambda(I_\ell^2(x))\to 0$ as $\ell\to\infty$. As $\lambda(I_n^1(x))>0$, there must be some $\ell\in\mathbb N$ such that $I_n^1(x)\nsubseteq I_\ell^2(x)$  and thus $L_n(x,\mathcal P_1,\mathcal P^2)$ is finite.
\end{proof}

\begin{proposition}\label{prop:equivalent}
Let $\mathcal P=\{P_n\}_{n\in\mathbb N}$ be a sequence of partitions that is log-balanced a.e.\ (resp.\ in measure) with respect to some measure $\lambda$.
If $\mu$ is another Borel probability measure equivalent to $\lambda$ (i.e., both $d\mu/d\lambda$ and $d\lambda/d\mu$ are bounded), then $\mathcal P$ is also log-balanced a.e.\ (resp.\ in measure) with respect to $\mu$ with the same weight function.
\end{proposition}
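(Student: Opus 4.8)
The plan is to turn the hypothesis into a two-sided multiplicative comparison of $\lambda$ and $\mu$ on Borel sets, and then substitute this comparison into the defining limit of Definition~\ref{def:balanced}, using that $f(n)\to+\infty$ to absorb the resulting additive constants.

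\emph{Step 1: extract constants.} Since $\mu$ and $\lambda$ are equivalent with $d\mu/d\lambda$ and $d\lambda/d\mu$ bounded, there exist constants $0<c\le C<\infty$ with
\[
c\,\lambda(A)\ \le\ \mu(A)\ \le\ C\,\lambda(A)\qquad\text{for every Borel set }A\subseteq[0,1];
\]
indeed, take $C$ an essential upper bound for $d\mu/d\lambda$ and $1/c$ an essential upper bound for $d\lambda/d\mu$, and integrate the respective Radon--Nikodym derivative over $A$. In particular $\mu(E)\le C\,\lambda(E)=0$, which is the first condition required of $\mu$ in Definition~\ref{def:balanced}. Moreover, $Z:=\{x\in[0,1]\setminus E:\ \lambda(I_n(x))=0\text{ for some }n\in\mathbb N\}$ is a countable union of $\lambda$-null intervals, so $\lambda(Z)=0$ and hence $\mu(Z)=0$.

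\emph{Step 2: the pointwise estimate.} Fix $x\in[0,1]\setminus(E\cup Z)$. For every $n$ we have $\lambda(I_n(x))>0$, hence $\mu(I_n(x))>0$, and applying $-\log(\cdot)$ to $c\,\lambda(I_n(x))\le\mu(I_n(x))\le C\,\lambda(I_n(x))$ and dividing by $f(n)$ (which is positive for $n$ large, since $f(n)\to+\infty$) gives
\[
\left|\frac{-\log\mu(I_n(x))}{f(n)}-\frac{-\log\lambda(I_n(x))}{f(n)}\right|\ \le\ \frac{\max(|\log c|,|\log C|)}{f(n)}\ =:\ \varepsilon_n,
\]
where $\varepsilon_n\to 0$ and, crucially, $\varepsilon_n$ does not depend on $x$.

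\emph{Step 3: conclude.} In the a.e.\ case, the set where $-\log\lambda(I_n(x))/f(n)\to 1$ fails is $\lambda$-null, hence $\mu$-null; off that set and off $E\cup Z$, the estimate of Step~2 together with $\varepsilon_n\to 0$ forces $-\log\mu(I_n(x))/f(n)\to 1$, so this holds $\mu$-a.e.\ and $f$ is a weight function of $\mathcal P$ a.e.\ with respect to $\mu$. In the in-measure case, fix $\varepsilon>0$ and choose $N$ with $\varepsilon_n<\varepsilon/2$ for all $n\ge N$; then, for $n\ge N$, up to the $\mu$-null set $E\cup Z$,
\[
\left\{x:\ \left|\frac{-\log\mu(I_n(x))}{f(n)}-1\right|>\varepsilon\right\}\ \subseteq\ \left\{x:\ \left|\frac{-\log\lambda(I_n(x))}{f(n)}-1\right|>\frac\varepsilon 2\right\},
\]
so the $\mu$-measure of the left-hand set is at most $C$ times the $\lambda$-measure of the right-hand set, which tends to $0$ by the in-measure log-balancedness of $\mathcal P$ with respect to $\lambda$. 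Hence $-\log\mu(I_n(x))/f(n)\to 1$ in $\mu$-measure, and $f$ is a weight function of $\mathcal P$ in measure with respect to $\mu$. There is no serious obstacle here; the only points needing a little care are the passage from ``$d\mu/d\lambda$ and $d\lambda/d\mu$ bounded'' to the two-sided bound $c\,\lambda\le\mu\le C\,\lambda$ in Step~1, and the observation in Step~3 that convergence in $\lambda$-measure transfers to convergence in $\mu$-measure because $\mu\le C\,\lambda$, after discarding the null set where some $I_n(x)$ is $\lambda$-degenerate.
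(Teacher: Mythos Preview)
Your proof is correct and follows essentially the same idea as the paper's: the boundedness of both Radon--Nikodym derivatives yields a two-sided multiplicative comparison $c\,\lambda\le\mu\le C\,\lambda$, which after taking logarithms becomes an additive $O(1)$ error that is absorbed by $f(n)\to+\infty$. Your version is in fact more explicit and self-contained than the paper's, which only sketches the argument by citing \cite[Lemma~2.4]{Dajani} and phrasing the conclusion as $\log\lambda(I_n(x))/\log\mu(I_n(x))\to 1$ a.e.; you also spell out the in-measure case (via the set inclusion and $\mu\le C\,\lambda$), which the paper's proof does not address explicitly.
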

\begin{proof} By  Proposition~\ref{prop:balanced-norm},
$\mu(I_n(x))\to 0$ as $n\to\infty$ for every $x\in[0,1]\setminus E$. Now the proof can be completed along the same lines as in the proof of \cite[Lemma 2.4]{Dajani} in order to prove that
$\lim_{n\to\infty}\frac{\log  \lambda(I_n(x))}{\log \mu(I_n(x))}=1$
a.e.
\end{proof}

For the validity of Theorem~\ref{thm:resultae}, we assume that the weight $f_2$ of $\mathcal P^2$ is nondecreasing. %
The following lemma shows that this assumption does not per se rules out any particular a.e.\ log-balanced self-refining sequence $\mathcal P^2$.

\begin{proposition}
\label{prop:nonmelangeepart1}
Every self-refining a.e.\ log-balanced sequence of partitions $\mathcal P$ with respect to some measure $\lambda$ admits some nondecreasing a.e.\ weight function with respect to $\lambda$.
\end{proposition}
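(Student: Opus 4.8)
The plan is to construct a nondecreasing weight function $g$ from a given weight function $f$ of $\mathcal P$ by taking a suitable running maximum, and then to show that the self-refining hypothesis forces $g$ and $f$ to be asymptotically equivalent, so that $g$ is again a weight function. First I would fix a weight function $f$ of $\mathcal P$ a.e.\ with respect to $\lambda$, and define
\[
 g(n):=\max_{0\le k\le n} f(k).
\]
Then $g$ is clearly nondecreasing, and since $f(n)\to+\infty$ we also have $g(n)\to+\infty$. It remains to check that $g(n)/f(n)\to 1$; once that is established, the identity
\[
 \frac{-\log\lambda(I_n(x))}{g(n)}=\frac{-\log\lambda(I_n(x))}{f(n)}\cdot\frac{f(n)}{g(n)}\to 1\quad\text{a.e.}
\]
gives that $g$ is a nondecreasing weight function, as desired. (We may harmlessly discard the finitely many $n$ with $f(n)\le 0$, or shift $f$ by an additive constant, so that ratios make sense; this does not affect being a weight function.)

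The crux is therefore showing $f(n)/g(n)\to 1$, equivalently $\max_{k\le n}f(k)\le (1+o(1))f(n)$, i.e.\ that $f$ cannot have ``large bumps'' in its past relative to its current value. This is where self-refining enters. Because $\mathcal P$ is self-refining, for each $x\notin E$ the intervals $I_n(x)$ are nested and decreasing, so $\lambda(I_n(x))$ is nonincreasing in $n$; hence $-\log\lambda(I_n(x))$ is nondecreasing in $n$ (taking values in $[0,+\infty]$). Fix a small $\varepsilon>0$. By Egorov's theorem (using $\lambda(E)=0$ and the a.e.\ convergence in Definition~\ref{def:balanced}), there is a set $A$ with $\lambda(A)>1/2$ and an $N$ such that for all $n\ge N$ and all $x\in A$,
\[
 (1-\varepsilon)f(n)\le -\log\lambda(I_n(x))\le (1+\varepsilon)f(n).
\]
Now take any $k,n$ with $N\le k\le n$ and any $x\in A$: monotonicity of $-\log\lambda(I_\cdot(x))$ gives $-\log\lambda(I_k(x))\le -\log\lambda(I_n(x))$, so $(1-\varepsilon)f(k)\le (1+\varepsilon)f(n)$. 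Taking the maximum over $N\le k\le n$ yields $\max_{N\le k\le n}f(k)\le \frac{1+\varepsilon}{1-\varepsilon}f(n)$. Since the finitely many values $f(0),\dots,f(N-1)$ are bounded while $f(n)\to+\infty$, they are eventually dominated, so $g(n)=\max_{k\le n}f(k)\le \frac{1+\varepsilon}{1-\varepsilon}f(n)$ for all large $n$. As $g(n)\ge f(n)$ always (so the liminf of the ratio is at least $1$), and $\varepsilon>0$ was arbitrary, we conclude $g(n)/f(n)\to 1$.

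The main obstacle is the step just described: a priori $f$ is only constrained through an a.e.\ statement about $-\log\lambda(I_n(x))$, and without extra structure nothing prevents $f$ from oscillating wildly, in which case its running maximum would not be equivalent to $f$. The self-refining property is exactly what rescues this, via the monotonicity of $n\mapsto-\log\lambda(I_n(x))$, which must be combined with a uniformization (Egorov) of the a.e.\ convergence over a set of positive measure so that the pointwise monotonicity can be transferred to a genuine inequality between $f(k)$ and $f(n)$. One minor technical point to handle carefully is ensuring $\lambda(I_n(x))>0$ for $x$ in the relevant set (otherwise $-\log\lambda(I_n(x))=+\infty$ and the ratio with $f(n)$ is not finite); but the set of $x$ lying in some null interval of some $P_n$ is itself $\lambda$-null, so it can be excised along with $E$, and the argument proceeds on its complement intersected with the Egorov set $A$.
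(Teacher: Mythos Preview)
Your proof is correct, but it is considerably more elaborate than the paper's. The paper's argument is a one-liner: pick any single point $x\in[0,1]\setminus E$ at which the a.e.\ convergence $-\log\lambda(I_n(x))/f(n)\to 1$ holds, and set $g(n):=-\log\lambda(I_n(x))$. Then $g(n)/f(n)\to 1$ by the choice of $x$, so $g$ is again a weight function, and $g$ is nondecreasing directly because self-refining makes $\lambda(I_n(x))$ nonincreasing in $n$.

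Your route---defining $g$ as the running maximum of $f$ and then proving $g\sim f$ via Egorov and monotonicity of $n\mapsto -\log\lambda(I_n(x))$---reaches the same conclusion, and the key mechanism (monotonicity from self-refining) is the same. But notice that your Egorov step is really only used to produce \emph{one} point $x$ with uniform-in-$n$ bounds $(1-\varepsilon)f(n)\le -\log\lambda(I_n(x))\le(1+\varepsilon)f(n)$; any single $x$ in the a.e.\ convergence set already gives this for each $\varepsilon$ with its own $N_\varepsilon$, so Egorov is not needed. And once you have such an $x$, taking $g(n)=-\log\lambda(I_n(x))$ directly is simpler than routing through the running maximum. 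The advantage of your construction is that $g$ depends only on $f$ (not on a choice of $x$), which is a mild aesthetic gain; the paper's version is shorter and avoids the technical caveats about $f(n)\le 0$ and null intervals that you had to handle.
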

\begin{proof} Let $f$ be an a.e.\ weight function of $\mathcal P$. Let $x\in[0,1]\setminus E$ for which $-\log \lambda(I_n(x))/f(n)\to 1$ as $n\to\infty$. Let $g(n)=-\log\lambda(I_n(x))$. By construction, $g(n)/f(n)\to 1$ as $n\to\infty$. Hence, as $f$ is an a.e.\ weight function of $\mathcal P$, so is $g$. Moreover, since $\mathcal P$ is self-refining, the definition of $g$ implies that $g$ is nondecreasing.
\end{proof}

The next result shows that a sequence of partitions with polynomially many intervals at each depth has zero entropy a.e.

\begin{proposition}\label{prop:polynomially}
Let $\mathcal P=\{P_n\}_{n\in\mathbb N}$ be a sequence of partitions and let $\lambda$ be a  Borel probability measure on $[0,1]$. Suppose that the number $\#(P_n)$ of %
intervals at depth $n$ is such that $\log\#(P_n)=o(n)$ as $n\to\infty$.  Then, $\mathcal P$ has zero entropy a.e.
\end{proposition}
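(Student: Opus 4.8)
The plan is to run a direct first Borel--Cantelli argument on the ``small'' intervals of each $P_n$. Recall we must show $\lim_{n\to\infty}\bigl(-\log\lambda(I_n(x))\bigr)/n=0$ for $\lambda$-a.e.\ $x$ (which, together with $\lambda(E)=0$, is precisely the statement that $\mathcal P$ has zero entropy a.e.). Since each interval of $P_n$ has $\lambda$-measure at most $\lambda([0,1])=1$, we always have $-\log\lambda(I_n(x))\geq 0$; hence it suffices to prove that for every fixed $\varepsilon>0$,
\[ \limsup_{n\to\infty}\frac{-\log\lambda(I_n(x))}{n}\leq\varepsilon\qquad\text{for }\lambda\text{-a.e.\ }x, \]
and then to intersect the resulting full-measure sets over $\varepsilon=1/k$, $k\in\N$.

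Fix $\varepsilon>0$ and, for each $n$, set $A_n:=\bigcup\{I\in P_n:\lambda(I)<e^{-\varepsilon n}\}$. Since $\log\#(P_n)<\infty$, this is a finite union of pairwise disjoint open intervals of $P_n$, hence open and Borel; disjointness gives the crude bound $\lambda(A_n)\leq\#(P_n)\,e^{-\varepsilon n}=\exp\bigl(\log\#(P_n)-\varepsilon n\bigr)$. The hypothesis $\log\#(P_n)=o(n)$ furnishes $n_0$ with $\log\#(P_n)\leq\tfrac{\varepsilon}{2}\,n$ for all $n\geq n_0$, whence $\lambda(A_n)\leq e^{-\varepsilon n/2}$ for $n\geq n_0$, so $\sum_n\lambda(A_n)<\infty$. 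By Borel--Cantelli, $\lambda\bigl(\limsup_n A_n\bigr)=0$, i.e.\ for $\lambda$-a.e.\ $x\in[0,1]\setminus E$ there is $N(x)$ with $x\notin A_n$ for all $n\geq N(x)$. For such $x$ and $n$, the interval $I_n(x)$ is not one of the small ones, so $\lambda(I_n(x))\geq e^{-\varepsilon n}$, that is $-\log\lambda(I_n(x))\leq\varepsilon n$. This gives the displayed $\limsup$ bound; letting $\varepsilon\to 0$ along $1/k$ completes the proof.

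I do not expect a genuine obstacle: the argument is elementary, and the only care needed is bookkeeping --- checking that $A_n$ is a genuine union of intervals of $P_n$ (so that the disjointness bound applies and $I_n(x)$ is unambiguous off $E$) and handling the $o(n)$ asymptotics cleanly. Finally, for the ``in particular'' remark quoted before the statement, one fixes the full-measure set produced above; for a log-balanced $\mathcal P$ with weight function $f$ one then has, along that set, $f(n)/n=\bigl(f(n)/(-\log\lambda(I_n(x)))\bigr)\cdot\bigl((-\log\lambda(I_n(x)))/n\bigr)\to 1\cdot 0=0$, so $f=o(n)$; this applies in particular to the Farey and three-distance sequences of partitions.
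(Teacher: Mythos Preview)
Your proof is correct and is essentially identical to the paper's: both define the set of ``small'' intervals at depth $n$ (those with $\lambda$-measure below $e^{-\varepsilon n}$), bound its total measure by $\#(P_n)\,e^{-\varepsilon n}$, establish summability, and apply Borel--Cantelli. The only cosmetic difference is that the paper phrases summability via the root test ($\limsup_n\#(P_n)^{1/n}=1$), whereas you use the definition of $o(n)$ directly to obtain $\lambda(A_n)\leq e^{-\varepsilon n/2}$ for large $n$; these are of course equivalent.
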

\begin{proof} Notice that  $\log\#(P_n)=o(n)$ implies $\limsup_{n\to\infty}\#(P_n)^{1/n}=1$. Thus, by the root test, $\sum_{n=0}^\infty\#(P_n) e^{-\epsilon n}<\infty$ for each $\epsilon>0$.

Let $h_n(x):=-\log\lambda(I_n(x))/n$. And, for each $\epsilon>0$ and $n\in\mathbb N$, let
\[A_{n,\epsilon}=\{x\in [0,1]\setminus E: h_n(x)\geq\epsilon\}=\{x\in [0,1]\setminus E: \lambda(I_n(x))\le e^{-\epsilon n} \}\ .
\]
Therefore,
\[ \lambda(A_{n,\epsilon})\le \sum_{I_n\in\mathcal P_n:\,I_n\cap A_{n,\epsilon}\neq \emptyset} \lambda(I_n)\le\#(P_n) e^{-\epsilon n}\ .
\]
Since $\sum_{n=0}^\infty\#(P_n) e^{-\epsilon n}<\infty$, the Borel--Cantelli Lemma implies
\[
\lambda(\{x:\,x\in A_{n,\epsilon}\ \text{i.o.}\})=0.
\]
Thus, $x\notin A_{n,\epsilon}$ for $n$ large enough a.e., that is, 
$\lim_{n\to \infty}h_n(x)=0$ a.e.\end{proof}

\section{Main examples}\label{sec:balanced}

In this section, we introduce our main examples of sequences of partitions and discuss their log-balancedness. Throughout this section we prove Theorem~\ref{thm:mainweights}. %

This section is organized as follows. In Section \ref{subsec:anyf}, we show that one can give an explicit  log-balanced  sequence of partitions with weight $f$ for any given function $f$ that tends to infinity. In Section~\ref{subsec:num}, we make explicit the connection between sequences of partitions and numeration systems. In Section \ref{subsec:fibred}, we build log-balanced sequences of partitions from fibred systems. In Section~\ref{subsec:fp}, we consider fibred numeration systems with positive entropy, which includes the sequences of partitions corresponding to the binary numeration system, the beta-numeration, and continued fraction expansions. In Section~\ref{subsec:fz}, we introduce the Stern-Brocot sequence of partitions, which is induced by a fibred system of zero entropy. In Section~\ref{subsec:nfz}, we present some examples of zero-entropy sequences of partitions not induced by fibred systems, namely the Farey sequence of partitions. Lastly, in Section \ref{subsec:3D}, we discuss a sequence of partitions associated with the three distance theorem.

\subsection{A realization  result} \label{subsec:anyf}

Next proposition shows that for any $f:\mathbb{N}\to \mathbb{R}_{\ge 0}$ so that $f(n)\to\infty$ as $n\to\infty$ there exists a sequence of partitions with $f$ as an a.e.\ weight function with respect to the Lebesgue measure. 

\begin{proposition}
Let $f:\mathbb{N}\to \mathbb{R}_{\ge 0}$, with $\lim_{n\to \infty} f(n)= \infty$.  Then, there exists an a.e.\ log-balanced sequence  of partitions $\mathcal{P}=\{P_n\}_{n\in\mathbb N}$ with weight function $f$, with respect to the Lebesgue measure. Moreover,  if $f$ is nondecreasing, then $\mathcal{P}$ is self-refining. Furthermore,  if $n\mapsto \lfloor e^{f(n)}\rfloor$ is (strictly) increasing, then $\mathcal P$ is strictly self-refining.
\end{proposition}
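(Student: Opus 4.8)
The plan is to realize the prescribed $f$ by a single explicit construction based on repeatedly bisecting a longest interval, and to observe that this construction is automatically (strictly) self-refining as soon as $n\mapsto k_n:=\lfloor e^{f(n)}\rfloor$ is (strictly) increasing. Note first that $k_n\ge 1$ for all $n$, since $f\ge 0$.

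I would begin by constructing an auxiliary refining chain $\{Q_t\}_{t\ge 1}$ of topological partitions of $[0,1]$: put $Q_1=\{(0,1)\}$, and for $t\ge1$ let $Q_{t+1}$ be obtained from $Q_t$ by replacing a longest interval of $Q_t$ -- the leftmost such interval, say, to make the choice canonical -- by its two halves of equal length. Then $Q_t$ is a topological partition with exactly $t$ intervals, all of dyadic length, and $Q_{t+1}$ refines $Q_t$ for every $t$. The sequence of partitions realizing $f$ is then $P_n:=Q_{k_n}$ with $k_n=\lfloor e^{f(n)}\rfloor$, so that $\mathcal P=\{P_n\}_{n\in\mathbb N}$ has $E^{\mathcal P}$ a countable (hence Lebesgue-null) set of dyadic rationals.

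The crux, and the only place where a genuine argument is needed, is a uniform bound on the interval lengths of $Q_t$. I would prove by induction on $t$ that the multiset of lengths of the intervals of $Q_t$ is, regardless of how ties were broken, the following: writing $t=2^a+r$ with $0\le r<2^a$, it consists of $2r$ intervals of length $2^{-(a+1)}$ together with $2^a-r$ intervals of length $2^{-a}$. The inductive step just bisects one length-$2^{-a}$ interval and updates $(a,r)$ to $(a,r+1)$ if $r+1<2^a$, or to $(a+1,0)$ if $r+1=2^a$. As an immediate corollary, every interval of $Q_t$ has length in $[\tfrac{1}{2t},\tfrac{2}{t}]$.

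From this the statement follows by routine estimates. For $x\notin E^{\mathcal P}$ the length bound gives $\tfrac{1}{2k_n}\le|I_n(x)|\le\tfrac{2}{k_n}$, hence $-\log|I_n(x)|=\log k_n+O(1)$; and since $f(n)\to\infty$, the inequalities $e^{f(n)}-1<k_n\le e^{f(n)}$ give $\log k_n=f(n)+o(1)$. Therefore $-\log|I_n(x)|/f(n)\to 1$ for every $x\notin E^{\mathcal P}$, which establishes a.e.\ log-balancedness with weight $f$ with respect to the Lebesgue measure. If $f$ is nondecreasing, then $k_n$ is nondecreasing, so $P_{n+1}=Q_{k_{n+1}}$ refines $P_n=Q_{k_n}$ and $\mathcal P$ is self-refining; if, moreover, $n\mapsto k_n$ is strictly increasing, then $Q_{k_{n+1}}$ has strictly more intervals than $Q_{k_n}$, so at least one interval of $P_n$ is a union of two or more intervals of $P_{n+1}$, i.e.\ $\mathcal P$ is strictly self-refining.
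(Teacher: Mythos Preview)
Your proof is correct and follows essentially the same approach as the paper. In fact, your recursive chain $\{Q_t\}$ (bisect the leftmost longest interval) produces exactly the same partitions as the paper's explicit endpoint description: for $t=2^a+r$ with $0\le r<2^a$, both constructions yield the binary partition of depth $a$ with its leftmost $r$ intervals further bisected, and both arrive at the same length bound $\tfrac{1}{2t}\le|I|\le\tfrac{2}{t}$ (equivalently $\tfrac{1}{2e^{f(n)}}\le|I_n(x)|<\tfrac{2}{e^{f(n)}}$), from which the weight-function property and the (strict) self-refinement follow identically.
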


\begin{proof}
We build $\mathcal P$ by giving for each $n\in\mathbb N$ the set endpoints of the intervals of $P_n$. For that purpose, we take as a reference the sets $E_k^{\mathcal B}=\{a/2^k:\,a\in\mathbb N\text{ and }0\leq a\leq 2^k\}$ of endpoints of the binary partition at depth $k$.

Fix $n\in \mathbb{N}$ and consider the only integer $k\ge 0$ so that $2^k\le e^{f(n)}<2^{k+1}$.
These relations
allows one to  perform a  change of scale  %
between indices  of depths $k$ and $n$. Let $P_n$ be the topological partition of $[0,1]$ whose endpoint set is  
  \[ E_n = E_k^{\mathcal{B}}\cup \{a/2^{k+1}:\,a=2i+1\text{ and }0\le i \le \lfloor e^{f(n)}\rfloor-2^k-1, i\in \mathbb{N} \}.
 \]
Notice that $E_k^{\mathcal{B}} \subseteq E_n \subsetneq E_{k+1}^{\mathcal{B}}$ because $E_n$ is $E_{k}^{\mathcal{B}}$ together with the leftmost $\lfloor e^{f(n)}\rfloor-2^k$ endpoints of $E_{k+1}^{\mathcal{B}}$. Hence, if 
$n\mapsto \lfloor e^{f(n)}\rfloor$ is nondecreasing,  then the sequence
of partitions $\{P_n\}$ is  self-refining, and if $n\mapsto \lfloor e^{f(n)}\rfloor$ is  increasing, then $P_n\subsetneq P_{n+1}$. 

 Let $x\notin E^{\mathcal{B}}:=\bigcup_{k=0}^\infty E^{\mathcal B}_k$. Then, by construction, $2^{-k-1}\le |I_n(x)|\le 2^{-k}$, that is, 
 \[ \frac 1{2e^{f(n)}}\le\vert I_n(x)\vert<\frac 2{e^{f(n)}}.\]
The fact that $f(n)\to \infty$ as $n\to \infty$ implies that
 \[
  \lim_{n\to \infty}\frac{-\log |I_n(x)|}{f(n)}=1 \quad  a.e. \qedhere\]

 \end{proof}

\begin{rmk} Notice that, in the above proposition, if $f(n)/n\to 0$ (resp.\ $f(n)/n\to\infty$) as $n\to\infty$, then the sequence of partitions $\mathcal P$ has $0$ (resp.\ $\infty$) a.e.\ entropy with respect to the Lebesgue measure.
\end{rmk}

\subsection{From  sequences of partitions to numeration systems}\label{subsec:num}
 
By numeration system given by partitions, we mean  a system of representation of numbers using sequences  of labels, and even of digits in most of the  cases,   provided  by a  sequence of  partitions $\mathcal{P}= \{P_n\}_{n \in \mathbb N}$  endowed with labels.
This involves in particular the most   classical numerations for  real numbers, such as   $b$-ary representations, as well
as  representations based on continued fractions.

A  {\em numeration system    by  a log-balanced sequence of  partitions} over the
(at most countable) alphabet $\mathcal{A}$,  denoted as   ${\mathcal N}=({\mathcal P}, \rho)$, is 
  defined by 
a  strictly  self-refining sequence of partitions
$\mathcal{P} = \{P_n\}_{n\in\mathbb N}$
with  $ P_0 = \{(0,1)\}$, assumed to be log-balanced with respect to the Lebesgue measure, 
together with a sequence of labelling functions $\rho=\{\rho_n\}_{n \geq 1}$,  with  $\rho_n: P_n\to\mathcal A$ for all $n$, that  satisfies the following condition.

\begin{enumerate}[(H)]

\item For each $I\in\mathcal \mathcal{P}_n$ and each distinct $J,J'\in P_{n+1}$ such that $J,J'\subset I$, $\rho_{n+1}(J)\neq\rho_{n+1}(J')$. In other words, the restriction  of the labelling  map $\rho_{n+1}$ is injective  when restricted to the set $P_{n+1}|_I$ of all intervals $J\in P_{n+1}$ contained in
$I\in P_{n}$.
\end{enumerate}

In particular, by Proposition \ref{prop:balanced-norm}, one has $\|P_n\| = \sup \{|I|:\,I\in P_n\}$ tends to $0$ as $n\to\infty$. The sequence of  partitions  is thus {\em generating}
 (i.e., for a.e. $x,y \in [0,1]
 $, if $x\neq y$,  then there exist $n\in\mathbb N$, $P,Q\in P_n$, $P\neq Q$, such that  $x \in P$ and
 $y \in Q$.)

In full generality,  according e.g. to the formalism developed in \cite{BBLT:06}, 
a  {\em numeration system}   is a triple $(X,\mathcal{A},\varphi)$, where $X$ is
a set (the set of elements to be represented), $\mathcal{A}$ a finite or countable set (the alphabet of the representation), and $\varphi$ an injective map
$\varphi\colon X\rightarrow \mathcal{A}^{{\mathbb N }}$.  Accordingly, given a   sequence  of  partitions ${\mathcal P}$ that satisfies the above conditions,  we want to define a map that associates with every $x$ in $[0,1]$ the sequence of labels of intervals to which $x$ belongs. 
Such a map is well-defined on points that do not belong to  the set $E$ of endpoints of the intervals of the partitions.    In any case,  the set $E$  is a  countable set\footnote{Of course, it would be possible to define (two) finite expansions for the elements in $E$, but we choose to concentrate just on $[0,1]\setminus E$, the set corresponding to infinite expansions.}. Hence here  $X$ is equal to $[0,1]\setminus E$ and this gives a  coding  map $\varphi$   that  associates with an element of $[0,1]\setminus {E}$ the  sequence of labels of the intervals to which it belongs:
\[\varphi: [0,1] \setminus E\rightarrow \mathcal{A}^{\mathbb{N}}, \ 
 x \mapsto \rho_1(I_1(x))\rho_2(I_2(x))\ldots
\text{ where } x\in I_n(x) \text{  for all } n.\]

Let us check that the map $\varphi$ is  injective. Let $x,y\in X$ and suppose $\varphi(x)=\varphi(y)$. Observe that  $x$ and $y$ both  belong to
$[0,1]$.  Let us  prove by induction that $x$ and $y$ belong to the same interval $I_n\in P_n$ for all $n$. Assume that  the induction property holds for some positive  $n$.   The points  $x,y$ thus  belong to the same interval $I_{n}\in P_{n}$,   and by Condition (H),  $x,y$ belong to   the same $I_{n+1}\in P_{n+1}$, which ends the induction proof. Thus, $x,y\in\bigcap_{n\geq 1}I_n$. Since $\{I_n\}$ is a nested sequence of intervals whose length tends to $0$, $\bigcap_{n\geq 1}I_n$ has only one element. Hence, $x=y$.
We thus get a numeration system as defined  above.
In other words, by assumption (H), we have    for any $x \in [0,1]\setminus E$ that
$\{x\}%
=\bigcap_{n\ge 1}\overline{ I_n(x)}$
and the sequence of labels $ \{\rho_n(I_n(x))\}_{n\geq 1}$  encodes $x$ univocally. 
By definition, each interval of depth $n$ in the partition $P_n$  gathers all numbers in $[0,1]\setminus E$ whose representations, as sequences of labels, coincide until depth $n$.

\begin{rmk} Examples of  labelled  sequences  of partitions are given below.
When each interval in $P_n$ is divided into proper subintervals in $P_{n+1}$,  the labels $\rho_{n+1}$  usually produce digits that will be used to provide suitable expansions  of the real numbers of the unit interval. This is for instance the case of the binary numeration (see Section \ref{subsec:fp}). For other sequences of partitions, such as the Farey one in Section \ref{subsec:nfz}, the labelling $\rho_n$ might seem to be less relevant at first view. However, we will see that it is connected to the coding of dynamical systems in the context of characteristic Sturmian words, as developed in Section~\ref{subsec:sturm}.
\end{rmk} 

\subsection{Fibred  systems}\label{subsec:fibred}

Often, a sequence of partitions, which allows for the definition of a numeration system as described in Section \ref{subsec:num}, can be defined in dynamical terms. A particularly relevant  framework in this setting  is the one of fibred systems as introduced in  \cite{Schweiger:95}, and in   \cite{Dajani,DaField:01}   for the notion of a  number-theoretic fibred system.

In this paper, we say that the pair $([0,1], T)$ is a   {\em fibred system} if the 
transformation $T\colon [0,1]\to [0,1]$ is such that  there exist a finite or countable
set ${\mathcal A}$ and a topological  partition  ${ P}= \{I_a \}_{{a\in {\mathcal A}}}$ of $[0,1]$ for which the
restriction $T_a$ of $T$ to $I_a$ is injective and  continuous,  for each $a\in {\mathcal A}$.

We associate with a fibred system $([0,1], T)$ a sequence of partitions in the usual dynamical way. Let $T^{-1}{P}$ denote the partition $\{T^{-1} I_a\}_{a\in\mathcal A}$. We define, for any $n\ge 1$, the partition ${P}_n$  as  the join  partition  $\bigvee_{i=0}^{n-1}  T^{-j} { P}$
made up of the sets of the form 
$\bigcap _{j=0} ^{n-1} T^{-j} I_{a_j}$, for all choices of $a_0,\ldots,a_{n-1}\in\mathcal A$, where each $I_{a_j}\in  {{P}}$. For $n=0$,  we define $P_0=\{(0,1)\}$. %
Now, we define a sequence of labelling maps $\{\rho_n\}_{n\ge 1}$ as follows:
\[  \rho_n \left(\bigcap_{j=0}^{n-1}  T^{-j} I_{a_j}\right):=a_{n-1} \quad\text{for all choices of }a_0,\ldots,a_{n-1}\in\mathcal A. \]
Note that  $\rho_n= \rho_1 \circ T^{n-1}$ for each $n$.
If, moreover, ${\mathcal P}=\{P_n\}_{n\in\mathbb N}$ is log-balanced a.e.\ or in measure with respect to the Lebesgue measure and it is strictly self-refining, then $\mathcal P$ satisfies the  conditions given in Section \ref{subsec:num} and thus provides a numeration system  $({\mathcal P}, \rho)$. This numeration system associated with the map $T$ together with the topological partition $P$ (whose elements are indexed by the alphabet $\mathcal A$) is said to be, here, a  {\em fibred numeration system}.
The injectivity of the restriction $T_a$ of $T$ to $I_a$ is consistent with the injectivity requirement of condition (H).

Fibred systems as above defined are particular cases of the more general fibred systems developed in  \cite{BBLT:06}. For a fibred system $([0,1],T)$ as above,  the corresponding set $X$ and the corresponding partition $\{X_a\}_{a\in \mathcal{A}}$ of $X$ in the definition of fibred system given in Definition 2.3 of \cite{BBLT:06} are given by:
\[ X=X(T,P):=\bigcap_{j=0}^\infty T^{-j}\bigcup_{a\in \mathcal{A}} I_a\quad\text{and}\quad X_a=X\cap I_a\ \text{for each }a\in\mathcal A. \] Clearly, $T(X)\subseteq X$ and the restriction $T_a$ of $T$ on $X_a$ is injective. In our context, the set $X$ equals the set $[0,1]\setminus E$ where $E$ is the set of endpoints of $\mathcal P$.

A fibred system  $([0,1],T)$ admits \emph{an invariant and ergodic Borel probability measure} $\lambda$ if, for  every Borel set $B$, the following assertions hold: (i) $\lambda (T^{-1}(B))=\lambda(B)$ and (ii) $T^{-1}(B) = B$ implies that $B$ is a set of measure $0$ or $1$.
We then denote it as $([0,1],T,\lambda)$.
According to \cite[Definition 2.2]{Dajani}, a fibred system
is called a {\em number-theoretic fibred system}
if $T$ has an  ergodic and invariant probability measure which is 
equivalent to the Lebesgue measure.
A fibred system with an invariant and ergodic probability measure
$\lambda$ has then  a well-defined entropy (see {\cite[Chap.\ 8]{pollicott-yuri-1998}}).
The following theorem is a consequence of the Shannon--McMillan--Breiman Theorem (see {\cite[p. 134]{pollicott-yuri-1998}}).

\begin{theorem}\label{teo:ShannonMcMillanBreiman}
    Let  $ ([0,1],T,\lambda)$ be a fibred number-theoretic system (endowed with an ergodic invariant measure $\lambda$  of the  probability space $[0,1]$).
    Let $\mathcal{P}=\{P_n\}_{n \in \mathbb{N}}$ be the  sequence of partitions associated with  this fibred system.
   If $$H_1(\mathcal{P},\lambda)=-\sum _{I\in P_1} \lambda(I)  \log  
\lambda( I )<\infty,$$  then for $\lambda$-almost every $x$, the following limits exist and satisfy
    \[
    h = \lim_{n\to \infty} - \frac{\log \lambda (I_{n}(x)) }{n}
    =-\lim_{n\to \infty}\frac{1}{n}\sum _{I\in P_n} \lambda(I)  \log \lambda(I)  \,.
    \]
    In particular, the sequence of partitions ${\mathcal P}$
    is log-balanced a.e.\ with respect to $\lambda$ and the map $f:n\mapsto hn$ is a corresponding weight function. %
\end{theorem}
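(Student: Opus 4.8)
The plan is to deduce the statement directly from the Shannon--McMillan--Breiman theorem applied to the measure-preserving transformation $T$ and the partition $P=P_1$. The first step is purely a matter of bookkeeping: by the very definition of the sequence of partitions associated with a fibred system, $P_n=\bigvee_{j=0}^{n-1}T^{-j}P_1$, so that for every $x\in[0,1]\setminus E$ the atom of $P_n$ containing $x$ is $I_n(x)=\bigcap_{j=0}^{n-1}T^{-j}\big(I_1(T^jx)\big)$. Consequently the function $x\mapsto-\log\lambda(I_n(x))$ is exactly the information function of the refined partition $\bigvee_{j=0}^{n-1}T^{-j}P_1$ with respect to $\lambda$.

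Next I would invoke the Shannon--McMillan--Breiman theorem itself. Since $([0,1],T,\lambda)$ is measure preserving and ergodic, and since the hypothesis $H_1(\mathcal P,\lambda)=H_\lambda(P_1)<\infty$ is precisely the finiteness assumption the theorem requires (see \cite[p.~134]{pollicott-yuri-1998}), one obtains that $-\tfrac1n\log\lambda(I_n(x))$ converges, both $\lambda$-almost everywhere and in $L^1(\lambda)$, to the metric entropy $h:=h_\lambda(T,P_1)=\lim_{n\to\infty}\tfrac1n H_\lambda\big(\bigvee_{j=0}^{n-1}T^{-j}P_1\big)$ (the last limit existing by the usual subadditivity argument). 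The almost everywhere part is exactly the first equality in the statement. The second equality is then the $L^1$ part, obtained by integrating:
\[
-\frac1n\sum_{I\in P_n}\lambda(I)\log\lambda(I)=\int_{[0,1]}\Big(-\frac1n\log\lambda(I_n(x))\Big)\,d\lambda(x)\xrightarrow[n\to\infty]{}h .
\]

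Finally, for the log-balancedness assertion I would observe that $E$ is countable, hence Lebesgue-null, hence $\lambda(E)=0$ since $\lambda$ is equivalent to the Lebesgue measure (the system being number-theoretic); combined with the a.e.\ limit above, this is precisely the statement that $\lim_{n\to\infty}\big(-\log\lambda(I_n(x))\big)/(hn)=1$ a.e., i.e.\ that $\mathcal P$ is log-balanced a.e.\ with respect to $\lambda$ with $f(n)=hn$ as a weight function (which of course presupposes $h>0$, so that $f(n)\to\infty$, as happens in all the positive-entropy instances we consider). I do not anticipate any genuine difficulty here: the whole argument is a translation of Shannon--McMillan--Breiman into the present vocabulary, the only points deserving attention being the identification of $-\log\lambda(I_n(x))$ with the information function of the join partition, the matching of the finite-entropy hypothesis with the one the theorem demands, and the use of the $L^1$ mode of convergence to extract the Ces\`aro/entropy identity.
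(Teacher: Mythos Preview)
Your proposal is correct and matches the paper's approach exactly: the paper does not give a detailed proof but simply states that the result is a consequence of the Shannon--McMillan--Breiman theorem (citing \cite[p.~134]{pollicott-yuri-1998}), which is precisely what you do, filling in the bookkeeping details the paper omits. Your observation that the log-balancedness conclusion tacitly requires $h>0$ (so that $f(n)=hn\to\infty$) is a legitimate point the paper leaves implicit.
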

  
Under the above assumption, we thus get, from a fibred number-theoretic  system, a log-balanced  sequence of partitions, as described in Section \ref{subsec:num}. By Proposition \ref{prop:equivalent},  if $\mu$ is another probability measure equivalent to $\lambda$, then $\mathcal P$ is also log-balanced a.e.  for $\mu$ with $f$ as weight function.

\subsection{Fibred systems with positive entropy}\label{subsec:fp}

Two classical instances of ergodic  measure  preserving  fibred  systems  with  positive  entropy  are  the binary numeration system (in general, the beta-numeration) and the Gauss map producing continued fractions expansions. We describe them below.

\subsubsection{The binary numeration system}\label{subsubsec:binary} 

The binary numeration system is the fibred number-theoretic system associated with the map $T:x\mapsto 2x \mod 1$ and the partition $P=\{(0,1/2),(1/2,1)\}$ with respect to the Lebesgue measure. Its entropy for the Lebesgue measure is $\log 2$.
It associated sequence of partitions is the {\em binary  sequence of partitions} defined as  $\mathcal B=\{B_n\}_{n\in\mathbb N}$, where
\[ B_n=\left\{ \left(0,\frac 1{2^n}\right),\left(\frac 1{2^n},\frac 2{2^n}\right), \ldots, \left(\frac{2^n-1}{2^n},1\right)\right\}.\]
Hence, the endpoints of the intervals in $B_n$ are $E_n^{\mathcal B}$ are the points $a/2^n$ for $a=0,1,\ldots,2^n$. The {\em binary  sequence of partitions}  $\mathcal B$ is a.e.\ log-balanced with weight function $f_{\mathcal  B}(n)=(\log 2)n$ with respect to the Lebesgue measure.

\subsubsection{The decimal numeration system.} \label{subsubsec:decimal}  The decimal numeration system is the fibred number-theoretic system associated with the map $T:x\mapsto 10x\mod 1$ with the partition $P=\{(a/10,(a+1)/10)\}_{a=0,\ldots,9}$ with respect to the Lebesgue measure. We denote by $\mathcal D=\{D_n\}_{n\in\mathbb N}$ the {\em decimal sequence of partitions} associated with this fibred systems.
It admits as an a.e.\ weight function $f_{\mathcal D}(n)=(\log 10)n$ with respect to the Lebesgue measure.

\subsubsection{Beta-numeration sequence of  partitions}\label{subsubsec:beta} The {\em beta-numeration  sequence of partitions}  $\mathcal{ BE}$ is the   sequence  of partitions associated with  the 
the beta-numeration, i.e.,       the fibred system associated with the map $x\mapsto \beta x \mod 1$, 
 where $\beta$ is a  given  real number  with $\beta>1$.  Its entropy for the Lebesgue measure is $\log \beta $.   It is  a.e.\ log-balanced with weight function $f_{\mathcal  B}(n)=(\log \beta)n$. This  numeration   has been introduced in  \cite{Renyi}.
For more on the length of the fundamental intervals, see   e.g.  \cite{FW:12}.  When $\beta=2$, one recovers the binary 
numeration and when  $\beta=10$, the decimal one. 

\subsubsection{Continued fraction   sequence of partitions}\label{subsubsec:cf} 

The continued fraction expansions correspond to the fibred system associated with the Gauss map $T_G$ such that 
\[T_G(x)=\{1/x\}\quad\text{if } x\neq 0\qquad\text{and}\qquad T_G(0)=0, \]
where $\{x\}$ denotes the fractional part of $x$ and the partition $\{(1/(a+1),1/a)\}_{a\geq 1}$. The corresponding alphabet is the set of the positive integers. The labelling map $\rho_1$ is given by $\rho_1(x)=\lfloor 1/x\rfloor$ and $\rho_n(x)=\rho_1(T_G^{n-1}(x))$. We denote by $\mathcal{CF}$ the  sequence of partitions associated with this fibred system, called the {\em continued fraction   sequence of partitions}.

We use  the standard notation for continued fractions 
\[x=[0;a_1(x),a_2(x), a_3(x),\ldots]=
\cfrac{1}{a_1(x)+
\cfrac{1}{a_2(x)+
\cfrac{1}{a_3(x)+\dotsb
}}}
\]
 with the last partial quotient larger than 1 in the case of rationals numbers. As usual, the numerators and denominators of the $n$-th convergent $[0;a_1(x),a_2(x), \ldots, a_n(x)]$ are called continuants of the number $x$. They are denoted by  $p_n=p_n(x)$ and $q_n=q_n(x)$. For any $n\ge 1$, the following recurrence relations hold  
\begin{flalign*}
  &p_{n}=a_{n}p_{n-1}+p_{n-2},\  p_0=0,\ p_{-1}=1,\\
  &q_{n}=a_{n}q_{n-1}+q_{n-2},\  q_0=1,\ q_{-1}=0,
\end{flalign*}
together with the equality
\begin{equation}\label{eq:Teo2Khinchin}
  q_np_{n-1}-p_nq_{n-1}=(-1)^{n}.
\end{equation}

The fundamental interval $I_n^{\mathcal{CF}}(x)$ of depth $n$ associated with $x$ in the continued fraction expansion has as endpoints the fractions $p_n/q_n $ and 
 $(p_{n-1}+p_n)/(q_{n-1}+q_n)$.
 More precisely, by~\cite[p.\ 57]{khinchin}, one has
\begin{equation}
 I_n^{\mathcal{CF}}(x)
 =\begin{cases}
 \left(\dfrac{p_n}{q_n}, \dfrac{p_{n-1}+p_n}{q_{n-1}+q_n}\right)  & \text{if $n$ is even}\bigskip\\
 \left(\dfrac{p_{n-1}+p_n}{q_{n-1}+q_n}, \dfrac{p_n}{q_n}\right) &\text{if $n$ is odd}.
 \end{cases}
 \label{eq:interval-CF}
\end{equation}
In particular,
\begin{equation}\label{eq:length-CF-In(x)}
\left\vert I_n^{\mathcal{CF}}(x)\right\vert=\frac{1}{q_n (q_n + q_{n-1})}.
\end{equation}

Let the \emph{Gauss measure} be $d\mu=1/((1+x)\log 2)\,dx$. Since $T_G$ is an ergodic measure preserving transformation with respect to the Gauss measure (see, for instance, \cite[p. 106]{pollicott-yuri-1998}), the dynamical  system  $([0,1],T_G,\mu)$ is a fibred number-theoretic system. Moreover, as the Gauss measure is absolutely continuous with respect to the Lebesgue measure, it can be shown, by relying on Theorem~\ref{teo:ShannonMcMillanBreiman}, that the entropy of the  continued fraction  sequence of partitions  $\mathcal{CF}$ is $\pi^2/(6\log 2)$, 
with respect to both its invariant measure and the Lebesgue measure (see \cite[Cor. 4.1.28]{IK2002} for a detailed proof). 
It follows that the sequence of partitions  $\mathcal{CF}$  admits as an a.e.\ weight function
\begin{equation}\label{eq:f_CF}
  f_{\mathcal{CF}}(n)=\frac{\pi^2}{6\log 2}n
\end{equation}
with respect to the Lebesgue measure (and the Gauss measure as well).

An alternative derivation of the value of the a.e.\ entropy of $\mathcal{CF}$ follows from Khinchin--L\'evy's Theorem, which asserts that
\begin{equation}\label{eq:Levy}
\lim_{n\to \infty} \frac{\log q_n(x)}{n}=\frac{\pi^2}{12\log 2}\quad\text{ a.e.\ $x$}.
\end{equation}

We also recall  the  Borel--Bernstein Theorem (see \cite{BOrel:09,Bernstein:11}).
\begin{theorem}[Borel-Berstein]\label{theo:BB}
The set of real numbers $x$ such  that $a_n \leq k_n$  for  infinitely many $n$, where the $k_n$ are positive integers is of zero measure  if and only if $\sum_n 1/k_n =\infty$.

\end{theorem}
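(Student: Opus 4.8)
The plan is to prove the classical Borel--Bernstein dichotomy, from which the statement follows by passing to complements. Write $\lambda$ for the Lebesgue measure and, for the given sequence $(k_n)$ of positive integers, set $E_n:=\{x\in[0,1]\setminus E^{\mathcal{CF}}:a_n(x)\ge k_n\}$. I will show that $\lambda(\{x:x\in E_n\text{ for infinitely many }n\})$ equals $0$ if $\sum_n1/k_n<\infty$ and equals $1$ if $\sum_n1/k_n=\infty$; equivalently, $\{x:a_n(x)<k_n\text{ for all but finitely many }n\}$ is Lebesgue-null if and only if $\sum_n1/k_n=\infty$. The only quantitative input is the explicit length formula \eqref{eq:length-CF-In(x)} for the fundamental intervals of $\mathcal{CF}$, together with the continuant recurrences.

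First I would record a conditional estimate. Fix $n\ge1$ and a fundamental interval $C=I_{n-1}^{\mathcal{CF}}(x)$ of depth $n-1$, with continuant denominators $q_{n-1}\ge q_{n-2}\ge0$. The sets $\{a_n=k\}\cap C$, $k\ge1$, partition $C$ up to the endpoint set $E^{\mathcal{CF}}$, and by \eqref{eq:length-CF-In(x)} applied at depth $n$ (where then $q_n=kq_{n-1}+q_{n-2}$) each of them has Lebesgue measure $\bigl((kq_{n-1}+q_{n-2})((k+1)q_{n-1}+q_{n-2})\bigr)^{-1}$. Telescoping the sum over $k\ge K$ gives $\lambda(\{a_n\ge K\}\cap C)=\bigl(q_{n-1}(Kq_{n-1}+q_{n-2})\bigr)^{-1}$; since $\lambda(C)=\bigl(q_{n-1}(q_{n-1}+q_{n-2})\bigr)^{-1}$ and $0\le q_{n-2}\le q_{n-1}$, this yields
\[
 \tfrac1{2K}\,\lambda(C)\ \le\ \lambda\bigl(\{a_n\ge K\}\cap C\bigr)\ \le\ \tfrac2K\,\lambda(C),
\]
and summing over all fundamental intervals of depth $n-1$ gives the unconditional bound $\tfrac1{2K}\le\lambda(\{a_n\ge K\})\le\tfrac2K$.

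For the convergence half, if $\sum_n1/k_n<\infty$ then $\sum_n\lambda(E_n)\le\sum_n2/k_n<\infty$, so the first Borel--Cantelli lemma gives $\lambda(\{x:x\in E_n\text{ i.o.}\})=0$. For the divergence half the events $E_n$ are not independent, so the second Borel--Cantelli lemma is not available; instead I would iterate the conditional estimate. For any fundamental interval $C$ of depth $n-1$ one has $\lambda(E_n^c\cap C)\le(1-\tfrac1{2k_n})\lambda(C)$, and since $\bigcap_{m=N}^{n-1}E_m^c$ depends only on $a_N,\dots,a_{n-1}$ and hence is a disjoint union of depth-$(n-1)$ fundamental intervals, iterating over $n=N,\dots,M$ gives $\lambda\bigl(\bigcap_{n=N}^M E_n^c\bigr)\le\prod_{n=N}^M(1-\tfrac1{2k_n})$, which tends to $0$ as $M\to\infty$ precisely because $\sum_n1/k_n=\infty$. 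Hence $\lambda(\bigcap_{n\ge N}E_n^c)=0$ for every $N$, so $\lambda(\{x:a_n(x)<k_n\text{ for all large }n\})=0$; that is, $a_n(x)\ge k_n$ for infinitely many $n$ for almost every $x$.

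The main point requiring care is the \emph{uniformity} over fundamental intervals of the conditional lower bound $\lambda(E_n\cap C)\ge\tfrac1{2k_n}\lambda(C)$: it is exactly this uniformity that makes the telescoping-product argument a legitimate substitute for the (inapplicable) second Borel--Cantelli lemma. Equivalently, one could run the divergence half through L\'evy's conditional Borel--Cantelli lemma for the filtration generated by $(a_1,\dots,a_n)$, since the uniform bound gives $\sum_n\lambda(E_n\mid a_1,\dots,a_{n-1})\ge\sum_n\tfrac1{2k_n}=+\infty$ almost surely. Everything else is routine bookkeeping with \eqref{eq:length-CF-In(x)} and the recurrences for the $q_n$.
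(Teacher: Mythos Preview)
The paper does not give its own proof of this theorem: it is stated as a classical result and only supplied with the references \cite{BOrel:09,Bernstein:11}, so there is nothing in the paper to compare your argument against. What you have written is a correct and standard proof of the Borel--Bernstein dichotomy, using exactly the tools available in the paper (the length formula \eqref{eq:length-CF-In(x)} and the continuant recurrences); the conditional estimate, the first Borel--Cantelli half, and the product-iteration for the divergence half are all fine.

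One remark on the statement itself: as printed in the paper (``$a_n\le k_n$ for infinitely many $n$'') the assertion is not literally correct---for instance, taking $k_n\equiv 1$ gives $\sum_n 1/k_n=\infty$, yet the set $\{x:a_n(x)=1\text{ for infinitely many }n\}$ has full measure, not zero measure. The intended statement, consistent with how the theorem is actually \emph{used} in the paper (to conclude that $a_m\le m(\log m)^{1+\epsilon}$ \emph{eventually}, a.e.), is the one you prove: $\{x:a_n(x)<k_n\text{ for all but finitely many }n\}$ is Lebesgue-null if and only if $\sum_n 1/k_n=\infty$. You were right to pass to this formulation.
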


\subsection{A fibred numeration system with zero entropy: the Stern-Brocot system} \label{subsec:fz}

{\em  The Stern-Brocot  sequence of partitions}, denoted as  $\mathcal{SB}$,   is  the sequence of partitions associated with the  Farey fibred system whose map is the Farey map
\[ T_F=\begin{cases}
  \dfrac 1{1-x}-1&= \dfrac{x}{1-x}\quad \text{if }x\in[0,1/2]\bigskip\\
  \dfrac 1x-1&=\dfrac{1-x}{x}\quad \text{if } x\in[1/2,1]
\end{cases} \]
and the partition $P=\{(0,1/2),(1/2,1)\}$.
This sequence has been widely studied, see e.g. 
\cite{MZ:04}. Even if  the partition is nonuniform, considering logarithms of lengths  allows
the balance property in measure. 

The standard construction of the sequence $\mathcal{SB}=\{\text{\emph{SB}}_n\}_{n\in\mathbb N}$ is as follows (see Figure~\ref{fig:SB} for an illustration): we start with $\text{\emph{SB}}_0=\{(0/1,1/1)\}$, and
 for each $n\in\mathbb N$, 
$\text{\emph{SB}}_{n+1}$ arises from $\text{\emph{SB}}_n$ 
by dividing each interval $(p/q,p'/q')$ of $\text{\emph{SB}}_{n-1}$ into two subintervals by its mediant 
$(p+p')/(q+q')$.

\begin{figure}
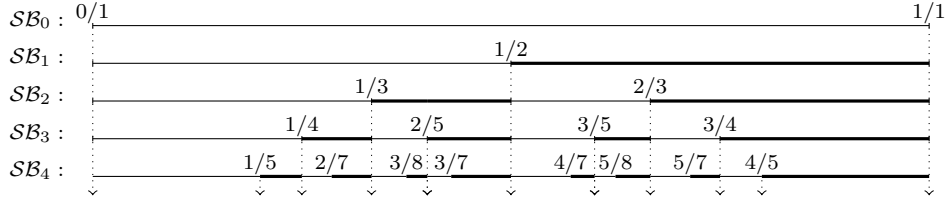

\begin{center}
\SternBrocot
\end{center}
\caption{The Stern-Brocot sequence of partitions}\label{fig:SB}
\end{figure}
We denote by $E^{\mathcal{SB}}_n$ the set of endpoints of the intervals in $\text{\emph{SB}}_n$.

Let $x\in[0,1]$  be an  irrational number and let  $\{p_m/q_m\}$ 
stand  for its sequence of convergents. We recall that the   convergents satisfy   the following
mediant-construction \cite[p.\ 14--15]{khinchin}: if $m$ is even, then
\begin{equation}\label{eq:mediant_even}
\frac{p_{m}}{q_{m}}\leq x\leq \frac{p_{m+1}}{q_{m+1}}= \frac{a_{m+1} p_m+p_{m-1}}{a_{m+1}q_m+q_{m-1}}\leq \cdots\leq\frac{p_{m-1}+p_n}{q_{m-1}+q_m}\leq\frac{p_{m-1}}{q_{m-1}},
\end{equation}
whereas if $m$ is odd, then
\begin{equation} \label{eq:mediant}
\frac{p_{m-1}}{q_{m-1}} \leq \frac{p_{m-1}+ p_n} {q_{m-1}+q_m} \leq \cdots \leq \frac{a_{m+1} p_m+p_{m-1}}{a_{m+1}q_m+q_{m-1}}=\frac{p_{m+1}}{q_{m+1}} \leq  x \leq \frac{p_{m}}{q_{m}}.
\end{equation}

The following result is well known (see~\cite[Lemma~1]{MoZhi}).

\begin{proposition}\label{prop:SternBrocotSB}
Let $x\in[0,1]$ be an irrational and let $n\in\mathbb N$. Then, the interval in $SB_n$ containing $x$ is
\[ I_n^{\mathcal{SB}}(x)=
\begin{cases}
  \left( \dfrac{p_m}{q_m}, \dfrac{(r+1) p_m +p_{m-1}}{(r+1) q_m +q_{m-1}} \right) & \text{if $m$ is even}\bigskip\\
  \left( \dfrac{(r+1) p_m +p_{m-1}}{(r+1) q_m +q_{m-1}}, \dfrac{p_m}{q_m} \right)   & \text{if $m$ is odd,}
\end{cases} \]
where $m:=m(x,n)$ and $r:=r(x,n)$ are the unique integers such that
\[ \sum_{i=1}^m a_i\leq n<\sum_{i=1}^{m+1}a_i\quad\text{and}\quad r=n-\sum_{i=1}^m a_i. \]
As a consequence, one has
\[
\left\vert I_{n}^{\mathcal{SB}} (x)\right\vert=\frac 1{((r+1)q_m+q_{m-1})q_m}. \]
\end{proposition}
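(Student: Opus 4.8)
The plan is to proceed by induction on $n$, tracking how the Stern--Brocot subdivision interacts with the mediant tower~\eqref{eq:mediant_even}--\eqref{eq:mediant} attached to the continued fraction convergents of $x$. First I would record the base case $n=0$: here $m(x,0)=0$ and $r(x,0)=0$, and since $p_0/q_0 = 0/1$ while $(1\cdot p_0 + p_{-1})/(1\cdot q_0 + q_{-1}) = 1/1$, the claimed interval is exactly $(0/1,1/1) = \text{\emph{SB}}_0$, with the parity convention ($m=0$ even) placing $p_0/q_0$ on the left. Then I would set up the inductive step: assuming $I_n^{\mathcal{SB}}(x)$ has the stated form with parameters $m$ and $r$, I split into two cases according to whether incrementing $n$ keeps us inside the same ``block'' of the continued fraction (i.e. $n+1 < \sum_{i=1}^{m+1} a_i$, so $m(x,n+1)=m$ and $r(x,n+1)=r+1$) or crosses into the next block (i.e. $n+1 = \sum_{i=1}^{m+1} a_i$, so $m(x,n+1)=m+1$ and $r(x,n+1)=0$).

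In the first case, the endpoints of $I_n^{\mathcal{SB}}(x)$ are $p_m/q_m$ and the mediant-type fraction $((r+1)p_m+p_{m-1})/((r+1)q_m+q_{m-1})$; their mediant is $((r+2)p_m+p_{m-1})/((r+2)q_m+q_{m-1})$, and one checks from~\eqref{eq:mediant_even}--\eqref{eq:mediant} that $x$ lies in the half bounded by $p_m/q_m$ and this new mediant (because $x$ is still on the $p_m/q_m$ side of all the intermediate mediants until we reach $p_{m+1}/q_{m+1}$, which happens only when $r+1 = a_{m+1}$). This gives precisely the claimed interval with parameters $m$ and $r+1$, and the parity is unchanged, so the left/right placement is consistent. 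In the second case we have $r+1 = a_{m+1}$, so $(r+1)p_m + p_{m-1} = a_{m+1}p_m + p_{m-1} = p_{m+1}$ and likewise for the denominator; thus $I_n^{\mathcal{SB}}(x)$ has endpoints $p_m/q_m$ and $p_{m+1}/q_{m+1}$. Its mediant is $(p_m+p_{m+1})/(q_m+q_{m+1})$, which is exactly $(1\cdot p_{m+1} + p_m)/(1\cdot q_{m+1} + q_m)$, i.e. the fraction obtained by plugging $m+1$ and $r=0$ into the formula (note the roles of $p_m,p_{m+1}$ as ``$p_{m-1},p_m$'' shifted by one); and~\eqref{eq:mediant_even}--\eqref{eq:mediant} show $x$ lies between $p_{m+1}/q_{m+1}$ and this mediant. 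Since $p_{m+1}/q_{m+1}$ now plays the role of the ``$p_m/q_m$'' endpoint and the parity of $m+1$ is opposite to that of $m$, the left/right convention flips correctly. This closes the induction.

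Finally, the length formula follows immediately: the two endpoints of $I_n^{\mathcal{SB}}(x)$ are $p_m/q_m$ and $((r+1)p_m+p_{m-1})/((r+1)q_m+q_{m-1})$, so by~\eqref{eq:Teo2Khinchin} (which gives $q_m p_{m-1} - p_m q_{m-1} = (-1)^m$) the difference of the two fractions has absolute value
\[
\left\vert \frac{p_m}{q_m} - \frac{(r+1)p_m+p_{m-1}}{(r+1)q_m+q_{m-1}} \right\vert
= \frac{\vert p_m q_{m-1} - q_m p_{m-1}\vert}{q_m\bigl((r+1)q_m+q_{m-1}\bigr)}
= \frac{1}{q_m\bigl((r+1)q_m+q_{m-1}\bigr)},
\]
as claimed. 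The main obstacle I anticipate is purely bookkeeping: getting the parity conventions and the index shift (the $(m,m-1)$ pair becoming the $(m+1,m)$ pair when a block is completed) exactly right, together with verifying from the mediant chains~\eqref{eq:mediant_even}--\eqref{eq:mediant} that $x$ always falls in the correct half at each subdivision step. There is no analytic difficulty — the whole argument is an induction matching the Stern--Brocot halving against the known intermediate-fraction description of continued fraction convergents.
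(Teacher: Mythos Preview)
Your induction is correct and is the natural way to establish this result: the base case and both inductive subcases are handled properly, the mediant chains~\eqref{eq:mediant_even}--\eqref{eq:mediant} do show that $x$ falls in the stated half at each subdivision (strictly, since $x$ is irrational), and the length computation via~\eqref{eq:Teo2Khinchin} is clean.

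The paper itself does not prove this proposition; it simply cites it as well known, referring to \cite[Lemma~1]{MoZhi}. So your proposal is not so much a different route as the only route actually written out here. The argument you give is essentially the standard one and would be a perfectly good self-contained replacement for the citation.
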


The next two results show that, with respect to the Lebesgue measure, the Stern-Brocot sequence of partitions is log-balanced in measure but not a.e.

\begin{proposition}\label{prop:fSB} The Stern-Brocot sequence of partitions admits as weight function in measure with respect to the Lebesgue measure
\[
f_{\mathcal{SB}}(n):=\frac{\pi^2}6\frac{n}{\log n}\quad\text{for each }n\geq 2.
\]
\end{proposition}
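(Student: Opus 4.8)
The plan is to show that, for Lebesgue-almost no reason will be needed here, the quantity $-\log|I_n^{\mathcal{SB}}(x)|$ is, in measure, asymptotically equivalent to $f_{\mathcal{SB}}(n) = \frac{\pi^2}{6}\cdot\frac{n}{\log n}$. By Proposition~\ref{prop:SternBrocotSB} we have the explicit formula
\[
  -\log\bigl|I_n^{\mathcal{SB}}(x)\bigr| = \log\bigl((r+1)q_m + q_{m-1}\bigr) + \log q_m,
\]
where $m = m(x,n)$ and $r = r(x,n)$ satisfy $\sum_{i=1}^m a_i \le n < \sum_{i=1}^{m+1} a_i$ and $r = n - \sum_{i=1}^m a_i$. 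The first observation is that $q_{m-1} \le q_m \le (r+1)q_m + q_{m-1} \le (a_{m+1}+1)q_m + q_{m-1} = q_{m+1} + q_m \le 2 q_{m+1}$, so up to an additive error of at most $\log 2 + \log q_{m+1} - \log q_m$, which is $O(\log a_{m+1})$, we have $-\log|I_n^{\mathcal{SB}}(x)| = 2\log q_m + O(\log a_{m+1})$. Thus it suffices to prove that $\frac{2\log q_m}{f_{\mathcal{SB}}(n)} \to 1$ in measure, and that the error term is negligible in measure relative to $f_{\mathcal{SB}}(n)$.

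The heart of the argument is to control $m = m(x,n)$ in terms of $n$. Write $S_m(x) = \sum_{i=1}^m a_i(x)$, so $m(x,n)$ is essentially the inverse function of $m \mapsto S_m$. The classical fact here is that $S_m/(m\log m) \to 1/\log 2$ in measure (this is a weak law; the partial quotients are not integrable, so there is no a.e.\ version — which is precisely why only convergence in measure is obtained, cf.\ the second assertion of Proposition~\ref{prop:fSB} presumably addressed separately). Inverting, one expects $m(x,n) \sim (\log 2)\, n/\log n$ in measure. Combining this with Khinchin--L\'evy's theorem \eqref{eq:Levy}, $\log q_m/m \to \pi^2/(12\log 2)$ a.e., gives
\[
  2\log q_m \sim 2 \cdot \frac{\pi^2}{12\log 2}\, m \sim \frac{\pi^2}{6\log 2}\cdot (\log 2)\,\frac{n}{\log n} = \frac{\pi^2}{6}\,\frac{n}{\log n} = f_{\mathcal{SB}}(n),
\]
in measure, as desired. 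One must be a little careful chaining ``in measure'' statements with ``a.e.'' statements and with the change of index from $n$ to $m(x,n)$: the cleanest route is to fix $\varepsilon > 0$, use the weak law for $S_m$ to confine $m(x,n)$ to an interval $[(1-\varepsilon)(\log 2)n/\log n,\ (1+\varepsilon)(\log 2)n/\log n]$ off a set of small measure, and then apply the a.e.\ (hence in-measure) Khinchin--L\'evy estimate uniformly over that range of indices, using monotonicity of $q_m$ in $m$ to get two-sided bounds.

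The main obstacle is twofold. First, establishing the weak law $S_m/(m\log m)\to 1/\log 2$ in measure and, more importantly, inverting it to get the asymptotics of $m(x,n)$: the inversion is routine when the forward asymptotic holds with enough uniformity, but because $S_m$ grows only slightly faster than linearly (the $\log m$ factor), one needs the concentration to be good enough that the fluctuations in $S_m$ do not translate into first-order fluctuations in $m(x,n)$ after dividing by $n/\log n$ — this should work since $\log S_m \sim \log m \sim \log n$ and the relative error in $S_m$ is $o(1)$ in measure. Second, controlling the error term $O(\log a_{m+1})$: one needs $\log a_{m(x,n)+1} = o(n/\log n)$ in measure, which follows easily because $a_{m+1} \le S_{m+1}$ and $\log S_{m+1} = O(\log n)$, so in fact the error is $O(\log n) = o(n/\log n)$ deterministically on the relevant event — so this is the mild part. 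I would structure the write-up as: (1) reduce via Proposition~\ref{prop:SternBrocotSB} to estimating $2\log q_{m(x,n)}$; (2) state and prove (or cite) the weak law for $S_m$ and invert it; (3) apply Khinchin--L\'evy; (4) assemble the $\varepsilon$-estimates.
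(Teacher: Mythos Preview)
Your proposal is correct and follows essentially the same strategy as the paper's proof: both reduce via Proposition~\ref{prop:SternBrocotSB} to estimating $2\log q_{m(x,n)}$, invoke Khinchin--L\'evy for $\log q_m/m$, and use Khinchin's weak law $S_m/(m\log m)\to 1/\log 2$ in measure to relate $m(x,n)$ to $n$. The paper organises the computation as the three-factor product
\[
\frac{\log q_m}{n/\log n}=\frac{\log q_m}{m}\cdot\frac{\log n}{\log m}\cdot\frac{m\log m}{n},
\]
handling the middle factor $\log n/\log m\to 1$ \emph{a.e.}\ via Diamond--Vaaler and Borel--Bernstein, whereas you absorb this into the inversion of the weak law; both routes are fine.

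One small correction to your error-term argument: the claim ``$\log S_{m+1}=O(\log n)$ deterministically on the relevant event'' is not right as written, since by definition $S_{m+1}>n$ and there is no a~priori upper bound on $a_{m+1}$ in terms of $n$. The clean fix (which is what the paper does) is to bypass $a_{m+1}$ entirely: from $r=n-S_m\le n$ and $q_{m-1}\le q_m$ one gets $(r+1)q_m+q_{m-1}\le (n+2)q_m$, hence $-\log|I_n^{\mathcal{SB}}(x)|=2\log q_m+O(\log n)$ deterministically, which is $o(n/\log n)$. With this adjustment your write-up plan is sound.
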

\begin{proof} Let $x\in[0,1]$ an irrational number and let $n\in\mathbb N$.  Let $m$ and $r$  be as in Proposition~\ref{prop:SternBrocotSB}, i.e.,
 $\sum_{i=1}^m a_i\leq n<\sum_{i=1}^{m+1}a_i$ and $r=n-\sum_{i=1}^m a_i$. Hence,
\[
\frac 1{2(r+1)q_m^2}\le\left\vert I_{n}^{\mathcal{SB}} (x)\right\vert\le \frac 1{(r+1)q_m^2}.
\]
Therefore, one has 
\begin{equation}\label{eq:-logSB}
-\frac{\log\vert  I_n^{\mathcal{SB}} (x)\vert}{n/\log n}=\frac{\log(r+1)}{n/\log n} + 2\frac{\log q_m}{n/\log n}+O\left(\frac{\log n}n\right)\quad\text{as }n\to\infty.
\end{equation}
Since $r\le n$, the first term in the above equation tends to zero when $n\to \infty$ a.e. We write
\begin{equation}\label{eq:three-factors} \frac{\log q_m}{n/\log n}=\frac{\log q_m}{m}\frac{\log n}{\log m}\frac{m\log m}{n}.
\end{equation}
Notice that, by~\eqref{eq:Levy}, the first factor $\frac{\log q_m}{m}$ on the above right-hand side tends to $\frac{\pi^2}{12\log 2}$ as $m\to\infty$ a.e. Let us prove that
\begin{equation}\label{eq:log n-sim-log m} \lim_{n\to\infty} \frac{\log n}{\log m}=1\quad\text{a.e.\ }x. \end{equation}
(We recall that $m$ depends on $x$ and $n$.)
By~\cite[Corollary 1]{DiamondVaaler86}, for a.e.\ $x$ and large enough $m$ (depending on $x$),
\[ \sum_{i=1}^m a_i\leq\frac{1+o(1)}{\log 2}m\log m+\max_{1\leq i\leq m}a_i. \]
Fix $\epsilon>0$. 
By Borel--Bernstein Theorem (see Theorem \ref{theo:BB}), for a.e.\ $x$ and for $m$  large enough (depending on $x$), one has 
\[ a_m\leq m(\log m)^{1+\epsilon}. \]
For a.e.\ $x$ and $n$ large enough (depending on $x$), this yields
\begin{align*}
  m\leq n<\sum_{i=1}^{m+1}a_i&\leq\left(\frac{1+o(1)}{\log 2}\log(m+1)+(\log(m+1))^{1+\epsilon}\right)(m+1)\\
 &=O(m(\log m)^{1+\epsilon}).
\end{align*}
Hence, $\log n=\log m+O(\log\log m)$ as $n\to\infty$, a.e.\ $x$. This proves~\eqref{eq:log n-sim-log m}.

In \cite[p.\ 377]{khinchin-partial-sums}, Khinchin proved that  $\frac{\log 2} {m\log m} \sum_{i=1}^m a_i$ tends to $1$ in measure as $m\to \infty$. Since, by definition, $\sum_{i=1}^m a_i\leq n<\sum_{i=1}^{m+1}a_i$, this implies that
\begin{equation}\label{eq:mlogm/n-in-measure}
  \lim_{n\to\infty}\frac{m\log m}n=\log 2\quad\text{in measure}.
\end{equation}
With Equation \eqref{eq:three-factors}, the result follows.
\end{proof}

\begin{proposition} The Stern-Brocot sequence of partitions is not log-balanced a.e.\ with respect to the Lebesgue measure.
\end{proposition}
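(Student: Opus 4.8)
The plan is to argue by contradiction: assume $\mathcal{SB}$ is log-balanced a.e.\ with respect to the Lebesgue measure, with some weight function $g$. Since a.e.\ convergence implies convergence in measure, $g$ is then also a weight function in measure, and so is $f_{\mathcal{SB}}(n)=\frac{\pi^2}{6}\frac{n}{\log n}$ by Proposition~\ref{prop:fSB}. Any two weight functions in measure of the same sequence of partitions must be asymptotically equivalent: if $-\log\lambda(I_n(x))/g(n)\to 1$ and $-\log\lambda(I_n(x))/f(n)\to 1$ in measure, then for each large $n$ one may choose $x$ at which both ratios are simultaneously close to $1$, which forces $g(n)/f(n)\to 1$. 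Hence $g(n)\sim\frac{\pi^2}{6}\frac{n}{\log n}$, and it would follow that $\bigl(-\log|I_n^{\mathcal{SB}}(x)|\bigr)\frac{\log n}{n}\to\frac{\pi^2}{6}$ for a.e.\ $x$. The remainder of the proof refutes this by exhibiting, for a.e.\ $x$, a subsequence of depths along which this quantity tends to $0$.

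The subsequence I would use is $n=N_k:=\sum_{i=1}^k a_i(x)$. For this $n$ the integers of Proposition~\ref{prop:SternBrocotSB} are $m=k$ and $r=0$, so that proposition yields $|I_{N_k}^{\mathcal{SB}}(x)|=1/\bigl((q_k+q_{k-1})q_k\bigr)$ and therefore $-\log|I_{N_k}^{\mathcal{SB}}(x)|=2\log q_k+O(1)$ as $k\to\infty$. I would then substitute Khinchin--L\'evy's Theorem~\eqref{eq:Levy}, namely $\log q_k\sim\frac{\pi^2}{12\log 2}k$ a.e., together with the estimate $\log N_k=\log k+O(\log\log k)$ a.e.\ already obtained inside the proof of Proposition~\ref{prop:fSB} (applied with $n=N_k$, $m=k$), to get
\[
\Bigl(-\log|I_{N_k}^{\mathcal{SB}}(x)|\Bigr)\frac{\log N_k}{N_k}\ \sim\ \frac{\pi^2}{6\log 2}\cdot\frac{k\log k}{N_k}\qquad\text{a.e.}
\]

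To finish I would apply the Borel--Bernstein Theorem (Theorem~\ref{theo:BB}) in the form: if $\sum_k 1/\phi(k)=\infty$, then for a.e.\ $x$ one has $a_k(x)\ge\phi(k)$ for infinitely many $k$. Choosing $\phi(k)=k(\log k)(\log\log k)$, whose reciprocals form a divergent series, we obtain, for a.e.\ $x$, infinitely many $k$ with $N_k\ge a_k\ge k(\log k)(\log\log k)$, hence $\frac{k\log k}{N_k}\le\frac{1}{\log\log k}\to 0$ along those $k$. Thus $\liminf_k\bigl(-\log|I_{N_k}^{\mathcal{SB}}(x)|\bigr)\frac{\log N_k}{N_k}=0$ a.e., contradicting the required limit $\pi^2/6$ and proving the proposition. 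I expect the only delicate points to be the uniqueness up to asymptotic equivalence of an in-measure weight function (used in the reduction above) and checking that the $O(1)$ and $O(\log\log k)$ error terms are indeed negligible against $N_k/\log N_k$; beyond these, the argument is a routine application of the classical metric theory of continued fractions.
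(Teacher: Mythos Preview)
Your proof is correct and follows the same overall strategy as the paper's: reduce (via uniqueness of in-measure weight functions up to asymptotic equivalence) to showing that $f_{\mathcal{SB}}$ is not an a.e.\ weight function, and then exhibit, for a.e.\ $x$, a subsequence of depths along which $\bigl(-\log|I_n^{\mathcal{SB}}(x)|\bigr)\tfrac{\log n}{n}$ fails to tend to $\tfrac{\pi^2}{6}$. The paper carries this out by keeping $n$ generic, factoring $\tfrac{\log q_m}{n/\log n}=\tfrac{\log q_m}{m}\cdot\tfrac{\log n}{\log m}\cdot\tfrac{m\log m}{n}$, and then invoking Philipp's theorem \cite{Philipp88} to get $\liminf_{m}\tfrac{m\log m}{S_m}=0$ a.e., which forces the third factor (hence the whole expression) not to converge a.e. You instead specialise to $n=N_k=S_k$, compute $-\log|I_{N_k}^{\mathcal{SB}}(x)|=2\log q_k+O(1)$ directly, and replace Philipp's theorem by the divergent direction of Borel--Bernstein with $\phi(k)=k\log k\log\log k$ to obtain $\tfrac{k\log k}{N_k}\to 0$ along a subsequence. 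This is a more elementary and self-contained route to the same conclusion, at the price of a short extra computation; the paper's version is terser because it can quote Philipp's result off the shelf. The error-term and uniqueness checks you flag as delicate are indeed routine and cause no trouble.
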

\begin{proof} If  the Stern-Brocot sequence of interval partitions $\mathcal{SB}$ were to admit an a.e.\ weight function $f$ with respect to the Lebesgue measure, then, because of Proposition~\ref{prop:fSB}, $f(n)/f_{\mathcal{SB}}(n)\to 1$ as $n\to\infty$. Thus, it suffices to prove that $f_{\mathcal{SB}}$ is not an a.e.\ weight function for $\mathcal{SB}$.

Let $S_m=S_m(x,n):=\sum_{i=1}^m a_i$.
Since $n\geq S_m$, by virtue of~\cite[Theorem 1]{Philipp88} we have that
\[ \liminf_{n\to\infty}\frac{m\log m}n\leq\liminf_{n\to\infty}\dfrac{m\log m}{S_m}=0\quad\text{a.e.\ }x. \]
Notice however that~\eqref{eq:mlogm/n-in-measure} holds. As a consequence, $\frac{m\log m}n$ does not converge as $n\to\infty$ a.e. As the first two factors on the right-hand side of \eqref{eq:three-factors} converge as $n\to\infty$ a.e., $\log q_m/(n/\log n)$ does not and, consequently, neither does the left-hand side of \eqref{eq:-logSB}. This proves that $f_{\mathcal{SB}}$ is not an a.e.\ weight function for $\mathcal{SB}$.
\end{proof}

The Stern-Brocot sequence of partitions has zero Shannon entropy with respect to the Lebesgue measure (see \cite{Aofa:20}). Moreover, the following also holds.

\begin{proposition}
The Stern-Brocot sequence of partitions has zero   entropy a.e.\ with respect to the Lebesgue measure.
\end{proposition}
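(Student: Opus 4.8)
The plan is to prove directly that $-\log\bigl|I_n^{\mathcal{SB}}(x)\bigr|/n\to 0$ for Lebesgue-almost every irrational $x$, which is precisely the assertion that $\mathcal{SB}$ has zero entropy a.e.\ with respect to the Lebesgue measure. Starting from Proposition~\ref{prop:SternBrocotSB}, write $m=m(x,n)$ and $r=r(x,n)$ as there; since $q_{m-1}\le q_m$ one has $(r+1)q_m^2\le\bigl((r+1)q_m+q_{m-1}\bigr)q_m\le 2(r+1)q_m^2$, so that
\[
-\log\bigl|I_n^{\mathcal{SB}}(x)\bigr|=\log(r+1)+2\log q_m+O(1)\qquad(n\to\infty),
\]
exactly as already observed in the proof of Proposition~\ref{prop:fSB}. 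As $0\le r\le n$ we have $\log(r+1)=O(\log n)=o(n)$, so everything reduces to showing $\log q_m=o(n)$ a.e.

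The crucial intermediate step is that $m=m(x,n)=o(n)$ for a.e.\ $x$. I would derive this from the Birkhoff ergodic theorem for the Gauss map $T_G$ (ergodic with respect to the Gauss measure $\mu$, which is equivalent to the Lebesgue measure) applied to the bounded functions $x\mapsto\min(a_1(x),K)$: for each fixed $K\in\mathbb N$,
\[
\lim_{m\to\infty}\frac1m\sum_{i=1}^m\min(a_i,K)=\int_0^1\min(a_1,K)\,d\mu\qquad\text{a.e.}
\]
Intersecting these full-measure sets over $K\in\mathbb N$ and noting that $\int_0^1 a_1\,d\mu=\sum_{k\ge1}k\,\mu(a_1=k)=+\infty$ (because $\mu(a_1=k)=\Theta(1/k^2)$), monotone convergence yields $\frac1m S_m\to+\infty$ a.e., where $S_m=\sum_{i=1}^m a_i$. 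Fix such an $x$: since $S_m\le n$ and $S_m\ge m$, we have $m=m(x,n)\to\infty$ as $n\to\infty$, and from $n\ge S_m=m\cdot(S_m/m)$ it follows that $m/n\le (S_m/m)^{-1}\to 0$; hence $m=o(n)$.

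Finally, Khinchin--L\'evy's theorem~\eqref{eq:Levy} gives $\log q_m=\frac{\pi^2}{12\log 2}\,m\,(1+o(1))$ a.e., so $\log q_m=O(m)=o(n)$ a.e.; substituting this together with $\log(r+1)=o(n)$ into the displayed identity gives $-\log\bigl|I_n^{\mathcal{SB}}(x)\bigr|=o(n)$ a.e., i.e.\ $\lim_{n\to\infty}-\log\bigl|I_n^{\mathcal{SB}}(x)\bigr|/n=0$ a.e., as desired. The main obstacle is the claim $m=o(n)$ a.e.: the Ces\`aro averages $S_m/m$ of the partial quotients do not converge (the partial quotients have infinite mean under $\mu$), so no ergodic theorem applies to them directly, and the truncation argument is needed to still extract $S_m/m\to+\infty$.
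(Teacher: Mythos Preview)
Your proof is correct and follows the same overall skeleton as the paper's: reduce to showing $\log q_m=o(n)$ a.e., then deduce this from $m=o(n)$ a.e.\ combined with Khinchin--L\'evy. The difference lies entirely in how you establish $m=o(n)$. The paper invokes the Diamond--Vaaler estimate \cite{DiamondVaaler86}, which gives the quantitative lower bound $n\ge S_m\ge m\log m$ for a.e.\ $x$ and large $m$. You instead use a truncation-plus-Birkhoff argument to show $S_m/m\to+\infty$ a.e., which yields $m/n\le m/S_m\to 0$ without any rate. Your route is more self-contained (it needs only the ergodicity of the Gauss map and the divergence of $\int a_1\,d\mu$, both standard), whereas the paper's route imports a sharper external result but gets a quantitative bound essentially for free. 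One small wording issue: your justification that $m(x,n)\to\infty$ as $n\to\infty$ is not quite what the inequalities $S_m\le n$ and $S_m\ge m$ give; the clean reason is that $n<S_{m+1}$ forces $m+1\to\infty$ since each $S_k$ is finite.
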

\begin{proof} Letting $m$ be as in the proof of Proposition~\ref{prop:fSB} and combining \eqref{eq:-logSB}, \eqref{eq:three-factors}, and~\eqref{eq:Levy}, it follows that
\[ -\frac{\log\vert I_n^{\mathcal{SB}}(x)\vert}{n}=O\left(\frac mn\right)\quad\text{as }n\to\infty,\text{ a.e.\ }x.
\]
By~\cite[Corollary 1]{DiamondVaaler86},
for a.e.\ $x$, and for $n$ large enough (depending on $x$), $n\geq\sum_{i=1}^m a_i\geq m\log m$. The proposition follows.
\end{proof}

\subsection{The Farey  sequence  of  partitions   }\label{subsec:nfz}

In this section, we introduce two zero entropy sequences of partitions, namely the Farey sequence of partitions and  the closely related Sturmian  sequence of partitions. Both  are nonfibred  examples  having  zero entropy.
The fact  that they are indeed zero entropy follows from  Proposition \ref{prop:polynomially} because they have polynomially many intervals at each depth.

\subsubsection{The Farey sequence of partitions ${\mathcal F}$} The \emph{Farey sequence of partitions} $\mathcal F=\{F_n\}_{n\in\mathbb N}$ is defined as follows. Each of the partitions $F_n$ is determined by the set $E_n^{\mathcal F}$ of endpoints of its intervals, i.e., 
\[ E^{\mathcal F}_n:= \left\{ \frac p q:\,p,q \geq 1,\  \gcd(p,q)=1\text{ and }q \leq n+1\right\}. \]

The set of  endpoints $E^{\mathcal F}_n$ thus corresponds  to the  {\em Farey sequence of  order $n+1$}  which is the sequence of fractions $h/k$ with $(h,k)=1$ and $1 \leq h \leq k \leq n+1$, arranged in increasing order between 0 and 1, and studied e.g.
in \cite{Hall:70,KSS}.

Equivalently, $\mathcal F$ can be built recursively as follows. Let $ F_0=\{(0/1,1/1)\}$ and, for each $n\in\mathbb N$, let $F_{n+1}$ be the partition that arises from $F_n$ by dividing each of the intervals $(p/q,p'/q')$ such that $q+q'$ is at most $n+2$ into two subintervals by its mediant $(p+p')/(q+q')$, while keeping all the other intervals of $F_n$ unchanged.

The construction of the Farey sequence of partitions closely resembles the construction of the Stern-Brocot sequence of partitions given in Section~\ref{subsec:fz}. In fact, the only difference is that when constructing  the sequence of partitions $\mathcal{SB}$ we divide every interval $(a/c,b/d)$ of $\text{\emph{SB}}_n$ into two subintervals, whereas when constructing $\mathcal F$ only those intervals $(p/q,p'/q')$ of $F_n$ which satisfy $q+q'\leq n+2$ are divided. Compare Figures~\ref{fig:SB} and~\ref{fig:Farey}.

\begin{figure}
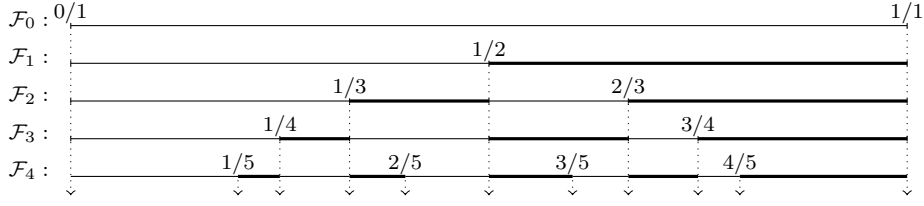

\begin{center}
\Farey
\caption{The Farey sequence of partitions}\label{fig:Farey}\end{center}
\end{figure}

By the definition of $E_n^{\mathcal F}$, it is clear that $F_n$ consists of $O(n^2)$ intervals and thus Proposition~\ref{prop:polynomially} implies that $\mathcal F$ has zero entropy. Notice that, instead, each partition $\text{\emph{SB}}_n$ consists of $2^n$ intervals.

Let us recall some basic facts regarding the Farey sequence.

\begin{theorem}[\cite{Hall:70,HW:08}]\label{thm:Hall}
Two irreducible fractions $p/q$ and $p'/q'$ in $[0,1]$ are consecutive endpoints in $\mathcal F_n$ if and only if $q\leq n+1$, $q'\leq n+1$, $|p'q-pq'|=1$ and $q+q'>n+1$.
\end{theorem}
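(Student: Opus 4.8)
The statement to be proved is Theorem~\ref{thm:Hall}: two irreducible fractions $p/q$ and $p'/q'$ in $[0,1]$ are consecutive endpoints in $\mathcal F_n$ (i.e., in $E_n^{\mathcal F}$, the Farey sequence of order $n+1$) if and only if $q\leq n+1$, $q'\leq n+1$, $|p'q-pq'|=1$, and $q+q'>n+1$. I would prove this as a standard fact about Farey sequences, combining the classical determinant characterization of Farey neighbors with the mediant-insertion description of how $E_{n}^{\mathcal F}$ is obtained from $E_{n-1}^{\mathcal F}$, both of which are available from the recursive construction of $\mathcal F$ given just above the theorem.

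First I would recall (or prove by a short induction on $n$) the classical fact that if $p/q < p'/q'$ are \emph{any} two consecutive terms of the Farey sequence of order $N$ then $p'q - pq' = 1$. The induction uses the mediant step: passing from order $N-1$ to order $N$, a new fraction $a/b$ with $b = N$ is inserted between an old neighboring pair $p/q < p'/q'$ exactly when $q + q' = N$, and in that case $a/b = (p+p')/(q+q')$, so that $ab$-type determinants $aq - bp$ and $p'b - q'a$ are again equal to $1$ by the inductive hypothesis $p'q - pq' = 1$. This simultaneously shows that every newly created pair of neighbors again has determinant $1$ and that the order-$N$ denominator of a freshly inserted fraction is $q + q'$ where $p/q, p'/q'$ were its order-$(N-1)$ neighbors. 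Applying this with $N = n+1$ gives the forward direction: if $p/q$ and $p'/q'$ are consecutive in $\mathcal F_n$ then $q, q' \le n+1$ and $|p'q - pq'| = 1$; and $q + q' > n+1$ because otherwise the mediant $(p+p')/(q+q')$ is an irreducible fraction (irreducibility is automatic from $p'q-pq'=1$) of denominator $\le n+1$ lying strictly between them, contradicting consecutiveness.

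For the converse, suppose $p/q < p'/q'$ both have denominator $\le n+1$, satisfy $p'q - pq' = 1$, and $q + q' > n+1$. Then both lie in $E_n^{\mathcal F}$. Suppose for contradiction there is some $a/b \in E_n^{\mathcal F}$ with $p/q < a/b < p'/q'$ and $b \le n+1$. Write $a/b - p/q = (aq - bp)/(bq)$ and $p'/q' - a/b = (p'b - aq')/(bq')$; both numerators $aq - bp$ and $p'b - aq'$ are positive integers, hence $\ge 1$. Multiplying out and using $p'q - pq' = 1$ gives the identity $b = b(p'q - pq') = q'(aq - bp) + q(p'b - aq') \ge q' + q > n+1$, contradicting $b \le n+1$. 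Hence no such $a/b$ exists and $p/q, p'/q'$ are consecutive in $\mathcal F_n$.

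The argument is entirely elementary; the only point requiring care is the base case and bookkeeping in the induction establishing the determinant-one property and the mediant-denominator formula — in particular making sure the recursive construction of $\mathcal F$ stated in Section~\ref{subsec:nfz} (dividing $(p/q,p'/q')$ by its mediant precisely when $q+q' \le n+2$) matches the ``order $n+1$'' indexing of $E_n^{\mathcal F}$. That indexing shift is the main place an off-by-one error could creep in, so I would state the induction hypothesis as: ``$E_{N-1}^{\mathcal F}$ consists exactly of all irreducible $p/q \in [0,1]$ with $q \le N$, and consecutive members have determinant $1$'', and verify it for $N = 1$ directly before running the mediant step. Alternatively, since this is a well-known result, one could simply cite \cite{Hall:70,HW:08} and omit the proof, as the theorem statement already does.
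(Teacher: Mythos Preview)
Your proof is correct and follows the standard elementary argument for this classical Farey-sequence fact. Note, however, that the paper does not actually prove Theorem~\ref{thm:Hall}: it is stated with citations to \cite{Hall:70,HW:08} and used as a black box in the proof of Proposition~\ref{prop:FareyFF}. Your explicit argument via the determinant-one property and the identity $b = q'(aq-bp) + q(p'b - aq')$ is exactly the textbook proof one finds in Hardy--Wright, so there is no discrepancy in approach---you have simply supplied what the paper chose to cite. Your closing remark already anticipates this option.
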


The following result is probably folklore. However, given the lack of a suitable reference for it, we give a precise statement and a proof for the sake of completeness.

\begin{proposition}\label{prop:FareyFF}
Let $x\in[0,1]$ be an irrational and let $n\in\mathbb N$. Then, the interval in $F_n$ containing $x$ is
\[ I_n^{\mathcal F}(x)=
\begin{cases}
  \left( \dfrac{p_m}{q_m},\dfrac{(r+1) p_m +p_{m-1}}{(r+1) q_m +q_{m-1}} \right)&\text{if $m$ is even}\bigskip\\
  \left( \dfrac{(r+1) p_m +p_{m-1}}{(r+1) q_m +q_{m-1}}, \dfrac{p_m}{q_m} \right) &\text{if $m$ is odd,}
\end{cases} \]
where $m:=m(x,n)$ and $r:=r(x,n)$ are the unique integers such that
\begin{equation}\label{eq:m_and_r}
  (r+1) q_m+q_{m-1}\leq n+1<(r+2)q_m+q_{m-1},\ m\geq 0,\text{ and }0 \leq r < a_{m+1}.
\end{equation}
As a consequence,
\[ \vert I_n^{\mathcal F}(x)\vert=\frac{1}{ ((r+1) q_m +q_{m-1})q_m}. \]
\end{proposition}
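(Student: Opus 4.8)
The plan is to characterize the interval $I_n^{\mathcal F}(x)$ by identifying, among all endpoints in $E_n^{\mathcal F}$, the two consecutive ones that bracket $x$, and then to recognize these as specific terms in the mediant construction between consecutive convergents $p_m/q_m$ and $p_{m+1}/q_{m+1}$ of $x$. First I would recall, from the mediant construction recalled in \eqref{eq:mediant_even} and \eqref{eq:mediant}, that the intermediate fractions lying between $p_{m-1}/q_{m-1}$ and $p_{m+1}/q_{m+1}$ (on the appropriate side of $x$) are exactly $\frac{(r+1)p_m+p_{m-1}}{(r+1)q_m+q_{m-1}}$ for $0\le r<a_{m+1}$, and that each such fraction is in lowest terms with denominator $(r+1)q_m+q_{m-1}$ (this follows from $q_m p_{m-1}-p_m q_{m-1}=\pm 1$, i.e.\ \eqref{eq:Teo2Khinchin}, which gives $\gcd((r+1)q_m+q_{m-1},\,(r+1)p_m+p_{m-1})=1$, and these denominators strictly increase with $r$).

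Next I would pin down $m$ and $r$. Since $p_m/q_m$ is always an endpoint of $F_n$ once $q_m\le n+1$, and the neighbour of $p_m/q_m$ on the side towards $x$ is one of the intermediate fractions above, the correct $m$ is the largest index with $q_m\le n+1$, and $r$ is the largest value in $\{0,\dots,a_{m+1}-1\}$ for which the intermediate fraction's denominator $(r+1)q_m+q_{m-1}$ is still $\le n+1$. This is precisely the condition \eqref{eq:m_and_r}: $(r+1)q_m+q_{m-1}\le n+1<(r+2)q_m+q_{m-1}$ (and when $r=a_{m+1}-1$ the right inequality reads $n+1<q_{m+1}+q_m$, consistent with $q_{m+1}\le n+1$ failing, matching the maximality of $m$; one should check the boundary case $r=a_{m+1}-1$ separately so that the definitions of $m$ and $r$ are unambiguous and cover all $n$). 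I would then invoke Theorem~\ref{thm:Hall}: to confirm that $\frac{p_m}{q_m}$ and $\frac{(r+1)p_m+p_{m-1}}{(r+1)q_m+q_{m-1}}$ are genuinely \emph{consecutive} in $E_n^{\mathcal F}$, it suffices to verify $q_m\le n+1$, $(r+1)q_m+q_{m-1}\le n+1$, that the cross-determinant equals $\pm1$ (immediate from \eqref{eq:Teo2Khinchin}), and that the sum of denominators $q_m+(r+1)q_m+q_{m-1}=(r+2)q_m+q_{m-1}>n+1$, which is exactly the right-hand inequality of \eqref{eq:m_and_r}.

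Finally I would verify that $x$ lies strictly between these two endpoints: by \eqref{eq:mediant_even}--\eqref{eq:mediant}, $x$ lies between $p_m/q_m$ and $p_{m+1}/q_{m+1}$ on the side determined by the parity of $m$, and the chain of intermediate fractions is monotone in $r$, so $x$ falls in the sub-interval bounded by the $r$-th and $(r+1)$-th of them; choosing $r$ as above places $x$ in $I_n^{\mathcal F}(x)$ as claimed, with the left/right order dictated by parity exactly as in the two cases of \eqref{eq:mediant_even} and \eqref{eq:mediant}. The length formula then follows immediately from $\bigl|\frac{p_m}{q_m}-\frac{(r+1)p_m+p_{m-1}}{(r+1)q_m+q_{m-1}}\bigr|=\frac{|q_m p_{m-1}-p_m q_{m-1}|}{q_m((r+1)q_m+q_{m-1})}=\frac{1}{q_m((r+1)q_m+q_{m-1})}$ via \eqref{eq:Teo2Khinchin}. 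The main obstacle I anticipate is the bookkeeping around the boundary case $r=a_{m+1}-1$ versus incrementing $m$: one must make sure the system \eqref{eq:m_and_r} has a unique solution for every $n$ and that the "neighbour towards $x$" is never $p_{m-1}/q_{m-1}$ itself nor $p_{m+1}/q_{m+1}$ prematurely — this requires carefully using the maximality of $m$ together with Theorem~\ref{thm:Hall} to exclude any other fraction of denominator $\le n+1$ sneaking in between.
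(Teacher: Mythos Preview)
Your plan matches the paper's proof: use the mediant inequalities \eqref{eq:mediant_even}--\eqref{eq:mediant} to place $x$ in the claimed interval, then apply Theorem~\ref{thm:Hall} together with \eqref{eq:Teo2Khinchin} (the cross-determinant is $\pm1$ and the sum of denominators is $(r+2)q_m+q_{m-1}>n+1$) to certify consecutiveness in $E_n^{\mathcal F}$. One small correction to your informal description of $m$: it is not in general the largest index with $q_m\le n+1$, but rather the unique index with $q_m+q_{m-1}\le n+1<q_{m+1}+q_m$ (this is exactly how the paper establishes uniqueness); your own boundary case $r=a_{m+1}-1$ already shows that $q_{m+1}\le n+1$ can hold while $m$, not $m+1$, is still the correct choice.
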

\begin{proof} The uniqueness of $m$ follows from the fact that \eqref{eq:m_and_r} implies
\begin{equation*} q_m+q_{m-1}\leq n+1<(a_{m+1}+1)q_m+q_{m-1}=q_{m+1}+q_m.
\end{equation*}
The uniqueness of $m$ in turn implies the uniqueness of $r$.

Notice that \eqref{eq:mediant_even} or  \eqref{eq:mediant} (according to whether $m$ is even or odd) implies that  $x$ belongs to the interval whose endpoints are the quotients $p_m/q_m$ and $((r+1) p_m+p_{m-1})/((r+1)q_m+q_{m-1})$. As we are assuming that $q_m\leq (r+1)q_m+q_{m-1}\leq n+1$, both quotients are endpoints of $F_n$. Moreover, Theorem~\ref{thm:Hall} implies that these quotients are consecutive endpoints of $F_n$ because $q_m+((r+1)q_m+q_{m-1})=(r+2)q_m+q_{m-1}>n+1$ and 
\begin{align*}
    ((r+1)p_m+p_{m-1})q_m-((r+1)q_m+q_{m-1})p_m=p_{m-1}q_m-q_{m-1}p_m=1
\end{align*}
by \eqref{eq:Teo2Khinchin}.\end{proof}
 
\begin{rmk} Notice, by comparing Propositions~\ref{prop:SternBrocotSB} and \ref{prop:FareyFF}, the intervals  
 in the Stern-Brocot $\mathcal{SB}$ and in the Farey  $\mathcal{F}$ sequences of  partitions      that contain a given $x$ have the same form, however  they might occur  at different  indices for their depth. 
 
\end{rmk}

\begin{proposition}
The Farey sequence of partitions admits as a.e.\ weight function with respect to the Lebesgue measure
\[ f_{\mathcal F}(n)=2\log n\quad\text{for each }n\geq 1. \]
\end{proposition}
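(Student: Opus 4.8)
The plan is to compute $|I_n^{\mathcal F}(x)|$ from the explicit formula of Proposition~\ref{prop:FareyFF} and to show that, for a.e.\ irrational $x$,
\[
  -\log|I_n^{\mathcal F}(x)| = 2\log n + o(\log n)\qquad(n\to\infty).
\]
Since $E^{\mathcal F}\subseteq\mathbb Q$ has Lebesgue measure zero, this is exactly what is required. Fix an irrational $x$; for each $n$ let $m=m(x,n)$ and $r=r(x,n)$ be as in \eqref{eq:m_and_r}, and set $Q_n:=(r+1)q_m+q_{m-1}$, so that Proposition~\ref{prop:FareyFF} gives $-\log|I_n^{\mathcal F}(x)|=\log Q_n+\log q_m$. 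I would then estimate $\log Q_n$ and $\log q_m$ in turn, each up to an error that will eventually be $o(\log n)$ a.e.

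For $Q_n$: the defining inequality \eqref{eq:m_and_r} gives $q_m\le Q_n\le n+1$ and $Q_n>(n+1)-q_m\ge(n+1)-Q_n$, so $(n+1)/2<Q_n\le n+1$ and hence $\log Q_n=\log n+O(1)$. For $q_m$: the upper bound $q_m\le Q_n\le n+1$ is immediate, while for the lower bound I use that $m\to\infty$ as $n\to\infty$ (since $q_{m+1}+q_m>n+1$), so that for large $n$ we have $m\ge1$, $q_{m-1}\le q_m$ and $r<a_{m+1}$; then $n+1<(r+2)q_m+q_{m-1}\le(a_{m+1}+2)q_m$, whence $q_m>(n+1)/(a_{m+1}+2)$. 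Putting these together,
\[
  -\log|I_n^{\mathcal F}(x)| = 2\log n - \theta_n + O(1),\qquad 0\le\theta_n\le\log\bigl(a_{m+1}+2\bigr).
\]

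It remains to verify that $\theta_n=o(\log n)$ for a.e.\ $x$, and this is the only genuine point. On the one hand, $q_m\le n+1$ together with the fact that the $q_k$ grow at least like the Fibonacci numbers, hence exponentially in $k$, forces $m=O(\log n)$. On the other hand, by the Borel--Bernstein Theorem (Theorem~\ref{theo:BB}), since $\sum_k k^{-2}<\infty$, for a.e.\ $x$ one has $a_k\le k^2$ for all large $k$ --- the same type of bound already invoked in the proof of Proposition~\ref{prop:fSB}. Combining, $\log a_{m+1}\le 2\log(m+1)=O(\log\log n)=o(\log n)$, so $\theta_n=o(\log n)$ and $-\log|I_n^{\mathcal F}(x)|/(2\log n)\to1$ a.e., which is precisely the assertion that $f_{\mathcal F}(n)=2\log n$ is an a.e.\ weight function of $\mathcal F$. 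The a.e.\ restriction is the expected (and mild) obstacle: it is unavoidable, because for $x$ whose partial quotients grow like $e^{k}$ one can pick $n$ just below $q_{m+1}+q_m$ for which $\theta_n$ is of order $\log n$, so $\mathcal F$ fails to be log-balanced along such $x$.
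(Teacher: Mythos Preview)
Your proof is correct and follows essentially the same route as the paper's: both sandwich $|I_n^{\mathcal F}(x)|$ via Proposition~\ref{prop:FareyFF}, reduce the error term to a bound on $a_{m+1}$ (the paper phrases it through $r<a_{m+1}$, you through $q_m>(n+1)/(a_{m+1}+2)$), and close with Borel--Bernstein together with $m=O(\log n)$. One small slip in your final commentary: if $a_k\sim e^k$ then $\log q_m\sim m^2/2$ while $\log a_{m+1}\sim m$, so $\theta_n\sim m\sim\sqrt{\log n}=o(\log n)$ and log-balancedness still holds along such $x$; to get $\theta_n$ of order $\log n$ you need something like $a_{k+1}\asymp q_k^s$ for some $s>0$, as in the construction of Proposition~\ref{prop:nobalanced}.
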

\begin{proof} Let $x\in[0,1]$ be  an irrational number. Let $m\geq 0$ as in Proposition~\ref{prop:FareyFF} (i.e.,
$(r+1)q_m+q_{m-1}\leq n+1<(r+2)q_m+q_{m-1}$ where $0\leq r<a_{m+1}$). Notice that $q_m\leq(r+1)q_m+q_{m-1}\leq n+1$. Notice also that $n+1<(r+2)q_m+q_{m-1}\leq(r+3)q_m$. Similarly, $n+1<(r+2)q_m+q_{m-1}\leq 2((r+1)q_m+q_{m-1})$. In this way, we proved that
\[ \frac 1{(n+1)^2}\leq\left\vert I_n^{\mathcal F}(x)\right\vert\leq\frac{2(r+3)}{(n+1)^2}. \]

It is well-known that $m=O(\log n)$ as $n\to\infty$ (since $2^{(m-1)/2}\leq q_m\leq n$). Hence, since Borel--Bernstein Theorem (Theorem~\ref{theo:BB}) ensures that $a_m=O(m(\log m)^2)$ as $m\to\infty$, for a.e., it follows that $r<a_{m+1}=O(\log n (\log\log n)^2)$ a.e. As a consequence,
\[ 
 -\log\vert I_n^{\mathcal{F}}(x)\vert= 2 \log (n+1)+ O(\log \log n)\quad\text{as $n\to\infty$,\quad a.e.\ }x,  \]
where the hidden constants in the $O$-term may depend on $x$. The result follows.\end{proof}

\subsubsection{The Farey sequence of partitions  and Sturmian words}\label{subsec:sturm}

The Farey sequence of partitions  has a  particular combinatorial meaning in symbolic dynamics  by  producing   prefixes of  so-called  characteristic Sturmian words. Indeed, 
given an irrational  real number $\alpha\in [0,1]$ consider the  Kronecker-Weyl sequence $ \{n\alpha\}_{ n \geq 1}$ (here $\{x\}$ is the fractional part of $x$).  Sturmian words  are obtained as 
 binary codings  of Kronecker-Weyl sequences, thus providing a numeration system as   introduced in Section \ref{subsec:num}  for  irrational numbers with digits in $\{0,1\}$. We describe it below.
 
 Let $\alpha$ be an irrational number in $[0,1]$. Consider the  two intervals $(0,1-\alpha)$ and $(1-\alpha,1)$. We define the sequence   $S(\alpha):=\{s_n(\alpha)\}_{ n \geq 1}$ in  $\{0,1\}^{\mathbb N}$  associated with  $\alpha$ as follows:
\[s_n(\alpha) =\begin{cases}

               0 & \hbox{ if } \{n\alpha\}\in (0,1- \alpha) \\
               1 & \hbox{ if }  \{n \alpha\} \in (1- \alpha,1).
              \end{cases}
\]
Since $\alpha$ is  assumed to be irrational,  observe that 
the sequence $\{n\alpha\}$  never takes the value $0$, nor $1-\alpha$.
We will use the notation
$S(\alpha)_{[1,n]}$ for the prefix $s_1 \cdots s_n$ of length $n$ of 
the sequence  $S(\alpha)$ (considered as an infinite word  over the alphabet $\{0,1\}$). The  sequence  $S(\alpha)$ is a so-called  {\em characteristic Sturmian word} (see  e.g. \cite[Chapter 2]{Lothaire}).
Sturmian words are among the most  studied  words in  word combinatorics and symbolic dynamics. 

  By \cite[Lemma 5]{VB:96},  irrational  numbers $\alpha$ that belong to a common interval of the partition $F_n$ have the same
 prefix  of length $n$ for the characteristic Sturmian word $s_n(\alpha)$,  while  these prefixes  differ if they belong to two
 distinct intervals of $F_n$.  Hence, with each interval  in the Farey partition $F_n$ of order $n$ 
is associated  the  prefix of length $n$  of some characteristic Sturmain word $S(\alpha)$.

More precisely, according to \cite{VB:96,Mignosi},  let  $\frac{p_1}{q_1}$  and $\frac{p_2}{q_2}$
be the endpoints of  an interval in $F_n$ with $\frac{p_1}{q_1} \neq 0$, $\frac{p_2}{q_2} \neq 1$ and $\frac{p_1}{q_1}<  \frac{p_2}{q_2}$.
Let $\alpha $ be an irrational number in $[\frac{p_1}{q_1}, \frac{p_2}{q_2}].$  If  $\alpha$ belongs to $[\frac{p_1}{q_1}, \frac{p_1+p_2}{q_1+q_2}]$, then $S(\alpha)_{[1,n+1]} = S(\alpha)_{[1,n]} 0$, and  if  $\alpha \in  [ \frac{p_1+p_2}{q_1+q_2},\frac{p_2}{q_2}]$,
then $S(\alpha)_{[1,n+1]} = S(\alpha)_{[1,n]} 1$. Moreover,  for all  irrational $ \alpha$ in $[\frac{p_1}{q_1}, \frac{p_2}{q_2}]$, $S(\alpha)]_{[1,n+1]}$ is a palindrome
 if and only if $q_1+q_2 =n+2$.
 This thus allows the definition of  a labelling  function   $\rho_n:F_n\to\{0,1\}$ such as introduced in 
 Section \ref{subsec:num} that maps each interval of $F_n$ to
 the last letter $s_{n+1}$   of the prefix  $S(\alpha)_{[1,n+1]}$ 
 of the characteristic Sturmian word $S(\alpha)$.
The labelling function  $\rho=\{\rho_n\}_{n \geq 1}$
of 
the Farey sequence of partitions thus  works as follows. 
We begin with the coding of  the unique interval in $F_0$ as the empty word. Then, there are two cases: when the mediant of an interval of $F_n$ can be added as an endpoint of $F_{n+1}$, then  this interval is subdivided and the coding of the two subintervals is made in the lexicographic order by adding a letter in $\{0,1\}$. When the mediant cannot be added, the interval is not subdivided, and the coding is extended in a unique way by adding the next letter in its palindromic completion. This is illustrated in Figure \ref{fig:s3}.
 \begin{figure}[ht]
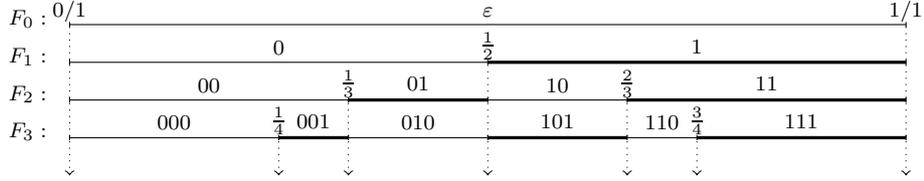

\palindomic{}
\caption{The labelling of the  Farey partition  $F_n$  for $n\le 3$.\label{fig:s3}}
\end{figure} 

\begin{rmk} \label{rmk:farey}Consider the case where we consider two identical sequences of partitions  ${\mathcal P}={\mathcal P}^1={\mathcal P}^2$  in Definition \ref{def:Lochs}, with ${\mathcal P}$  log-balanced (a.e.\ or in measure).
One can have $ L_n(x, {\mathcal P}, {\mathcal P})  > n$
if $I_n (x) =I_{n+1} (x)$. This is  for instance the case for  the  Farey partition. The construction of the coding using the  palindromic completion which shows that 
one  might have more digits  in an  interval 
of $F_n$   than the  $n+1$  letters  of   a prefix. For instance, $(1/(n+1),1/n)\in F_n$ and $L_n(x,\mathcal F,\mathcal F)=2n$ for every $x\in(1/(n+1),1/n)$.
\end{rmk}

\subsection{The three-distance  sequence of partitions}\label{subsec:3D}
The {\em three-distance sequence of partitions} $\threeda{\alpha}$ is   defined in terms of the corresponding endpoints of each partition $\threedna{\alpha}{n}$. 
The set $E_n^{\threeda{\alpha}}$ of endpoints of $\threedna{\alpha}{n}$ is given by $\{1\}\cup \{ (i  \alpha)\bmod 1 :\, 0 \leq  i \leq n\}$. We remark that, when $\alpha$ is irrational, these determine exactly $n+1$ intervals.
\subsubsection{Almost everywhere log-balancedness} 

It turns out, somewhat surprisingly, see \cite{Sos:1958,Suranyi:1958,Slater:1964} or the survey \cite{AlBe}, that the lengths of the intervals in $\threedna{\alpha}{n}$ can only take at most three different values. This classical result is known as the {\em three-distance theorem}, which we cite below. We make use of this theorem to derive the weight function of the three-distance partition for almost every $\alpha$. See \cite{CV:16} for a probabilistic study  of the lengths in the case where there are two distances,  \cite{PSZ:16} for a study of the distribution of the lengths when averaging over $\alpha$, and see  also  \cite{ZA:99}.
\begin{theorem}[The three-distance theorem]
\label{thm:3d}
Let $\alpha\in(0,1)$ an irrational number and $n$ be a positive integer. The
points $\{1\}\cup \{ (i  \alpha)\bmod 1:\, 0 \leq  i \leq n\}$
partition the unit interval $[0,1]$
into $n+1$ intervals, the lengths of which take at most three values,
one being the sum of the other two.

More precisely, let $\{p_k/q_k\}_{k\in\mathbb N}$ and $\{a_k\}_{k\in\mathbb N}$ be the
sequences of the convergents and partial quotients associated with $\alpha$
in its continued fraction expansion.
There exist unique integers $k$, $m$, and $r$ such that
\[ n=mq_k+q_{k-1}+r,\ k\geq 0,\ 1\leq m \leq a_{k+1},\text{ and }0\leq r < q_k. \]
Let $\eta_k= (-1)^k(q_k \alpha-p_k)
$ for every $k\in\mathbb N$. Then, the unit interval is divided by the points $\{\alpha\},\{2\alpha\}, \ldots,\{n\alpha\}$ into $n+1$ intervals which satisfy that: \begin{itemize}
\item
$n+1-q_k$ of them have length $\eta_k$ (which is the smallest of the three
lengths), \item
$r+1$ of them have length $\eta_{k-1}-m\eta_k$, and
\item
$q_k-(r+1)$ of them have
length $\eta_{k-1}-(m-1)\eta_k$ (which is the largest of the three lengths).
\end{itemize}
\end{theorem}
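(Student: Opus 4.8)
The plan is to prove Theorem~\ref{thm:3d} by the classical nearest‑neighbour argument on the circle $\mathbb R/\mathbb Z$, and then to convert the combinatorial picture into the explicit formulas by invoking the best–approximation properties of the continued fraction expansion of $\alpha$. First I would reduce the statement: since $\alpha$ is irrational, the $n+1$ points of $P:=\{\,\{i\alpha\}:0\le i\le n\,\}$ are pairwise distinct, and, identifying $1$ with $0$, they cut $[0,1]$ into exactly $n+1$ arcs. Hence it suffices to determine, for every $i\in\{0,\dots,n\}$, the index $\sigma(i)\in\{0,\dots,n\}$ for which $\{\sigma(i)\alpha\}$ is the immediate successor of $\{i\alpha\}$ when the circle is traversed counterclockwise; the arc between them then has length $\{(\sigma(i)-i)\alpha\}$. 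Observe also that $\sigma$ is a single $(n+1)$-cycle, because the circle is connected, so $\sigma$ restricted to any subset of indices is injective into the complement of its image.

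The combinatorial heart is as follows. Let $a,b\in\{1,\dots,n\}$ be the unique indices with $\{a\alpha\}=\min_{1\le i\le n}\{i\alpha\}$ and $1-\{b\alpha\}=\min_{1\le i\le n}(1-\{i\alpha\})$, and set $\ell^+:=\{a\alpha\}$, $\ell^-:=1-\{b\alpha\}$. I would first check $a+b\ge n+1$: if $a+b\le n$ then $\{(a+b)\alpha\}$ equals $\{a\alpha\}+\{b\alpha\}$ reduced modulo $1$, hence is either smaller than $\{a\alpha\}$ (when $\{a\alpha\}+\{b\alpha\}>1$) or larger than $\{b\alpha\}$ (when $\{a\alpha\}+\{b\alpha\}<1$), contradicting the choice of $a$ or of $b$. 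After replacing $\alpha$ by $1-\alpha$ if necessary — a reflection of the whole configuration which exchanges $a\leftrightarrow b$ and $\ell^+\leftrightarrow\ell^-$ — we may assume $\ell^+\le\ell^-$. Then, for every $i$ with $i+a\le n$ one has $\sigma(i)=i+a$ and the corresponding arc has length $\ell^+$: a point $\{j\alpha\}$ strictly inside that arc of length $\ell^+$ would give $\{(j-i)\alpha\}<\ell^+$ when $j>i$, impossible since $\{(j-i)\alpha\}\ge\{a\alpha\}=\ell^+$; and it would give $\{(i-j)\alpha\}>1-\ell^+\ge\{b\alpha\}$ when $j<i$, impossible since $\{(i-j)\alpha\}\le\{b\alpha\}$. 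One also checks by the same reasoning that no overflow index (one with $i+a>n$) has an arc of length $\ell^+$, because $a$ is the unique index achieving $\ell^+$ and $b$ the unique one achieving $\ell^-$. A somewhat more delicate analysis of the $a$ remaining indices $i\in\{n-a+1,\dots,n\}$ — which $\sigma$ maps bijectively onto $\{0,\dots,a-1\}$ — shows that each of their arcs has length $\ell^-$ or $\ell^++\ell^-$, the absence of intermediate points being forced by the minimality/maximality defining $a,b$ together with the standard lower bound $\lVert\ell\alpha\rVert\ge\eta_{k-1}$ for $0<\ell<q_k$. Already at this stage the qualitative part of the theorem is proved: the arc lengths lie in $\{\ell^+,\ell^-,\ell^++\ell^-\}$, with the largest being the sum of the other two.

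It then remains to count and to identify the constants. There are $n+1-a$ arcs of length $\ell^+$; writing $x$ for the number of arcs of length $\ell^++\ell^-$, the length identity $(n+1-a)\ell^+ + x(\ell^++\ell^-) + (a-x)\ell^- = 1$ together with the Farey–neighbour relation $a\ell^-+b\ell^+=1$ (itself a consequence of \eqref{eq:Teo2Khinchin}, since the two best approximants of $\alpha$ of denominators $a$ and $b$ from opposite sides are consecutive semiconvergents, hence of determinant $1$) forces $x=a+b-1-n$, so there are $n+1-b$ arcs of length $\ell^-$ and $a+b-1-n$ arcs of length $\ell^++\ell^-$. Finally, the classical description of one–sided best approximations (via the recurrences for $p_n,q_n$ from Section~\ref{subsubsec:cf} and the mediant inequalities \eqref{eq:mediant_even}--\eqref{eq:mediant}) identifies the two relevant record denominators not exceeding $n$ as $q_k$ and the intermediate denominator $mq_k+q_{k-1}$, where $k,m,r$ are exactly those of the statement and their uniqueness follows because the numbers $mq_k+q_{k-1}$, over admissible pairs $(k,m)$, enumerate the positive integers without repetition; the corresponding values of $\lVert\,\cdot\,\alpha\rVert$ are $\eta_k$ and $\eta_{k-1}-m\eta_k$, the parity of $k$ deciding which one is $\ell^+$ and which $\ell^-$ (this is where the replacement $\alpha\mapsto 1-\alpha$ is accounted for). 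Substituting $\{a,b\}=\{q_k,\,mq_k+q_{k-1}\}$ into the three counts, and using $a+b=(m+1)q_k+q_{k-1}$ and $n=mq_k+q_{k-1}+r$, they become $n+1-q_k$, $r+1$ and $q_k-(r+1)$ for the lengths $\eta_k$, $\eta_{k-1}-m\eta_k$ and $\eta_{k-1}-(m-1)\eta_k=\eta_k+(\eta_{k-1}-m\eta_k)$, which is precisely the asserted conclusion, the last identity displaying the additivity relation.

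The main obstacle I anticipate is the bookkeeping in the last two paragraphs: pinning down the arcs of the overflow indices — in particular those giving the longest length $\ell^++\ell^-$ — and checking that the three counts collapse exactly to $n+1-q_k$, $r+1$, $q_k-(r+1)$. Both rely on the fine structure of continued fractions, precisely the fact that $q_{k-1}$ is the optimal denominator below $q_k$ and the exact location of the intermediate fractions $jq_k+q_{k-1}$ (the same data underlying Propositions~\ref{prop:SternBrocotSB} and~\ref{prop:FareyFF}), and they are prone to parity and sign errors; by contrast, the reduction to nearest‑neighbour finding and the derivation of the three‑value/additivity structure are routine circle combinatorics. As an alternative route to the qualitative part, one may argue by induction on $n$: the point $\{(n+1)\alpha\}$ added in passing from $\threedna{\alpha}{n}$ to $\threedna{\alpha}{n+1}$ always lies in a longest arc and splits it into the two shorter lengths, which preserves the three‑value/additivity property.
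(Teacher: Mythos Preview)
The paper does not give its own proof of Theorem~\ref{thm:3d}: the three-distance theorem is quoted as a classical result, with references to \cite{Sos:1958,Suranyi:1958,Slater:1964} and the survey~\cite{AlBe}, and is then used as a black box in the analysis of the partitions $\threeda{\alpha}$ (Lemma~\ref{lem:3Dalpha-loglogn} and Proposition~\ref{prop:nobalanced}). There is therefore no proof in the paper to compare your proposal against.

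Your outline is the standard nearest-neighbour argument and is essentially sound. One suggestion: your treatment of the ``overflow'' indices is more circuitous than needed. The argument symmetric to the one you give for $\ell^+$ shows directly that for every $i\in\{b,\dots,n\}$ one has $\sigma(i)=i-b$ with arc length $\ell^-$; since you already proved $a+b\ge n+1$, the two index sets $\{0,\dots,n-a\}$ and $\{b,\dots,n\}$ are disjoint, and for the remaining $a+b-n-1$ indices $i\in\{n-a+1,\dots,b-1\}$ one checks $\sigma(i)=i+a-b$ with arc length $\ell^++\ell^-$ by the same minimality considerations. This yields the counts $(n+1-a,\;n+1-b,\;a+b-n-1)$ combinatorially, without invoking the total-length identity or the determinant relation $a\ell^-+b\ell^+=1$ as an input; the latter then becomes a corollary rather than a hypothesis, which is both cleaner and avoids the circularity of appealing to ``consecutive semiconvergents'' before you have identified $a$ and $b$. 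A second minor point: the WLOG reduction $\alpha\mapsto 1-\alpha$ is harmless for the qualitative part but blurs the final identification; it is tidier to keep track of the parity of $k$ explicitly, since for even $k$ one has $a=q_k$ and $b=mq_k+q_{k-1}$, and conversely for odd $k$, which makes the substitution into the three counts transparent.
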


There is an interesting  connection  between   this theorem and 
frequencies of  words that occur in a   characteristic Sturmian word (see Section \ref{subsec:sturm}). Indeed,
the  lengths  of intervals in Theorem \ref{thm:3d} coincide with the frequencies of factors  of length $n+1$ \cite{VB:96}.  We will repeatedly use the following result.

\begin{theorem}[{\cite[Theorems~9 and 13]{khinchin}}]\label{thm:Khinchin9and13} Let $\alpha\in(0,1)$ an irrational and $\{p_k/q_k\}_{k\in\mathbb N}$ its sequence of convergents. If $k\in\mathbb N$ and $\eta_k=(-1)^k(q_k\alpha-p_k)$, then
\[\frac{1}{q_{k+1}+q_k}<\eta_k <\frac{1}{q_{k+1}}. \]
\end{theorem}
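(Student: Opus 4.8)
The plan is to obtain an exact closed form for $\eta_k$ in terms of the $(k+1)$-st complete quotient of $\alpha$, and then to sandwich that complete quotient between two consecutive integers.

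First I would recall the standard representation of a number through its convergents and complete quotients. Writing $\alpha=[0;a_1,a_2,\dots]$ and letting $\alpha_{k+1}:=[a_{k+1};a_{k+2},\dots]$ denote the $(k+1)$-st complete quotient, one has, for every $k\geq 0$,
\[
\alpha=\frac{\alpha_{k+1}\,p_k+p_{k-1}}{\alpha_{k+1}\,q_k+q_{k-1}};
\]
this follows by a straightforward induction on $k$ from the recurrences for $p_k$ and $q_k$ (see~\cite{khinchin}), and for $k=0$ it reduces, using $p_{-1}=1,\,q_{-1}=0,\,p_0=0,\,q_0=1$, to $\alpha=1/\alpha_1$. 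Substituting this representation into $q_k\alpha-p_k$ and clearing the denominator, the numerator becomes $\alpha_{k+1}(q_kp_k-p_kq_k)+(q_kp_{k-1}-p_kq_{k-1})$; the coefficient of $\alpha_{k+1}$ vanishes and the remaining term equals $(-1)^k$ by~\eqref{eq:Teo2Khinchin}. Therefore
\[
q_k\alpha-p_k=\frac{(-1)^k}{\alpha_{k+1}\,q_k+q_{k-1}},
\qquad\text{hence}\qquad
\eta_k=(-1)^k(q_k\alpha-p_k)=\frac{1}{\alpha_{k+1}\,q_k+q_{k-1}}.
\]

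Next I would bound $\alpha_{k+1}$. Since $\alpha$ is irrational, its continued fraction expansion is infinite, so $\alpha_{k+1}=a_{k+1}+1/\alpha_{k+2}$ with $\alpha_{k+2}>1$, whence $a_{k+1}<\alpha_{k+1}<a_{k+1}+1$. Combining these strict inequalities with the denominator recurrence $q_{k+1}=a_{k+1}q_k+q_{k-1}$ and with $q_k>0$ gives the chain
\[
q_{k+1}=a_{k+1}q_k+q_{k-1}<\alpha_{k+1}q_k+q_{k-1}<(a_{k+1}+1)q_k+q_{k-1}=q_{k+1}+q_k .
\]
All three quantities are positive, so taking reciprocals reverses the inequalities; in view of the formula for $\eta_k$ obtained above, this is precisely $\tfrac{1}{q_{k+1}+q_k}<\eta_k<\tfrac{1}{q_{k+1}}$, as claimed.

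There is no genuine obstacle here: the only points requiring attention are the complete-quotient representation of $\alpha$ used in the first step and the \emph{strictness} of the bounds $a_{k+1}<\alpha_{k+1}<a_{k+1}+1$, both of which are standard and rest only on the infiniteness of the expansion of an irrational number; one must also keep the index conventions $p_{-1}=1$, $q_{-1}=0$, $p_0=0$, $q_0=1$ in mind so that the case $k=0$ is correctly included.
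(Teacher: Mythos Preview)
Your argument is correct. The paper does not supply its own proof of this statement; it is quoted as Theorems~9 and~13 from Khinchin's book and used as a black box. Your derivation via the complete-quotient representation $\alpha=(\alpha_{k+1}p_k+p_{k-1})/(\alpha_{k+1}q_k+q_{k-1})$, the identity \eqref{eq:Teo2Khinchin}, and the strict bounds $a_{k+1}<\alpha_{k+1}<a_{k+1}+1$ is exactly the classical proof found in Khinchin, so there is nothing to compare beyond noting that you have reproduced the standard argument faithfully, including the care needed for strictness and for the base case $k=0$.
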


\subsubsection{Weight function for most three-distance sequences of partitions $\threeda{\alpha}$ }
We will prove that, for almost every  $\alpha$, the three-distance   sequence of partitions $\threeda{\alpha}$ is a.e.\ log-balanced as a consequence of the following lemma.

\begin{lemma}\label{lem:3Dalpha-loglogn}
For almost every $\alpha$ in $(0,1)$, the three-distance sequence of partitions $\threeda{\alpha}$ satisfies
\[
-\log |I^{\threeda\alpha}_n(x)| = \log n + O(\log \log n)\text{ as }n\to\infty\,,\quad\text{for every }x\not \in E^{\threeda{\alpha}},
\]
where the hidden constant in the $O$-term might depend on $\alpha$, but can be chosen so that the result holds for every $x\not \in E^{\threeda{\alpha}}$. Moreover, the limit $-\log|I_n^{\threeda{\alpha}}(\alpha)|/\log n\to 1$ as $n \to \infty$ holds for any irrational $\alpha\in(0,1)$ so that $\log a_{k}(\alpha)=o(k)$ as $k\to \infty$.
\end{lemma}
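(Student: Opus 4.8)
The plan is to read off, from the three-distance theorem (Theorem~\ref{thm:3d}) together with Khinchin's estimates for $\eta_k$ (Theorem~\ref{thm:Khinchin9and13}), uniform two-sided bounds for the length of \emph{every} interval of $\threedna{\alpha}{n}$ in terms of two consecutive denominators of convergents of $\alpha$, and then to convert the index of those denominators back into $n$. Fix an irrational $\alpha$ and $n$ large; Theorem~\ref{thm:3d} produces integers $k=k(n)\geq 1$, $m$, $r$ with $n=mq_k+q_{k-1}+r$, $1\leq m\leq a_{k+1}$, $0\leq r<q_k$, and asserts that every interval at depth $n$ has length between the smallest value $\eta_k$ and the largest value $\eta_{k-1}-(m-1)\eta_k$; since $m\geq 1$, the latter is at most $\eta_{k-1}$. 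By Theorem~\ref{thm:Khinchin9and13}, $\eta_k>1/(q_{k+1}+q_k)$ and $\eta_{k-1}<1/q_k$, so for every $x\notin E^{\threeda{\alpha}}$,
\[
  \frac{1}{q_{k+1}+q_k}<\bigl|I_n^{\threeda{\alpha}}(x)\bigr|<\frac{1}{q_k},
  \qquad\text{hence}\qquad
  \log q_k<-\log\bigl|I_n^{\threeda{\alpha}}(x)\bigr|<\log(2q_{k+1}).
\]
These inequalities are uniform in $x$, which is exactly why the conclusion can be stated for all $x\notin E^{\threeda{\alpha}}$ at once.

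Next I would bound $q_k$ and $q_{k+1}$ in terms of $n$ and $a_{k+1}$. From $m\geq 1$ one has $q_k\leq n$, hence $q_{k+1}=a_{k+1}q_k+q_{k-1}\leq(a_{k+1}+1)q_k\leq(a_{k+1}+1)n$; and from $n=mq_k+q_{k-1}+r\leq q_{k+1}+q_k-1<2q_{k+1}<2(a_{k+1}+1)q_k$ one gets $q_k>n/(2(a_{k+1}+1))$. Plugging these into the previous display yields
\[
  \bigl|{-\log|I_n^{\threeda{\alpha}}(x)|}-\log n\bigr|<\log\bigl(2(a_{k+1}+1)\bigr)\leq\log a_{k+1}+2,
\]
so the whole statement reduces to estimating $\log a_{k+1}$ with $k=k(n)$. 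Since $q_k\geq\phi^{k-1}$ (Fibonacci bound) and $q_k\leq n$, we have $k=O(\log n)$; moreover each value of $k$ corresponds to only finitely many $n$, so $k(n)\to\infty$ as $n\to\infty$. For almost every $\alpha$, the Borel--Bernstein Theorem (Theorem~\ref{theo:BB}), used exactly as in the proof of Proposition~\ref{prop:fSB}, gives $a_k\leq k(\log k)^2$ for all large $k$; hence $\log a_{k+1}=O(\log k)=O(\log\log n)$, the threshold (and so the hidden constant) depending on $\alpha$ but not on $x$. This is the first assertion.

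For the ``moreover'' part one discards Borel--Bernstein and only assumes $\log a_k(\alpha)=o(k)$. Since $k=k(n)\to\infty$ and $k=O(\log n)$, the bound $\log a_{k+1}=o(k)$ forces $\log a_{k+1}=o(\log n)$, so the last display gives $-\log|I_n^{\threeda{\alpha}}(x)|=\log n+o(\log n)$ and therefore $-\log|I_n^{\threeda{\alpha}}(x)|/\log n\to 1$; this holds even at $x=\alpha$, which is an endpoint of every $\threedna{\alpha}{n}$, since any interval incident to $\alpha$ still has length between $\eta_k$ and $\eta_{k-1}$, so the bounds above apply verbatim. The only genuinely delicate point, and where I expect to take most care, is the bookkeeping linking the continued-fraction index $k$ to the depth $n$: one must check that $k\to\infty$, that $k=O(\log n)$ (so an $o(k)$ bound on $\log a_{k+1}$ is also $o(\log n)$), and that the index $k+1$ appearing on both sides of the estimate is the same; granting this, the lemma is a routine combination of Theorems~\ref{thm:3d}, \ref{thm:Khinchin9and13}, and~\ref{theo:BB}.
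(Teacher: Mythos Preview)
Your proof is correct and follows essentially the same route as the paper: bound every interval length between $\eta_k$ and $\eta_{k-1}$ via Theorem~\ref{thm:3d}, convert these to $1/(q_{k+1}+q_k)$ and $1/q_k$ via Theorem~\ref{thm:Khinchin9and13}, relate $q_k,q_{k+1}$ to $n$ up to a factor involving $a_{k+1}$, and then control $\log a_{k+1}$ through Borel--Bernstein (for the a.e.\ statement) or the hypothesis $\log a_k=o(k)$ (for the ``moreover''). The only cosmetic differences are that the paper writes the defining range for $k$ as $q_k+q_{k-1}\le n<q_{k+1}+q_k$ (equivalent to your three-distance decomposition), uses the cruder Borel--Bernstein bound $a_{k+1}\le k^2$ rather than $k(\log k)^2$, and does not comment on the endpoint case $x=\alpha$.
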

\begin{proof} Let $\alpha$ be an irrational in $(0,1)$. We keep the notation introduced in Theorem~\ref{thm:3d}. In particular, $\eta_k:=| q_k(\alpha)\cdot\alpha-p_k(\alpha)|$ for each $k\in\mathbb N$.

Let $n$ be a positive integer and choose $k\in\mathbb N$ so that
\[
    q_k+q_{k-1}\leq n < q_{k+1}+q_k.
\]
(we recall $q_0=1$ and  $q_{-1}=0$).
Let $x\in[0,1]\setminus E^{\threeda{\alpha}}$. By Theorem~\ref{thm:3d},
\[ \eta_{k}\leq |I^{\threeda\alpha}_n(x)|\leq \eta_{k-1}. \] 
By Theorem~\ref{thm:Khinchin9and13}, $\log q_{k+1}<-\log \eta_k<\log (q_{k+1} + q_k)$ and we obtain
\[  \log q_k<-\log\eta_{k-1}\leq-\log\vert I^{\threeda{\alpha}}_n(x)\vert\leq-\log\eta_k<\log(q_{k+1}+q_k). \]
Since $q_{k+1}+q_k = (a_{k+1}+1)q_k + q_{k-1}
<(a_{k+1}+2)q_k\leq (a_{k+1}+2)n$, 
\begin{align*}
    -\log\vert I^{\threeda{\alpha}}_n(x)\vert
     \leq\log(q_{k+1}+q_k)<\log n+\log(a_{k+1}+2).
\end{align*}
But we also obtain $q_k>(q_{k+1}+q_k)/(a_{k+1}+2)>n/(a_{k+1}+2)$, which implies
\begin{align*}
    -\log\vert I^{\threeda{\alpha}}_n(x)\vert
    \geq\log q_k
    >\log n-\log (a_{k+1}+2).
\end{align*}
The Borel--Bernstein Theorem (Theorem \ref{theo:BB}) implies that, for a.e.\ $\alpha$, $a_{k+1}\leq k^2$ for $k$ large enough. Moreover, $k=O(\log n)$ as $n\to\infty$ for every irrational $\alpha\in(0,1)$, as $2^{(k-1)/2}\leq q_k(\alpha)\leq n$.
Therefore, %
$\log(a_{k+1}+2)=O(\log\log n)$ as $n\to\infty$, for a.e.\ $\alpha$. This proves the first assertion of the lemma.

The second assertion of the lemma follows from the fact that if $\log(a_{k})=o(k)$ as $k\to\infty$, then $\log(a_{k+1}+2)= o(\log n)$ as $n\to\infty$.\end{proof}

We have the following consequence of Lemma~\ref{lem:3Dalpha-loglogn}.

\begin{proposition}\label{prop:weightthreeda} For almost every $\alpha$ in $(0,1)$, the three-distance  sequence of partitions $\threeda{\alpha}$ is a.e.\ log-balanced with respect to the Lebesgue measure with weight function
\[ f_{\threeda{\alpha}}(n)=\log n\quad\text{for each }n\geq 1. \]
More specifically, it holds for any irrational $\alpha\in(0,1)$ whose continued fraction expansion satisfies $\log a_k(\alpha)=o(k)$ as $k\to\infty$.
\end{proposition}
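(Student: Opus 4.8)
The plan is to obtain this proposition as an immediate corollary of Lemma~\ref{lem:3Dalpha-loglogn}, by matching its conclusion against the defining condition of log-balancedness in Definition~\ref{def:balanced}. The work splits into three short verifications.

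First I would check the two structural hypotheses of Definition~\ref{def:balanced} for $\lambda$ the Lebesgue measure. The candidate weight function $f_{\threeda{\alpha}}(n) = \log n$ clearly satisfies $f_{\threeda{\alpha}}(n) \to +\infty$. For the requirement $\lambda(E^{\threeda{\alpha}}) = 0$, I would observe that the endpoint set $E^{\threeda{\alpha}} = \{1\} \cup \{(i\alpha) \bmod 1 : i \geq 0\}$ is countable, hence Lebesgue-null; note this holds for every irrational $\alpha$.

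Next, I would fix an irrational $\alpha \in (0,1)$ to which Lemma~\ref{lem:3Dalpha-loglogn} applies — which, by that lemma, is the case for almost every $\alpha$, and more precisely for every $\alpha$ with $\log a_k(\alpha) = o(k)$ as $k \to \infty$. For such $\alpha$ and every $x \notin E^{\threeda{\alpha}}$, the lemma gives $-\log|I_n^{\threeda{\alpha}}(x)| = \log n + O(\log\log n)$ as $n \to \infty$, so dividing by $\log n$ yields
\[ \frac{-\log |I_n^{\threeda{\alpha}}(x)|}{\log n} = 1 + O\!\left(\frac{\log\log n}{\log n}\right) \longrightarrow 1 \quad\text{as } n \to \infty. \]
Since this holds for every $x \notin E^{\threeda{\alpha}}$ — in particular for $\lambda$-a.e.\ $x$ — this is exactly the statement that $\threeda{\alpha}$ is a.e.\ log-balanced with weight function $f_{\threeda{\alpha}}(n) = \log n$, proving the proposition.

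Finally, for the ``more specifically'' clause, I would note that the almost-everywhere statement is itself recovered from the refined one: by the Borel--Bernstein Theorem (Theorem~\ref{theo:BB}), for a.e.\ $\alpha$ one has $a_k(\alpha) \leq k^2$ for all large $k$, whence $\log a_k(\alpha) \leq 2\log k = o(k)$. I do not expect any real obstacle here: all the analytic content — the three-distance theorem, the estimates on $\eta_k$, and the Borel--Bernstein control of the partial quotients — has already been absorbed into Lemma~\ref{lem:3Dalpha-loglogn}, so the only genuine checks are that $E^{\threeda{\alpha}}$ is null and that the $O(\log\log n)$ error is negligible against the normalisation $\log n$.
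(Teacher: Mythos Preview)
Your proposal is correct and matches the paper's approach exactly: the paper states the proposition as a direct consequence of Lemma~\ref{lem:3Dalpha-loglogn}, and you have simply spelled out the (routine) verification that the lemma's conclusion plugs into Definition~\ref{def:balanced}. One minor point of precision: for $\alpha$ satisfying only $\log a_k(\alpha)=o(k)$, the lemma as stated guarantees the limit $-\log|I_n^{\threeda{\alpha}}(x)|/\log n\to 1$ rather than the sharper $O(\log\log n)$ error term (the latter comes from the Borel--Bernstein bound $a_k\leq k^2$), but this does not affect the argument since only the limit is needed.
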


\begin{rmk}
The above result holds, for instance, for $\alpha=\phi^{-2}$, where $\phi$ denotes the golden ratio, and for $\alpha=e-2= [0;  \overline{1,2k,1}_{k=1}^{\infty}]$ (see e.g., \cite{Cohn}). %
\end{rmk}

\subsubsection{Sequences of partitions which are not log-balanced}\label{subsubsec:3D}

Next proposition will provide  an uncountable family of numbers $\alpha$ for which the corresponding sequence of partitions $\threeda{\alpha}$ is not log-balanced. 

\begin{proposition}\label{prop:nobalanced} Fix any real number $s>0$. We define 
$\alpha=\alpha(s)\in (0,1)$ as the number whose continued fraction expansion satisfies the following recurrence relation
\[a_1(\alpha)=1\quad\text{and}\quad a_{k+1}(\alpha)=\lceil q_k^s(\alpha)\rceil\quad\text{for each }k\ge 1.  \]
Then, the corresponding $\threeda{\alpha}$
sequence of partitions is not log-balanced in measure (thus, neither %
a.e.\ log-balanced) with respect to the Lebesgue measure. 

\end{proposition}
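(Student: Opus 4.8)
The plan is to argue by contradiction. Suppose $\threeda{\alpha}$ admits a weight function $f$ in measure with respect to the Lebesgue measure. The idea is to exhibit, for infinitely many depths $n$, a splitting of $[0,1]$ into two sets of Lebesgue measure both tending to $\tfrac12$ on which $|I_n(\cdot)|$, hence $-\log|I_n(\cdot)|$, is constant but takes two values whose ratio is bounded below by $s+1>1$; since $-\log|I_n(x)|/f(n)$ cannot be close to $1$ on both sets at once, this contradicts the definition of a weight function in measure. The mechanism behind the splitting is that the partial quotients of $\alpha=\alpha(s)$ grow so fast that $\log q_{k+1}\ge\log a_{k+1}+\log q_k\ge(s+1)\log q_k$ for every $k\ge 1$, because $a_{k+1}=\lceil q_k^s\rceil\ge q_k^s$.

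For each large $k$ I would take the depth $n_k:=\lfloor a_{k+1}/2\rfloor\,q_k+q_{k-1}$, which in the notation of the three-distance theorem (Theorem~\ref{thm:3d}) corresponds to $m=\lfloor a_{k+1}/2\rfloor$ and $r=0$ at level $k$, with $1\le m<a_{k+1}$ once $k$ is large. Then $\threedna{\alpha}{n_k}$ has $n_k+1-q_k$ intervals of the short length $\eta_k$, exactly one interval of length $\eta_{k-1}-m\eta_k$, and $q_k-1$ intervals of length $\eta_{k-1}-(m-1)\eta_k$; write $A_k$ and $B_k$ for the unions of the first and the last families. Using the recurrences $q_{k+1}=a_{k+1}q_k+q_{k-1}$ and $\eta_{k+1}=\eta_{k-1}-a_{k+1}\eta_k$, the identity $q_k\eta_{k-1}+q_{k-1}\eta_k=1$, and the Khinchin bounds $1/(q_{k+1}+q_k)<\eta_k<1/q_{k+1}$ of Theorem~\ref{thm:Khinchin9and13}, a short computation gives $|A_k|\to\tfrac12$ and $|B_k|\to\tfrac12$ as $k\to\infty$ (the lone middle interval has length at most $\eta_{k-1}<1/q_k$, so it is negligible in the limit).

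Next I would record two size estimates. On $A_k$, $|I_{n_k}(x)|=\eta_k<1/q_{k+1}$, so $-\log|I_{n_k}(x)|>\log q_{k+1}\ge(s+1)\log q_k$; and on $B_k$, $|I_{n_k}(x)|=\eta_{k-1}-(m-1)\eta_k=\eta_{k+1}+(a_{k+1}-m+1)\eta_k>(a_{k+1}/2)\eta_k>1/(4q_k)$ (using the lower Khinchin bound together with $q_{k+1}+q_k\le 2a_{k+1}q_k$), so $-\log|I_{n_k}(x)|<\log q_k+\log 4$. Crucially, $-\log|I_{n_k}(\cdot)|$ is constant on each of $A_k$ and $B_k$. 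Now fix $\epsilon\in(0,s/(s+2))$, so that $(s+1)(1-\epsilon)>1+\epsilon$. If $f$ were a weight function in measure, then for all large $k$ the set $\bigl\{x:\,\bigl|{-\log|I_{n_k}(x)|}/f(n_k)-1\bigr|>\epsilon\bigr\}$ would have measure $<\tfrac13$, hence could contain neither all of $A_k$ nor all of $B_k$; as $-\log|I_{n_k}(\cdot)|$ is constant on each, this forces $f(n_k)\ge(s+1)\log q_k/(1+\epsilon)$ and $f(n_k)\le(\log q_k+\log 4)/(1-\epsilon)$. Combining, $\bigl[(s+1)(1-\epsilon)-(1+\epsilon)\bigr]\log q_k\le(1+\epsilon)\log 4$, which is impossible for large $k$ since the bracket is a positive constant while $\log q_k\to\infty$. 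Thus no weight function in measure exists, so $\threeda{\alpha}$ is not log-balanced in measure, and a fortiori not log-balanced a.e.\ (since a.e.\ convergence implies convergence in measure on a probability space). That $\alpha(s)$ is a well-defined irrational in $(0,1)$ follows from its infinite continued fraction expansion, and $s\mapsto\alpha(s)$ is injective — so $\{\alpha(s):s>0\}$ is uncountable — because equal values of $\alpha$ would force the continuants $q_k$ to agree, contradicting $\lceil q_k^s\rceil=\lceil q_k^{s'}\rceil$ for large $k$ when $s\ne s'$.

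The main delicate point I anticipate is the choice of $n_k$: one must take $m$ proportional to $a_{k+1}$ yet strictly between $1$ and $a_{k+1}$, since near $m=1$ the $\eta_k$-class carries negligible Lebesgue measure, whereas at $m=a_{k+1}$ the three-distance description degenerates (there $\eta_{k-1}-m\eta_k=\eta_{k+1}$ is actually \emph{smaller} than $\eta_k$), so neither extreme yields the required balanced split. With $m\asymp a_{k+1}$ fixed, the remaining verifications are routine estimates with the Khinchin inequalities.
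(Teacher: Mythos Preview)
Your argument is correct and follows essentially the same strategy as the paper: pick depths $n_k$ with $m\approx a_{k+1}/2$ so that the $\eta_k$–intervals and the ``long'' intervals each carry Lebesgue mass bounded away from $0$ and $1$, then use the gap $\log q_{k+1}\ge (s+1)\log q_k$ to force two incompatible values of $-\log|I_{n_k}(\cdot)|$. The only cosmetic differences are that the paper takes $r_k=q_k-1$ (yielding exactly two lengths) whereas you take $r=0$ (three lengths, with the middle one negligible), and the paper normalizes by $\log n_k$ to exhibit the two limit values $1$ and $1/(s+1)$ explicitly, while you derive the contradiction directly from the hypothetical weight function $f$; neither change affects the substance.
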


\begin{proof}
We will prove that
\begin{align*}
\frac{-\log|I^{\threeda{\alpha}}_{n}(x)|}{\log n}\end{align*}
does not converge in measure with respect to the Lebesgue measure as $n\to\infty$. For this purpose, let us define $\{{n}_k\}_{k\in \mathbb N}$ as follows: for each $k\in\mathbb N$,
\begin{equation*}{n}_k=m_kq_k+q_{k-1}+ r_k, \quad \text{with}\quad m_k=\left\lceil \frac{a_{k+1}}{2}\right\rceil\quad \text{and}\quad r_k=q_k-1.
\end{equation*}
According to the three-distance theorem, there are only two different lengths among the intervals in $\threedna{\alpha}{n_k}$. More precisely, these $n_k+1$ intervals can be partitioned into two types:
\begin{itemize}
    \item $n_k+1-q_{k}$ intervals of length $\eta_k$ and
    \item $r_k+1=q_k$ intervals of length $\Delta_k:=\eta_{k-1}-m_k\eta_k$.
\end{itemize}
On the basis of this remark, we will prove that for our choice of $\{n_k\}_{k\in \mathbb N}$ and  for each $x\in [0,1]\setminus E^{\threeda{\alpha}}$, one has:
\begin{equation}\label{eq:g(x,k)+O(1/k)}
    \frac{-\log|I^{\threeda{\alpha}}_{n_k}(x)|}{\log n_k}=g(x,k)
    +O\left(\frac1{k}\right)\quad\text{as }k\to\infty,
\end{equation}
where
\begin{align*}
    g(x,k):=\begin{cases}
      1&\text{if } |I^{\threeda{\alpha}}_{n_k}(x)|=\eta_k,\smallskip\\
      \dfrac{1}{s+1}& \text{if }  |I^{\threeda{\alpha}}_{n_k}(x)|=\Delta_k,
    \end{cases}
\end{align*}
and the constant hidden in the $O$-term does not depend on $x$.

In order to estimate the three quantities
$n_k$, $\eta_k$, and $\Delta_k$ by a convenient power of $q_k$, we will bound $m_k$ as follows:
\begin{equation}\label{Eq:boundmk}\frac{1}{2}q_k^s\le \frac{a_{k+1}}{2}\le m_k<\frac{a_{k+1}}{2}+1< \frac{1}{2}(q_k^{s}+1)+1=\frac{1}{2}q_k^{s}\left(1+\frac{3}{q_k^s}\right).
\end{equation}
Hence, each depth $n_k$ satisfies
\begin{equation}\label{Eq:depth}\frac{1}{2}q_k^{s+1}\le n_k<m_kq_k+2q_k= %
\frac{1}{2}q_k^{s+1}\left(1+\frac{7}{q_k^{s}}\right).\end{equation}
Now, we bound the distance $\eta_k$. Since $q_k^s\leq a_{k+1}<q_k^s+1$, the continuant $q_{k+1}$ satisfies
\begin{align*}
    q_{k+1}&< (q_{k}^s+1)q_k+q_{k-1}<(q_{k}^s+1)q_k+q_k=q_k^{s+1}\left(1+\frac{2}{q_k^s}\right)\quad\text{and} \\
    q_{k+1}&> q_k^{s+1}. 
\end{align*}
Thus, by Theorem~\ref{thm:Khinchin9and13}, each length $\eta_k$ satisfies
\begin{equation}\label{Eq:distance1}\frac{q_k^{-(s+1)}}{1+{3}/{q_k^s}}\le 
\frac{1}{q_{k+1}+q_k}<\eta_k <\frac{1}{q_{k+1}}\le {q_k^{-(s+1)}}.
\end{equation}
Inequalities \eqref{Eq:depth} and \eqref{Eq:distance1} prove that $\log n_k=(s+1)\log q_k+ O(1)$ and also $-\log \eta_k=(s+1)\log q_k+ O(1)$ as $k\to\infty$. Since $q_k=q_k(\alpha)$ has, at least,  exponential growth with respect to $k$, then
\begin{equation}\label{eq:eta-n}
    \frac{-\log\eta_k}{\log n_k}
    =1+O\left(\frac 1k\right)\quad\text{as }k\to\infty.
\end{equation}

In order to bound the length $\Delta_k$, we remark that the above upper bound on $\eta_k$ and the upper bound on $m_k$ given in  \eqref{Eq:boundmk} yield
\begin{equation}\label{eq:m_keta_k-upper}
    m_k\eta_k< \frac{q_k^{-1}}{2}\left(1+\frac{3}{q_k^{s}}\right).
\end{equation}
By construction, the sequence of partial quotients $\{a_k\}$ tends to infinity as $k$ does. Hence, it is possible to choose a large enough $k_0\in\mathbb N$  so that  $q_{k-1}/q_k\le 1/3$. Without loss of generality, we assume $k_0$ is large enough so that $3/q_k^s\le 1/4$  for any $k\ge k_0$. For such a $k$, it follows by Theorem~\ref{thm:Khinchin9and13} that
\begin{align*}
\Delta_k&=\eta_{k-1}-m_k\eta_k<q_k^{-1}\quad\text{and}\\
\Delta_k&\ge q_k^{-1}\left(\frac{1}{1+\dfrac{q_{k-1}}{q_k}}\right)-\frac{q_k^{-1}}{2}\left(1+\frac{3}{q_k^{s}}\right)\ge %
\frac 18q_k^{-1}.
\end{align*}
These inequalities prove that $-\log\Delta_k=\log q_k+O(1)$ but, according to \eqref{Eq:depth}, $-\log n_k=(s+1)\log q_k+O(1)$, as $k\to\infty$. Thus, as the continuants $q_k$ grow at least exponentially with $k$, this yields
\begin{equation}\label{eq:Delta-n}
    \frac{-\log\Delta_k}{\log n_k}
    =\frac 1{s+1}+O\left(\frac 1k\right)\quad\text{as }k\to\infty.
\end{equation}

Equation \eqref{eq:g(x,k)+O(1/k)} now follows from \eqref{eq:eta-n} and~\eqref{eq:Delta-n}. Notice that \eqref{eq:g(x,k)+O(1/k)} implies that there are three possibilities for the sequence  ${-\log|I_{n_k}^{\threeda{\alpha}}(x)|}/{\log n_k}$ depending on $x\in[0,1]\setminus E^{\threeda{\alpha}}$: (i) it has a limit equal to 1; (ii) it has a limit equal to $1/(s+1)$; or (iii) it has no limit. 
To end the proof of this proposition, we now show that  there exists $\epsilon>0$ so that  the measures of the sets
\[F_{k,\epsilon}=\left\{x\in [0,1]\setminus E^{\threeda{\alpha}}:\, \left|\frac{-\log|I_{n_k}^{\threeda{\alpha}}(x)|}{\log n_k}-1\right|< \varepsilon\right\} \]
 belong to $\left[\frac 14,\frac 34\right]$
for $k$ large enough. Consider any  $\epsilon$ with $0<\epsilon<{1}/{(2(s+1))}$. Because of \eqref{eq:g(x,k)+O(1/k)},  there is a $k_1\in \mathbb N$ so that,  for each $k\ge k_1$, the set $F_{k,\epsilon}$ coincides with the set $G_k$ given by 
\[G_{k}=\left\{x\in [0,1]\setminus E^{\threeda{\alpha}}:\,|I^{\threeda{\alpha}}_{n_k}(x)|=\eta_k\right\}.\]
The number of intervals of length $\eta_k$ is $n_k+1-q_k$. Thus, one  has
\[|G_k|=(n_k+1-q_k)\eta_k=(m_kq_k+q_{k-1})\eta_k.\]
The bounds on $m_k\eta_k $ and $\eta_k$ in \eqref{eq:m_keta_k-upper} and \eqref{Eq:distance1} yield, for $k$ large enough so that $5/q_k^{s}\le 1/2$,
\begin{align*}
    |G_k|&\le (m_k+1)\eta_kq_k\le \frac{1}{2}q_k^{-1}\left(1+\frac 5{q_k^{s}}\right)q_k\le \frac 34\quad\text{and}\\
    |G_k|&\ge m_kq_k\eta_k\ge \frac{1}{2}q_k^{s+1}\left(\frac{q_k^{-s-1}}{1+3/q_k^s}\right)\ge\frac 14. 
\end{align*}
Finally, we have proved that, for each $\epsilon$ with $0<\varepsilon<{1}/{(2(s+1))}$ and sufficiently large $k$, the following holds $\left|F_{k,\epsilon}\right|=|G_k|\in [1/4,3/4]$. Then, the sequence $-\log|I_{n_k}^{\threeda{\alpha}}(x)|/{\log n_k}$ cannot converge in measure as $k\to\infty$, and thus the proof of the proposition is complete. \end{proof}

\section{On the Farey and   continued fraction sequences of partitions: an explicit case}\label{sec:FCF}

In this section, we look at the relationship between the Farey and  the continued fraction sequences of partitions. Both cases are of particular interest as the Lochs indexes in both conversion directions can be computed explicitly, as  demonstrated with Proposition \ref{le:continued fractions Farey} and \ref{prop:indexFtoCF}, which moreover provide   direct proofs of the Lochs-type theorems for these cases. We also show stronger results, when compared to our general theorems,  including a probabilistic version of  the error term in the case of the Lochs index from continued fraction to Farey.

More precisely, for the probabilistic error term when going from continued fractions to Farey, namely Theorem~\ref{teo:prob-term-cf-farey}, we apply a strong result regarding the
asymptotic distribution of the logarithm of the continuants $\log q_n$ as $n\to\infty$. This result is a refinement over the classical Khinchin--L\'evy's Theorem (see~\eqref{eq:Levy}), which tells us that $(1/n)\log q_n\to\pi^2/(12 \log 2)$ as $n\to\infty$. The refinement comes in the form of an extra term, of probabilistic nature: the random variable $\log q_n$ is asymptotically Gaussian (see, e.g., \cite[Thm.~1]{FV:98} and \cite{Ibragimov}), as recalled below.

\begin{theorem}\label{thm:Gaussian}
Let $h=\pi^2/(6\log 2)$ be the entropy of the continued fraction sequence of partitions. Then, there exists a positive constant $B$ so that the sequence
\begin{equation}\label{eq:logqk-gaussian}
Z_n(x)=\frac{\log q_n(x)-\frac h2n}{B\sqrt{n}}
\end{equation}
is asymptotically Gaussian; i.e., for every $t\in\mathbb R$,
\[\left|\{x\in [0,1]:\,Z_n(x)<t
\}\right|= \frac{1}{\sqrt{2\pi}}\int_{0}^t e^{-w^2/2}dw+O\Big(\frac{1}{\sqrt n}\Big)
\]
as $n\to \infty$. (We recall that $\vert\cdot\vert$ stands for the Lebesgue measure.)
\end{theorem}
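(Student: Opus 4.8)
The plan is to reduce the statement to a central limit theorem, equipped with a Berry--Esseen error bound, for Birkhoff sums of the Gauss map, and then to invoke the classical estimates of Ibragimov and of Faivre. First I would express $\log q_n$ as such a Birkhoff sum up to a uniformly bounded error. Write $\varphi(x)=-\log x$ and, for $m\ge 1$, $S_m\varphi(x)=\sum_{k=0}^{m-1}\varphi(T_G^k(x))$. Starting from $x=(p_n+p_{n-1}T_G^n(x))/(q_n+q_{n-1}T_G^n(x))$ one obtains $|q_nx-p_n|=T_G^n(x)\,|q_{n-1}x-p_{n-1}|$, so by induction, using $|q_0x-p_0|=x$, one gets $-\log|q_nx-p_n|=\sum_{k=0}^n\varphi(T_G^k(x))=S_{n+1}\varphi(x)$. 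Combining this with the classical two-sided bound $1/(q_{n+1}+q_n)\le|q_nx-p_n|\le 1/q_{n+1}$, which gives $-\log|q_nx-p_n|=\log q_{n+1}+\epsilon_n$ with $\epsilon_n\in[0,\log 2]$ uniformly in $x$, and shifting the index, I obtain for every $n\ge 1$
\[ \log q_n(x)=S_n\varphi(x)+R_n(x),\qquad -\log 2\le R_n(x)\le 0. \]

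Next I would invoke the central limit theorem with rate for $S_n\varphi$. The Gauss measure $\mu$, with $d\mu=dx/((1+x)\log 2)$, is $T_G$-invariant and ergodic, the system $([0,1],T_G,\mu)$ is exponentially $\psi$-mixing (Gauss--Kuzmin--L\'evy; see \cite{IK2002}), the observable $\varphi=-\log x$ lies in $L^p(\mu)$ for every finite $p$, and $\varphi$ is not cohomologous to a constant, so its asymptotic variance $B^2$ is strictly positive; moreover
\[ \int_0^1\varphi\,d\mu=\frac1{\log 2}\sum_{j\ge 1}\frac{(-1)^{j-1}}{j^2}=\frac{\pi^2}{12\log 2}=\frac h2. \]
Under these hypotheses the normalised sums $(S_n\varphi-(h/2)n)/(B\sqrt n)$ converge in law to the standard Gaussian with a Berry--Esseen rate $O(1/\sqrt n)$ for the rate of convergence; this is precisely the content of \cite{Ibragimov} and \cite[Thm.~1]{FV:98}.

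To conclude, I would transfer this estimate to $Z_n$. By the first step, $Z_n(x)=(S_n\varphi(x)-(h/2)n)/(B\sqrt n)+R_n(x)/(B\sqrt n)$ with $|R_n(x)|/(B\sqrt n)\le (\log 2)/(B\sqrt n)=C_0/\sqrt n$ where $C_0=(\log 2)/B$; hence, for each $t\in\mathbb R$, the set $\{Z_n<t\}$ is sandwiched between the level sets $\{(S_n\varphi-(h/2)n)/(B\sqrt n)<t-C_0/\sqrt n\}$ and $\{(S_n\varphi-(h/2)n)/(B\sqrt n)<t+C_0/\sqrt n\}$. Applying the Berry--Esseen estimate of the previous paragraph to these two level sets, and using that the Gaussian distribution function is Lipschitz, absorbs the $C_0/\sqrt n$ shift into the error, so that $|\{x\in[0,1]:Z_n(x)<t\}|$ equals the Gaussian distribution function appearing in the statement plus $O(1/\sqrt n)$, uniformly in $t$.

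The hard part is the CLT-with-rate for $S_n\varphi$ itself, and the delicate point there is that $\varphi=-\log x$ is \emph{unbounded} (indeed of unbounded variation near $0$), so the Nagaev--Guivarc'h perturbation of the Gauss--Kuzmin--Wirsing transfer operator acting on functions of bounded variation cannot be applied to $\varphi$ directly. The standard workaround is truncation: set $\varphi_M=\min(\varphi,M)$, run the spectral argument for the bounded observable $\varphi_M$, and control the discarded part using $\mu(\{-\log x>M\})\asymp e^{-M}$ together with the finiteness of all moments of $\varphi$; optimising $M\sim\log n$ recovers the $O(1/\sqrt n)$ rate. Since this is exactly the analysis carried out in the cited references, in practice one simply quotes them.
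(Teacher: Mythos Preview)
The paper does not prove this theorem at all: it is stated as a known result and attributed to \cite{Ibragimov} and \cite[Thm.~1]{FV:98}, with the phrase ``as recalled below'' preceding the statement. So there is no paper proof to compare against; your sketch is going well beyond what the paper does.

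That said, your outline is the standard route and is correct. The Birkhoff-sum representation $\log q_n=S_n\varphi+R_n$ with $R_n\in[-\log2,0]$ is right (your recursion $|q_nx-p_n|=T_G^n(x)\,|q_{n-1}x-p_{n-1}|$ checks out via $q_n+q_{n-1}T_G^n(x)=(q_{n-1}+q_{n-2}T_G^{n-1}(x))/T_G^{n-1}(x)$), the mean computation $\int\varphi\,d\mu=\pi^2/(12\log2)=h/2$ is correct, and the sandwich-and-Lipschitz argument to absorb the bounded remainder into the $O(1/\sqrt n)$ term is the right finishing step. One small point you glossed over: the theorem is stated for \emph{Lebesgue} measure, while your CLT discussion is set up under the Gauss measure $\mu$. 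In practice this is harmless---the transfer-operator proofs in the cited references yield the Berry--Esseen bound for any absolutely continuous reference measure with smooth density, in particular Lebesgue---but it would be cleaner to say so explicitly rather than leave the reader to bridge the two measures.
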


\subsection{From    continued fractions   to   the Farey  sequence of partitions}\label{sssec:CF->F}

We begin by describing the Lochs index for the conversion  from the  continued fraction sequence of partitions  to  the Farey one.

\begin{lemma}\label{lem:ICFIF} For each irrational $x\in(0,1)$ and each $n\in\mathbb N$, one has
\begin{equation}\label{eq:CF=F-depths}
    I_n^{\mathcal{CF}}(x)=I_k^{\mathcal F}(x),\quad\text{where }k=q_n+q_{n-1}-1.
\end{equation}
\end{lemma}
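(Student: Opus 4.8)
The plan is to read off both intervals from the explicit descriptions already established and match them. By \eqref{eq:interval-CF}, the continued fraction interval $I_n^{\mathcal{CF}}(x)$ is the open interval with endpoints $p_n/q_n$ and $(p_{n-1}+p_n)/(q_{n-1}+q_n)$, the left endpoint being $p_n/q_n$ when $n$ is even and $(p_{n-1}+p_n)/(q_{n-1}+q_n)$ when $n$ is odd. So it suffices to show that the Farey interval $I_k^{\mathcal F}(x)$ at depth $k=q_n+q_{n-1}-1$ has exactly these endpoints in exactly this order.

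To do this I would apply Proposition~\ref{prop:FareyFF} with this $k$. That proposition writes $I_k^{\mathcal F}(x)$ in terms of the unique integers $m=m(x,k)$ and $r=r(x,k)$ satisfying $(r+1)q_m+q_{m-1}\le k+1<(r+2)q_m+q_{m-1}$, $m\ge 0$, and $0\le r<a_{m+1}$. The key observation is that the pair $(m,r)=(n,0)$ satisfies all three conditions for $k+1=q_n+q_{n-1}$: the chain of inequalities becomes $q_n+q_{n-1}\le q_n+q_{n-1}<2q_n+q_{n-1}$, which holds because $q_n\ge 1$; one has $0\le 0<a_{n+1}$ because $x$ is irrational, hence $a_{n+1}\ge 1$; and $n\ge 0$. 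By the uniqueness asserted in Proposition~\ref{prop:FareyFF}, we conclude $m(x,k)=n$ and $r(x,k)=0$. Plugging $(m,r)=(n,0)$ into the formula of Proposition~\ref{prop:FareyFF} then gives that $I_k^{\mathcal F}(x)$ has endpoints $p_n/q_n$ and $(p_n+p_{n-1})/(q_n+q_{n-1})$, with $p_n/q_n$ on the left precisely when $n$ is even, which is exactly $I_n^{\mathcal{CF}}(x)$ as recorded above. The degenerate case $n=0$, where $q_{-1}=0$ and $k=0$, is consistent with this: both intervals are $(0/1,1/1)$.

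There is essentially no serious obstacle here: the statement is a bookkeeping identity between the two explicit descriptions \eqref{eq:interval-CF} and Proposition~\ref{prop:FareyFF}. The only point that needs a moment of care is checking that $(n,0)$ genuinely meets the constraint $0\le r<a_{m+1}$, i.e.\ that $a_{n+1}\ge 1$, which is automatic for irrational $x$; once this is in place, the uniqueness clause of Proposition~\ref{prop:FareyFF} does the rest. If one preferred to avoid citing Proposition~\ref{prop:FareyFF}, the same conclusion can be reached directly from Theorem~\ref{thm:Hall}: the fractions $p_n/q_n$ and $(p_n+p_{n-1})/(q_n+q_{n-1})$ have denominators at most $q_n+q_{n-1}=k+1$, cross-difference $1$ by \eqref{eq:Teo2Khinchin}, and denominator sum $2q_n+q_{n-1}>k+1$, hence are consecutive endpoints of $F_k$; and $x$ lies between them by the semiconvergent chains \eqref{eq:mediant_even}--\eqref{eq:mediant} (the relevant neighbour of $p_n/q_n$ being the first semiconvergent $(p_n+p_{n-1})/(q_n+q_{n-1})$, which lies on the far side of the convergent $p_{n+1}/q_{n+1}$ from $x$). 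Going through Proposition~\ref{prop:FareyFF} is, however, the cleanest route.
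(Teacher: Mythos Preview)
Your proof is correct and follows essentially the same approach as the paper: both identify $(m,r)=(n,0)$ in Proposition~\ref{prop:FareyFF} for $k+1=q_n+q_{n-1}$ and read off the matching endpoints. Your version is in fact more careful than the paper's (you explicitly verify $0<a_{n+1}$, invoke the uniqueness clause, and treat the $n=0$ case), and the alternative route via Theorem~\ref{thm:Hall} you sketch is a nice addition.
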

\begin{proof} On the one hand, the interval $I_n^{\mathcal{CF}}(x)$
of the continued fraction sequence of partitions  is simply, up to reordering the endpoints depending on the parity of $n$, \begin{equation}\label{eq:InCF}
I_n^{\mathcal{CF}}(x) = \left(\frac{p_n}{q_n},\frac{p_n+p_{n-1}}{q_n+q_{n-1}}\right)\,.
\end{equation}

On the one hand, by Proposition \ref{prop:FareyFF}, the Farey interval of depth $k$ containing $x$, up to reordering of the endpoints depending on the parity of $m$, is given by
\begin{equation}\label{eq:IkF}
 I_k^{\mathcal{F}}(x) =  \Big(\frac{p_m}{q_m},\frac{(r+1) p_m+p_{m-1}}{(r+1) q_m+q_{m-1}}\Big)\,,
\end{equation}
where $m$ and $r$ are the only integers such that
\[ (r+1) q_m+q_{m-1}\leq k+1<(r+2)q_m+q_{m-1},\ m\geq 0,\text{ and }0 \leq r < a_{m+1}.
\]
Notice that if $k=q_n+q_{n-1}$, then $r=0$ and $m=n$, which shows that $I_k^{\mathcal F}(x)=I_n^{\mathcal{CF}}(x)$.
\end{proof}

 \begin{proposition} \label{le:continued fractions Farey}
For any irrational number $x\in (0,1)$ and any integer $n\ge 1$, one has  
$$L_n(x,\mathcal{CF},\mathcal{F})=2q_n(x)+q_{n-1}(x)-2.$$
\end{proposition}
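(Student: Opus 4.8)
The plan is to reduce everything to Lemma~\ref{lem:ICFIF} together with the fact that $\mathcal F$ is self-refining, and then to pin down the relevant range of Farey depths using Hall's criterion. Fix an irrational $x\in(0,1)$ and an integer $n\ge1$, and write $p_j=p_j(x)$, $q_j=q_j(x)$. By Lemma~\ref{lem:ICFIF}, the continued fraction interval $I_n^{\mathcal{CF}}(x)$ equals the Farey interval $I_k^{\mathcal F}(x)$ with $k=q_n+q_{n-1}-1$; I will denote by $J$ this common interval, whose endpoints are $p_n/q_n$ and $(p_n+p_{n-1})/(q_n+q_{n-1})$ (ordered according to the parity of $n$). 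Since $\mathcal F$ is self-refining, the intervals $I_\ell^{\mathcal F}(x)$ form a nested decreasing sequence as $\ell$ grows; hence for $\ell\le k$ one has $J=I_k^{\mathcal F}(x)\subseteq I_\ell^{\mathcal F}(x)$, whereas for $\ell>k$ one has $I_\ell^{\mathcal F}(x)\subseteq I_k^{\mathcal F}(x)=J$, so that $J\subseteq I_\ell^{\mathcal F}(x)$ can only occur when $I_\ell^{\mathcal F}(x)=J$. Consequently $L_n(x,\mathcal{CF},\mathcal F)$ is exactly the largest depth $\ell$ at which the Farey interval containing $x$ is still equal to $J$.

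The next step is to determine that largest depth. The equality $I_\ell^{\mathcal F}(x)=J$ holds if and only if the two endpoints of $J$ are consecutive endpoints of $F_\ell$: for then the open interval they bound belongs to $F_\ell$, and it is $J$, which contains $x$. Both fractions are irreducible and lie in $[0,1]$: indeed $\gcd(p_n,q_n)=1$, and any common divisor of $p_n+p_{n-1}$ and $q_n+q_{n-1}$ divides $(p_n+p_{n-1})q_n-(q_n+q_{n-1})p_n=p_{n-1}q_n-q_{n-1}p_n$, which equals $\pm1$ by~\eqref{eq:Teo2Khinchin}; the same identity gives $|p_n(q_n+q_{n-1})-(p_n+p_{n-1})q_n|=1$. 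Therefore Theorem~\ref{thm:Hall} applies and shows that these two fractions are consecutive endpoints of $F_\ell$ precisely when $q_n+q_{n-1}\le\ell+1$ and $q_n+(q_n+q_{n-1})>\ell+1$, that is, precisely for $q_n+q_{n-1}-1\le\ell\le 2q_n+q_{n-1}-2$. (Alternatively, one reaches the same range from Proposition~\ref{prop:FareyFF}: $I_\ell^{\mathcal F}(x)=J$ amounts to the parameters of that proposition being $m=n$, $r=0$, which by~\eqref{eq:m_and_r} happens exactly when $q_n+q_{n-1}\le\ell+1<2q_n+q_{n-1}$.)

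Combining the two steps, $L_n(x,\mathcal{CF},\mathcal F)$ is the maximum of $\{\ell:\ q_n+q_{n-1}-1\le\ell\le 2q_n+q_{n-1}-2\}$, namely $2q_n(x)+q_{n-1}(x)-2$, as claimed. I expect the only delicate points to be matters of bookkeeping: keeping the index shift in the definition of $E_n^{\mathcal F}$ (denominators $\le n+1$) consistent with Hall's criterion so that the final arithmetic yields exactly $2q_n+q_{n-1}-2$, and noting that for $\ell$ beyond this value the inclusion $I_\ell^{\mathcal F}(x)\subsetneq J$ is strict — because $x$, being irrational, is never the mediant at which $J$ first gets subdivided — so that no larger $\ell$ can occur in the supremum.
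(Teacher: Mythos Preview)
Your proof is correct and follows essentially the same approach as the paper: both start from Lemma~\ref{lem:ICFIF} to identify $I_n^{\mathcal{CF}}(x)$ with a Farey interval, and then determine the last Farey depth at which this interval has not yet been subdivided. The only cosmetic difference is that the paper phrases the second step directly via the mediant's denominator $2q_n+q_{n-1}$ (the interval first splits at depth $2q_n+q_{n-1}-1$), whereas you arrive at the same range $q_n+q_{n-1}-1\le\ell\le 2q_n+q_{n-1}-2$ through Hall's criterion (Theorem~\ref{thm:Hall}); these are equivalent formulations of the same observation.
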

\begin{proof} By Lemma~\ref{lem:ICFIF},    $I_n^{\mathcal{CF}}(x)=I_k^{\mathcal F}(x)$,  where $k=q_n+q_{n-1}-1$. This interval will split into two subintervals in the Farey sequence of partitions for the first time at depth $2q_n+q_{n-1}-1$ (for this is the denominator of the mediant of its endpoints). Hence, the proposition follows.\end{proof}

Theorem~\ref{thm:Gaussian}, together with the above result, yields the following theorem which gives a Lochs-type theorem for the conversion from the continued fraction sequence of partitions to the Farey one with an error term satisfying an asymptotically Gaussian law.

\begin{theorem}
\label{teo:prob-term-cf-farey} If $Z_n$ and $B$ are as in Theorem~\ref{thm:Gaussian} and $x$ is an irrational in $(0,1)$, then
\begin{equation}
    \frac{\log L_n(x, \mathcal{CF},\mathcal{F})}{n} = \frac{\pi^2}{12 \log 2} + \frac{B}{\sqrt{n}} Z_n(x)+ O\left(\frac 1n\right),\quad\text{as }n\to\infty,
\end{equation}
with $Z_n$ asymptotically Gaussian. In particular,
\[ \lim_{n\to\infty}\frac{\log L_n(x, \mathcal{CF},\mathcal{F})}{n} = \frac{\pi^2}{12 \log 2}\quad\text{a.e.}, \]
with respect to the Lebesgue measure.
\end{theorem}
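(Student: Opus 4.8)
The plan is to deduce the statement directly from the exact formula for the Lochs index in Proposition~\ref{le:continued fractions Farey} together with the Gaussian asymptotics of $\log q_n$ recorded in Theorem~\ref{thm:Gaussian}; there is no genuine new difficulty here, only a lower-order term to keep track of.

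First I would rewrite $L_n(x,\mathcal{CF},\mathcal{F})=2q_n(x)+q_{n-1}(x)-2$ by factoring out $q_n(x)$, namely
\[
L_n(x,\mathcal{CF},\mathcal{F})=q_n(x)\left(2+\frac{q_{n-1}(x)}{q_n(x)}-\frac{2}{q_n(x)}\right),
\]
so that, taking logarithms,
\[
\log L_n(x,\mathcal{CF},\mathcal{F})=\log q_n(x)+\log\left(2+\frac{q_{n-1}(x)}{q_n(x)}-\frac{2}{q_n(x)}\right).
\]
Since $q_n(x)\ge 1$ and $1\le q_{n-1}(x)\le q_n(x)$ for every $n\ge 1$, the argument of the second logarithm lies in the fixed interval $[1,3]$, so that term is $O(1)$, uniformly in $x$ and in $n$. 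Hence $\log L_n(x,\mathcal{CF},\mathcal{F})=\log q_n(x)+O(1)$.

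Next I would divide by $n$ and substitute the expression $\log q_n(x)=\tfrac h2 n+B\sqrt n\,Z_n(x)$ from \eqref{eq:logqk-gaussian}, with $h=\pi^2/(6\log 2)$, obtaining
\[
\frac{\log L_n(x,\mathcal{CF},\mathcal{F})}{n}=\frac h2+\frac{B}{\sqrt n}\,Z_n(x)+O\!\left(\frac1n\right),
\]
which is the claimed expansion, since $h/2=\pi^2/(12\log 2)$; the asymptotic Gaussianity of $Z_n$ is precisely Theorem~\ref{thm:Gaussian}. For the final ``in particular'' assertion I would \emph{not} invoke the central limit theorem (which would only give convergence in measure), but rather Khinchin--L\'evy's Theorem \eqref{eq:Levy}: since $\log q_n(x)/n\to\pi^2/(12\log 2)$ for a.e.\ $x$, the bound $\log L_n(x,\mathcal{CF},\mathcal{F})=\log q_n(x)+O(1)$ immediately yields $\log L_n(x,\mathcal{CF},\mathcal{F})/n\to\pi^2/(12\log 2)$ a.e.

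The only point requiring any care is the uniform $O(1)$ estimate on $\log\bigl(2+q_{n-1}/q_n-2/q_n\bigr)$, which is elementary and follows from $q_n\ge 1$ together with the monotonicity $q_{n-1}\le q_n$ of the continuants for $n\ge 1$; everything else is a substitution, so I do not anticipate a real obstacle.
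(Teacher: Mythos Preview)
Your proof is correct and follows essentially the same route as the paper's: both use the exact formula $L_n=2q_n+q_{n-1}-2$ from Proposition~\ref{le:continued fractions Farey}, factor out $q_n$, bound the remaining logarithm by $O(1)$, and substitute the definition of $Z_n$. You are in fact slightly more careful than the paper in two places---you give an explicit two-sided bound on the $O(1)$ term, and you correctly point out that the final a.e.\ assertion should rest on Khinchin--L\'evy's Theorem~\eqref{eq:Levy} rather than on the central limit statement.
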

\begin{proof}
Let $x\in(0,1)$ an irrational. Observe that, by Proposition~\ref{le:continued fractions Farey} and each positive integer $n$,
\[ \log L_n(x,\mathcal{CF},\mathcal F)=\log (2 q_n+q_{n-1}-2) = \log q_n + \log\left(2+\tfrac{q_{n-1}-2}{q_n}\right)
. \]
As the term $\log\left(2+\tfrac{q_{n-1}-2}{q_n}\right) $ is upper bounded by $\log 3$, then 
\begin{align*}
  \frac{\log L_n(x,\mathcal{CF},\mathcal F)}{n}
  =\frac{\log q_n}n+O\left(\frac 1n\right)
  =\frac{\pi^2}{12\log2}+\frac B{\sqrt n}Z_n(x)+O\left(\frac 1n\right)
\end{align*}
as $n\to\infty$ by Theorem~\ref{thm:Gaussian}.
\end{proof}

\subsection{Coming back:  from Farey to continued fractions}

In the case of the  conversion from  the Farey sequence of partitions to  the continued fraction one, the Lochs-type result we obtain comes directly from the expression of the  Lochs index  from  Proposition~\ref{prop:fare-to-cf} below. This is an almost everywhere result that is not covered by our more generally applicable Theorem~\ref{thm:resultae}.  Indeed, 
the difficulty in applying Theorem~\ref{thm:resultae} in this case stems from the fact that assertion (\ref{it:delta}) does not hold; this is because the weight function $f_1$ of the Farey  sequence of partitions is slowly-increasing. Nevertheless, our general result in measure (Theorem~\ref{thm:resultinmeasure}) does apply in this case, which is enough to ensure that the limit \eqref{eq:fare-to-cf} holds but only in measure.

\begin{proposition} \label{prop:indexFtoCF}
Let $x\in(0,1)$ be an irrational number and let $n$ be a positive integer. If $k=k(x,n)$ %
is the only positive integer such that
\begin{equation}\label{eq:defk}
q_k(x)+q_{k-1}(x)\leq n + 1 < q_{k+1}(x)+q_k(x)\,,
\end{equation}
then
\[ L_n(x,\mathcal{F},\mathcal{CF})=k. \]
\end{proposition}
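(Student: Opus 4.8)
The plan is to transfer the question to the Farey side by means of Lemma~\ref{lem:ICFIF} and then to use only that both $\mathcal F$ and $\mathcal{CF}$ are self-refining. First I would set, for $\ell\in\mathbb N$, $\kappa_\ell:=q_\ell(x)+q_{\ell-1}(x)-1$, so that Lemma~\ref{lem:ICFIF} applied with $n=\ell$ reads $I_\ell^{\mathcal{CF}}(x)=I_{\kappa_\ell}^{\mathcal F}(x)$. Since $a_{\ell+1}\ge1$ and $q_\ell\ge q_{\ell-1}$, the sequence $(q_\ell+q_{\ell-1})_\ell$ is nondecreasing, hence so is $(\kappa_\ell)_\ell$, and the defining relation \eqref{eq:defk} says precisely that $k$ is the largest index $\ell$ with $\kappa_\ell\le n$; that is, $\kappa_k\le n<\kappa_{k+1}$.

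For the lower bound $L_n(x,\mathcal F,\mathcal{CF})\ge k$ I would fix any $\ell\le k$ and note that $\kappa_\ell\le\kappa_k\le n$, so that the self-refining property of $\mathcal F$ gives $I_n^{\mathcal F}(x)\subseteq I_{\kappa_\ell}^{\mathcal F}(x)=I_\ell^{\mathcal{CF}}(x)$; thus every $\ell\le k$ belongs to $\{\ell:I_n^{\mathcal F}(x)\subseteq I_\ell^{\mathcal{CF}}(x)\}$.

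For the upper bound, since $\mathcal{CF}$ is self-refining one has $I_\ell^{\mathcal{CF}}(x)\subseteq I_{k+1}^{\mathcal{CF}}(x)$ whenever $\ell\ge k+1$, so it suffices to prove $I_n^{\mathcal F}(x)\not\subseteq I_{k+1}^{\mathcal{CF}}(x)=I_{\kappa_{k+1}}^{\mathcal F}(x)$, where $\kappa_{k+1}>n$. Here I would invoke Proposition~\ref{prop:FareyFF}: the integer $m$ occurring there is characterized by $q_m+q_{m-1}\le n+1<q_{m+1}+q_m$, hence $m=k$, and $I_n^{\mathcal F}(x)$ is the open interval with endpoints $p_k/q_k$ and $((r+1)p_k+p_{k-1})/((r+1)q_k+q_{k-1})$, where $0\le r<a_{k+1}$ and $(r+1)q_k+q_{k-1}\le n+1<(r+2)q_k+q_{k-1}$. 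The mediant of these two endpoints has denominator $(r+2)q_k+q_{k-1}$, so by the recursive construction of $\mathcal F$ the interval $I_n^{\mathcal F}(x)$ is unchanged at depths $n,\dots,d-1$ and is split at depth $d:=(r+2)q_k+q_{k-1}-1$. The inequalities on $r$ give $n<d$ as well as $d\le(a_{k+1}+1)q_k+q_{k-1}-1=q_{k+1}+q_k-1=\kappa_{k+1}$, and since $x$ is irrational it is not the mediant, so $I_d^{\mathcal F}(x)\subsetneq I_{d-1}^{\mathcal F}(x)=I_n^{\mathcal F}(x)$, whence $I_{\kappa_{k+1}}^{\mathcal F}(x)\subseteq I_d^{\mathcal F}(x)\subsetneq I_n^{\mathcal F}(x)$ by self-refiningness. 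This yields $I_n^{\mathcal F}(x)\not\subseteq I_{\kappa_{k+1}}^{\mathcal F}(x)$, hence $L_n(x,\mathcal F,\mathcal{CF})\le k$; as $0$ always lies in the set $\{\ell:I_n^{\mathcal F}(x)\subseteq I_\ell^{\mathcal{CF}}(x)\}$ and no $\ell\ge k+1$ does, the supremum is a well-defined natural number equal to $k$.

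I expect the only delicate part to be the bookkeeping in the upper bound: identifying the parameter $m$ of Proposition~\ref{prop:FareyFF} with the index $k$ of \eqref{eq:defk}, and verifying the chain $n<d\le\kappa_{k+1}$, which is exactly what guarantees that the first subdivision of $I_n^{\mathcal F}(x)$ occurs at a depth still at most $\kappa_{k+1}$; the bound $d\le\kappa_{k+1}$ uses the hypothesis $r<a_{k+1}$ in an essential way. Everything else follows formally from Lemma~\ref{lem:ICFIF} and the self-refining property of the two sequences of partitions.
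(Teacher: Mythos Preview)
Your proof is correct and follows essentially the same route as the paper's: both translate $I_\ell^{\mathcal{CF}}(x)$ to $I_{\kappa_\ell}^{\mathcal F}(x)$ via Lemma~\ref{lem:ICFIF}, sandwich $I_n^{\mathcal F}(x)$ between $I_{\kappa_{k+1}}^{\mathcal F}(x)$ and $I_{\kappa_k}^{\mathcal F}(x)$ by self-refiningness, and then argue strictness on the left. The only cosmetic difference is that the paper obtains the strict inclusion $I_{k+1}^{\mathcal{CF}}(x)\subsetneq I_n^{\mathcal F}(x)$ in one line by noting that the endpoint $(p_{k+1}+p_k)/(q_{k+1}+q_k)$ of $I_{k+1}^{\mathcal{CF}}(x)$ has denominator exceeding $n+1$ and hence is absent from $E_n^{\mathcal F}$, whereas you reach the same conclusion by computing via Proposition~\ref{prop:FareyFF} the exact depth $d$ at which $I_n^{\mathcal F}(x)$ first splits and checking $d\le\kappa_{k+1}$.
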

\begin{proof} By Lemma~\ref{lem:ICFIF}, $I_k^{\mathcal{CF}}(x)=I_{n'}^{\mathcal F}(x)$ and $I_{k+1}^{\mathcal{CF}}(x)=I_{n''}^{\mathcal F}(x)$ where $n'=q_k+q_{k-1}-1$ and $n''=q_{k+1}+q_k-1$. By assumption, $n'\leq n<n''$. Thus,
\[ I_{k+1}^{\mathcal{CF}}(x)=I_{n''}^{\mathcal F}(x)\subseteq I_n^{\mathcal F}(x)\subseteq I_{n'}^{\mathcal F}(x)=I_k^{\mathcal{CF}}(x). \]
Notice that one of the endpoints of $I_{k+1}^{\mathcal{CF}}(x)$ is $\frac{p_{k+1}+p_k}{q_{k+1}+q_k}$, which does not belong to $F_n$ (because $q_{k+1}+q_k>n+1)$. Hence, $I_{k+1}^{\mathcal{CF}}(x)\subsetneq I_n^{\mathcal F}(x)$ and, in particular, $I_n^{\mathcal F}(x)\nsubseteq I_{k+1}^{\mathcal{CF}}(x)$. We conclude that $k$ is the largest integer $k'$ such that $I_n^{\mathcal{F}}(x)\subseteq I_{k'}^{\mathcal{CF}}(x)$, i.e., $k=L_n(x,\mathcal F,\mathcal{CF})$.\qedhere
\end{proof}

\begin{proposition}%
\label{prop:fare-to-cf}
The following holds with respect to the Lebesgue measure:
\begin{equation}\label{eq:fare-to-cf}
\lim_{n\to\infty}\frac{L_n(x;\MC{F},\MC{CF})}{\log n} =\frac{12\log 2}{\pi^2}\quad\text{a.e.}
\end{equation}
\end{proposition}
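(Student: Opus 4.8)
The plan is to read the Lochs index off the explicit formula of Proposition~\ref{prop:indexFtoCF} and then feed it into Khinchin--L\'evy's Theorem~\eqref{eq:Levy}. Fix an irrational $x\in(0,1)$; for each positive integer $n$ let $k=k(x,n)$ be the integer of Proposition~\ref{prop:indexFtoCF}, so that $L_n(x,\MC F,\MC{CF})=k$ and
\[ q_k(x)+q_{k-1}(x)\le n+1<q_{k+1}(x)+q_k(x). \]
The first (routine) observation is that $k(x,n)\to\infty$ as $n\to\infty$: the map $n\mapsto k(x,n)$ is nondecreasing, and for each fixed $j$ the set of $n$ with $k(x,n)=j$ is finite, being an interval of integers of length $a_{j+1}(x)q_j(x)$. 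Hence the almost-everywhere convergence in~\eqref{eq:Levy} may legitimately be invoked along the (random) indices $k(x,n)$ and $k(x,n)+1$.

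Next I would take logarithms in the displayed inequality, using $q_\ell\le q_\ell+q_{\ell-1}\le 2q_\ell$ for $\ell\ge 1$, to obtain the two-sided bound
\[ \log q_k(x)\le\log(n+1)<\log q_{k+1}(x)+\log 2. \]
Dividing by $k$ and writing $\log q_{k+1}(x)/k=\bigl(\log q_{k+1}(x)/(k+1)\bigr)\cdot\bigl((k+1)/k\bigr)$ yields
\[ \frac{\log q_k(x)}{k}\le\frac{\log(n+1)}{k}<\frac{\log q_{k+1}(x)}{k+1}\cdot\frac{k+1}{k}+\frac{\log 2}{k}. \]
For a.e.\ $x$, Khinchin--L\'evy's Theorem~\eqref{eq:Levy} makes both $\log q_k(x)/k$ and $\log q_{k+1}(x)/(k+1)$ tend to $\pi^2/(12\log 2)$ as $n\to\infty$ (since $k=k(x,n)\to\infty$), while $(k+1)/k\to 1$ and $(\log 2)/k\to 0$; a squeeze then gives $\log(n+1)/k(x,n)\to\pi^2/(12\log 2)$ a.e. Taking reciprocals and using $\log(n+1)/\log n\to 1$ gives the assertion, namely
\[ \frac{L_n(x,\MC F,\MC{CF})}{\log n}=\frac{k(x,n)}{\log n}\longrightarrow\frac{12\log 2}{\pi^2}\quad\text{a.e.} \]

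I do not expect a genuine obstacle here: the only points requiring a little care are the verification that $k(x,n)\to\infty$, so that the a.e.\ statement of~\eqref{eq:Levy} applies along the index sequence $k(x,n)$, and the simultaneous control of $\log(n+1)$ from below by $\log q_k$ and from above by $\log q_{k+1}$ — it is this two-sided squeeze, rather than a one-sided estimate, that pins the limit down to exactly $\pi^2/(12\log 2)$ and hence the answer to $12\log 2/\pi^2$.
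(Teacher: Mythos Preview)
Your proof is correct and follows essentially the same route as the paper: identify $L_n(x,\MC F,\MC{CF})$ with the index $k(x,n)$ via Proposition~\ref{prop:indexFtoCF}, sandwich $\log n$ between $\log q_k$ and $\log q_{k+1}$ (up to bounded additive terms), and apply Khinchin--L\'evy's Theorem~\eqref{eq:Levy} along $k(x,n)\to\infty$ to squeeze out the limit. The only cosmetic differences are that the paper uses the bounds $2q_{k-1}\le n\le 2q_{k+1}$ and phrases the squeeze as $\log n/\log q_k\to 1$, whereas you bound $\log(n+1)$ directly between $\log q_k$ and $\log q_{k+1}+\log 2$; your justification of $k(x,n)\to\infty$ is also a bit more explicit.
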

\begin{proof}%
Let $x\in(0,1)$ an irrational and $n$ be a positive integer. By Proposition~\ref{prop:indexFtoCF}, if $k=k(x,n)$ is the only positive integer satisfying \eqref{eq:defk}, then $L_n(x,\mathcal F,\mathcal{CF})=k(x,n)$ and the following inequalities hold
\[ 2 q_{k(x,n)-1}(x)\leq   n  \leq 2 q_{k(x,n)+1}(x). \]
As $k(x,n)\to\infty$ as $n\to\infty$, Khinchin--L\'evy's Theorem (see \eqref{eq:Levy}) yields $\log q_{k(x,n)+1}/\log q_{k(x,n)-1}\to 1$ as $n\to\infty$ a.e. Hence, $\log n/\log q_{k(x,n)}\to 1$ as $n\to\infty$ a.e. Therefore, again by Khinchin--L\'evy's Theorem, we have that
\[ \lim_{n\to\infty}\frac{L_n(x,\mathcal F,\mathcal{CF})}{\log n}
   =\lim_{n\to\infty}\frac{k(x,n)}{\log n}
  =\lim_{n\to\infty}\frac{k(x,n)}{\log q_{k(x,n)}}=\frac{12\log 2}{\pi^2}\quad\text{a.e.}\qedhere \]
\end{proof}
 
 It would be interesting to characterize
 \[ \frac{L_n(x,\mathcal F,\mathcal{CF}) - \tfrac{\pi^2}{12\log 2} \log n}{\sqrt{\log n}} \]
 in law.
 The objective would be to obtain an analog to Theorem~\ref{teo:prob-term-cf-farey} but going from Farey to continued fractions.

\section{Lochs-type theorems for log-balanced sequences of partitions}\label{sec:convtheo}

In this section, we give the proofs of our general Lochs-type theorems: Theorems~\ref{thm:resultae} and~\ref{thm:resultinmeasure}.
The structure of this section is as follows. In Section~\ref{ssec:Lochs-type-defs}, we introduce some auxiliary notions which are useful for the proofs throughout this section. In Sections~\ref{ssec:Lochs-type theorem-ae} and~\ref{ssec:Lochs-type theorem-inm}, we prove our main result a.e.\ and in measure, respectively. %

\subsection{Preliminaries}\label{ssec:Lochs-type-defs}

We follow here the notion  of $\epsilon$-goodness introduced in \cite{DaField:01}.
\begin{definition} Let $\mathcal P=\{P_n\}_{n\in\mathbb N}$ be a sequence of partitions and let $f:\mathbb N\to\mathbb R$. If $x\in[0,1]\setminus E$, $n\in\mathbb N$, and $\epsilon>0$, we say that $I_n(x)\in P_n$ is \emph{$\epsilon$-good for $\mathcal P$ and $f$} if
\[ (1-\epsilon)f(n)<-\log\lambda(I_n(x))<(1+\epsilon)f(n) \]
or, equivalently,
\[ e^{-(1+\epsilon)f(n)}<\lambda(I_n(x))<e^{-(1-\epsilon)f(n)}. \]
\end{definition}

\begin{notation} In the proofs given in this section, for each $i=1,2$, we say that  $I_n^{i}(x)$ is 
\emph{$\epsilon$-good} if $I_n^{i}(x)$ is $\epsilon$-good for $\mathcal P^i$ and $f_i$.\end{notation}

\begin{definition} If $f:\mathbb N\to\mathbb R$ is a nondecreasing function  such that $f(n)$ tends to $+\infty$ as $n\to\infty$, we denote by $f^{[-1]}$ the function $f^{[-1]}:\mathbb R\to\mathbb N$ defined as  $f^{[-1]}(y)=\min\{n\in\mathbb N:\,f(n)\geq y\}$.
\end{definition}

\begin{rmk}\label{rmk:f[-1]} All the following assertions are immediate consequences of the definition:
\begin{enumerate}[(i)]
    \item\label{it:rmk1} $f^{[-1]}$ is nondecreasing;
    
    \item\label{it:rmk2}
    $f^{[-1]}(y)\to\infty$ as $y\to+\infty$;
    
    \item\label{it:rmk3} for each $y\in\mathbb R$,  $f(f^{[-1]}(y))\geq y$;

    \item\label{it:rmk5} if $n\in\mathbb N$ and $y\in\mathbb R$ are such that $n<f^{[-1]}(y)$, then $f(n)<y$; and
    
    \item\label{it:rmk6} if $f$ is strictly increasing and $\tilde f:[0,+\infty)\to\mathbb R$ is any strictly increasing function such that $\tilde f(n)=f(n)$ for each $n\in\mathbb N$, then $f^{[-1]}(y)=\lceil\tilde f^{-1}(y)\rceil$ for every $y\in\mathbb R$.
\end{enumerate}\end{rmk}

The following lemma tells us that, if $f$ is not  allowed  to grow too fast, we can reverse $f^{[-1]}$ by $f$ asymptotically.
\begin{lemma}
Let $f\colon \mathbb{N}\to \mathbb{R}$ be a nondecreasing function tending to infinity. If $f(n+1)-f(n)=o(f(n))$ as $n\to\infty$, then $f(f^{[-1]}(y))/y\to 1$ as $y\to+\infty$ (over $y\in \mathbb{R}$).
\label{lemma:ff-1}
\end{lemma}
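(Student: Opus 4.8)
The plan is to show that the composition $f \circ f^{[-1]}$ overshoots $y$ by at most one "step" of $f$, and that such a step is negligible compared to $y$. First I would record the two basic bounds that come directly from the definition of $f^{[-1]}$: for every $y \in \mathbb{R}$ with $f^{[-1]}(y) \geq 1$, writing $n = f^{[-1]}(y)$, we have by Remark~\ref{rmk:f[-1]}\eqref{it:rmk3} that $f(n) \geq y$, and by minimality of $n$ (Remark~\ref{rmk:f[-1]}\eqref{it:rmk5} applied to $n-1$) that $f(n-1) < y$. Hence
\[
  y \leq f(f^{[-1]}(y)) < f(n-1) + \bigl(f(n) - f(n-1)\bigr) < y + \bigl(f(n) - f(n-1)\bigr),
\]
so that
\[
  0 \leq \frac{f(f^{[-1]}(y))}{y} - 1 < \frac{f(n) - f(n-1)}{y} \leq \frac{f(n)-f(n-1)}{f(n-1)}.
\]

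The second and main step is to argue that the right-hand side tends to $0$ as $y \to +\infty$. By Remark~\ref{rmk:f[-1]}\eqref{it:rmk2}, $n = f^{[-1]}(y) \to \infty$ as $y \to +\infty$, so $n - 1 \to \infty$ as well. The hypothesis $f(n+1) - f(n) = o(f(n))$ as $n \to \infty$ is exactly the statement that $(f(m+1)-f(m))/f(m) \to 0$ as $m \to \infty$; applying this with $m = n-1$ gives $(f(n)-f(n-1))/f(n-1) \to 0$. Combining with the squeeze from the first display yields $f(f^{[-1]}(y))/y \to 1$, as desired. One should handle the trivial edge case $f^{[-1]}(y) = 0$ separately: this can happen only for $y \leq f(0)$, i.e. on a bounded set of $y$, so it is irrelevant for the limit as $y \to +\infty$.

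I do not expect any serious obstacle here; the statement is essentially a bookkeeping exercise around the definition of the generalized inverse. The only mild subtlety is making sure the chain of inequalities is set up so that the controlling quantity is $f(n)-f(n-1)$ divided by something that still tends to infinity (I use $f(n-1)$ rather than $y$ in the last step, though $y$ would work equally well since $y > f(n-1)$ eventually, in fact always once $f^{[-1]}(y)\ge 1$). An alternative phrasing: by Remark~\ref{rmk:ae}(3), the hypothesis is equivalent to $f(n+1)/f(n) \to 1$, and one could instead bound $f(f^{[-1]}(y))/y$ between $1$ and $f(n)/f(n-1)$ directly, which is perhaps cleaner. Either way the proof is three or four lines once the defining inequalities of $f^{[-1]}$ are written down.
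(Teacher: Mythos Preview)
Your proof is correct and essentially identical to the paper's: both use the sandwich $f(n-1) < y \leq f(n)$ with $n = f^{[-1]}(y)$, together with $n\to\infty$ and the hypothesis in the form $f(n)/f(n-1)\to 1$, to squeeze $f(f^{[-1]}(y))/y$ to $1$ (your ``alternative phrasing'' is exactly how the paper writes it). One trivial slip: in your displayed chain the first ``$<$'' should be ``$=$'' since $f(f^{[-1]}(y)) = f(n) = f(n-1) + (f(n)-f(n-1))$; the conclusion is unaffected.
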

\begin{proof}
By Remark~\ref{rmk:f[-1]} (\ref{it:rmk2}), $f^{[-1]}(y)\to\infty$ as $y\to+\infty$. Thus, the fact that $f(n+1)-f(n)=o(f(n))$ as $n\to\infty$ implies $f(f^{[-1]}(y))/f(f^{[-1]}(y)-1)\to 1$ as $y\to+\infty$. Moreover, by items~(\ref{it:rmk3}) and~(\ref{it:rmk5}) of Remark~\ref{rmk:f[-1]}, $f(f^{[-1]}(y)-1)<y\leq f(f^{[-1]}(y))$. Therefore, dividing through by $f(f^{[-1]}(y))$ implies the result, as both extremes tend to $1$ as $y\to+\infty$.
\end{proof}

\subsection{Lochs-type theorem a.e.}\label{ssec:Lochs-type theorem-ae}

In this section, we prove our general Lochs-type theorem a.e., namely  Theorem~\ref{thm:resultae}. This result is a consequence of Propositions~\ref{prop:limsupae} and~\ref{prop:liminfae}.

\begin{proposition}\label{prop:limsupae}
Let $\mathcal P^1$ and $\mathcal P^2$ be two a.e.\ log-balanced sequences of partitions with weight functions $f_1$ and $f_2$, respectively,  with respect to a measure $\lambda$. If $f_2$ is nondecreasing, then
\begin{align*}
    \limsup_{n\to\infty}\frac{f_2(L_n(x,\mathcal P^1,\mathcal P^2))}{f_1(n)}\leq 1\quad\text{a.e. }(\lambda).
\end{align*}
\end{proposition}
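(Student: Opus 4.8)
The plan is to fix $\epsilon\in(0,1)$ and combine the $\epsilon$-goodness of both sequences of partitions: from $\mathcal P^1$ we use the lower bound $\lambda(I^1_n(x))>e^{-(1+\epsilon)f_1(n)}$, and from $\mathcal P^2$ the upper bound $\lambda(I^2_\ell(x))<e^{-(1-\epsilon)f_2(\ell)}$. Since both sequences are a.e.\ log-balanced, there is a set of full $\lambda$-measure consisting of points $x\notin E^1\cup E^2$ for which there exist $N_1(x)\in\mathbb N$ and $\ell_1(x)\in\mathbb N$ such that $I^1_n(x)$ is $\epsilon$-good for every $n\geq N_1(x)$ and $I^2_\ell(x)$ is $\epsilon$-good for every $\ell\geq\ell_1(x)$. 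By Proposition~\ref{prop:Lochsisfinite} the Lochs index $L_n(x,\mathcal P^1,\mathcal P^2)$ is a.e.\ finite, so we may further restrict to a full-measure set on which it is an honest natural number for every $n$.

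Next I would fix such an $x$ and a large $n\geq N_1(x)$, and write $\ell:=L_n(x,\mathcal P^1,\mathcal P^2)$. If $\ell<\ell_1(x)$, then monotonicity of $f_2$ gives $f_2(\ell)\leq C(x)$, where $C(x):=f_2(\ell_1(x))$ is a constant independent of $n$. If instead $\ell\geq\ell_1(x)$, then $I^1_n(x)\subseteq I^2_\ell(x)$ forces $\lambda(I^1_n(x))\leq\lambda(I^2_\ell(x))$, whence $e^{-(1+\epsilon)f_1(n)}<e^{-(1-\epsilon)f_2(\ell)}$; applying $-\log$ yields $(1-\epsilon)f_2(\ell)<(1+\epsilon)f_1(n)$, that is, $f_2(\ell)<\tfrac{1+\epsilon}{1-\epsilon}f_1(n)$. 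Since $f_1(n)\to+\infty$, for all sufficiently large $n$ we have both $f_1(n)>0$ and $\tfrac{1+\epsilon}{1-\epsilon}f_1(n)>C(x)$, so in either case $f_2(L_n(x,\mathcal P^1,\mathcal P^2))<\tfrac{1+\epsilon}{1-\epsilon}f_1(n)$ for all large $n$. Dividing by $f_1(n)$ and taking $\limsup$ gives $\limsup_{n\to\infty}\tfrac{f_2(L_n(x,\mathcal P^1,\mathcal P^2))}{f_1(n)}\leq\tfrac{1+\epsilon}{1-\epsilon}$ for a.e.\ $x$, where the full-measure set depends on $\epsilon$.

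Finally I would let $\epsilon$ run over $\{1/k:k\geq 2\}$ and intersect the corresponding full-measure sets; on the resulting full-measure set one has $\limsup_{n\to\infty}\tfrac{f_2(L_n(x,\mathcal P^1,\mathcal P^2))}{f_1(n)}\leq\tfrac{1+1/k}{1-1/k}$ for every $k$, and letting $k\to\infty$ yields the claim. The only delicate point is that $L_n(x,\mathcal P^1,\mathcal P^2)$ is a moving index, so one cannot directly invoke ``$I^2_\ell(x)$ is eventually $\epsilon$-good'' at $\ell=L_n(x,\mathcal P^1,\mathcal P^2)$; this is exactly what the case split handles, using that the small values of $L_n$ contribute only a bounded numerator (as $f_2$ is nondecreasing) while $f_1(n)\to+\infty$.
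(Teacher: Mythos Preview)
Your proof is correct and follows essentially the same approach as the paper's: both compare the measures of $I^1_n(x)$ and $I^2_\ell(x)$ via $\epsilon$-goodness to force $f_2(\ell)<\tfrac{1+\epsilon}{1-\epsilon}f_1(n)$. The only cosmetic difference is that the paper packages the argument through the pseudo-inverse $m(n):=f_2^{[-1]}\bigl(\tfrac{1+\epsilon}{1-\epsilon}f_1(n)\bigr)$ and shows $L_n<m(n)$ eventually, whereas your case split on $\ell<\ell_1(x)$ versus $\ell\geq\ell_1(x)$ handles the same issue directly without introducing $f_2^{[-1]}$.
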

\begin{proof} %

Let $\epsilon\in(0,1)$. For each $n\in\mathbb N$, let $m(n):=f_2^{[-1]}\bigl(\frac{1+\epsilon}{1-\epsilon}f_1(n)\bigr)$. Let $m\in\mathbb N$ such that $m\geq m(n)$. Since $f_2$ is nondecreasing, $f_2(m)\geq f_2(m(n))\geq\frac{1+\epsilon}{1-\epsilon}f_1(n)$, by Remark~\ref{rmk:f[-1]} \eqref{it:rmk3}. Thus, $e^{-(1+\epsilon)f_1(n)}\geq e^{-(1-\epsilon)f_2(m)}$ for all $n\in\mathbb N$.
Hence, for each $x\in[0,1]\setminus(E^1\cup E^2)$, each $n\in\mathbb N$, and each $m\in\mathbb N$ such that $m\geq m(n)$, if $I^{1}_n(x)$ and $I^{2}_{m}(x)$ are $\epsilon$-good, then
\begin{align*}
    \lambda(I^{1}_n(x))>e^{-(1+\epsilon)f_1(n)}\geq e^{-(1-\epsilon)f_2(m)}>\lambda(I^{2}_m(x))
\end{align*}
and, in particular, $I_n^{1}(x)\nsubseteq I^{2}_m(x)$. Therefore, for each $x\in [0,1]\setminus(E^1\cup E^2)$ and each $n\in\mathbb N$ such that both $I_n^{1}(x)$ is $\epsilon$-good and $I_m^{2}(x)$ is $\epsilon$-good for each $m\geq m(n)$, we have that
$L_n(x,\mathcal P^1,\mathcal P^2)<m(n)$.

Since $-\log\lambda(I_n^{1}(x))/f_1(n)\to 1$ as $n\to\infty$ a.e., we have that $I^{1}_n(x)$ is $\epsilon$-good for $n$ large enough a.e. Notice that, since $f_1(n)\to+\infty$ as $n\to\infty$, Remark~\ref{rmk:f[-1]} \eqref{it:rmk2} implies $m(n)\to\infty$ as $n\to\infty$. Thus, the fact that $-\log\lambda(I_m^{2}(x))/f_2(m)\to 1$ as $m\to\infty$ a.e.\ implies that for a.e.\ $x$, for $n$ large enough (depending on $x$), $I_{m}^{2}(x)$ is $\epsilon$-good for each $m\in\mathbb N$ such that $m\geq m(n)$. Hence, $L_n(x,\mathcal P^1,\mathcal P^2)<m(n)$ for $n$ large enough a.e. Therefore, by Remark~\ref{rmk:f[-1]} \eqref{it:rmk5}, $f_2(L_n(x,\mathcal P^1,\mathcal P^2)) < \frac{1+\epsilon}{1-\epsilon}f_1(n)$ for $n$ large enough a.e. Taking $\epsilon\to 0$ gives the result.
\end{proof}

\begin{proposition}\label{prop:liminfae} Let $\mathcal P^1$ and $\mathcal P^2$ be two a.e.\ log-balanced sequences of partitions with weight functions $f_1$ and $f_2$, respectively, with respect to a measure $\lambda$ such that the following assertions hold:
\begin{enumerate}[(i)]
\item\label{it:liminfae2} $f_2$ is nondecreasing;
\item\label{it:liminfae3} for each $\eta\in(0,1)$, there exists $\epsilon>0$ such that
\begin{align*}
   \sum_{n=1}^\infty e^{-(1-\epsilon)f_1(n)+(1+\epsilon)f_2(m(n))}<+\infty,\quad\text{where }
    m(n):=f_2^{[-1]}\left((1-\eta)f_1(n)\right).
\end{align*}
\end{enumerate}
Then,
\begin{align*}
    \liminf_{n\to\infty}\frac{f_2(L_n(x,\mathcal P^1,\mathcal P^2))}{f_1(n)}\geq 1\quad\text{a.e.\ }(\lambda).
\end{align*}
\end{proposition}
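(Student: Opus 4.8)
The plan is to run a Borel--Cantelli covering argument in the spirit of~\cite{DaField:01}, complementary to the one behind Proposition~\ref{prop:limsupae}. Fix $\eta\in(0,1)$ and let $\epsilon>0$ be as provided by assumption~(\ref{it:liminfae3}), and set $m(n):=f_2^{[-1]}\bigl((1-\eta)f_1(n)\bigr)$. The first step is the observation that, by the very definition of the Lochs index as a supremum, it suffices to show that for a.e.\ $x$ one has $I^1_n(x)\subseteq I^2_{m(n)}(x)$ for all large $n$: this already yields $L_n(x,\mathcal P^1,\mathcal P^2)\geq m(n)$, and then, using that $f_2$ is nondecreasing together with Remark~\ref{rmk:f[-1]}~(\ref{it:rmk3}), $f_2(L_n(x,\mathcal P^1,\mathcal P^2))\geq f_2(m(n))\geq(1-\eta)f_1(n)$; hence $\liminf_n f_2(L_n(x,\mathcal P^1,\mathcal P^2))/f_1(n)\geq 1-\eta$ a.e., and letting $\eta\to 0$ along a sequence (intersecting countably many full-measure sets) completes the proof.

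To get $I^1_n(x)\subseteq I^2_{m(n)}(x)$ eventually a.e., I would introduce the bad set
\[
  A_n:=\bigl\{x\in[0,1]\setminus(E^1\cup E^2):\ I^1_n(x)\not\subseteq I^2_{m(n)}(x)\text{ and both }I^1_n(x),\,I^2_{m(n)}(x)\text{ are }\epsilon\text{-good}\bigr\},
\]
and bound $\lambda(A_n)$ by a covering argument. The key geometric remark is that if $x$ lies in both intervals but $I^1_n(x)\not\subseteq I^2_{m(n)}(x)$, then, both being open intervals, $I^1_n(x)$ must contain in its interior an endpoint of $I^2_{m(n)}(x)$. On $A_n$ that interval $I^2_{m(n)}(x)$ is $\epsilon$-good, so the endpoint in question is an endpoint of an $\epsilon$-good interval of $P^2_{m(n)}$; since the $\epsilon$-good intervals of $P^2_{m(n)}$ are pairwise disjoint and each has $\lambda$-measure $>e^{-(1+\epsilon)f_2(m(n))}$, there are fewer than $e^{(1+\epsilon)f_2(m(n))}$ of them, hence at most $2e^{(1+\epsilon)f_2(m(n))}$ such endpoints. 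Each of these endpoints lies in at most one interval of $P^1_n$, and every interval of $P^1_n$ arising this way is $\epsilon$-good, hence of measure $<e^{-(1-\epsilon)f_1(n)}$. Summing up,
\[
  \lambda(A_n)\ \le\ 2\,e^{-(1-\epsilon)f_1(n)+(1+\epsilon)f_2(m(n))}.
\]

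Assumption~(\ref{it:liminfae3}) then gives $\sum_n\lambda(A_n)<\infty$, so by the Borel--Cantelli lemma a.e.\ $x$ belongs to only finitely many $A_n$. On the other hand, since $\mathcal P^1$ and $\mathcal P^2$ are a.e.\ log-balanced and $m(n)\to\infty$ (because $f_1(n)\to\infty$, $\eta<1$, and Remark~\ref{rmk:f[-1]}~(\ref{it:rmk2})), for a.e.\ $x$ the intervals $I^1_n(x)$ and $I^2_{m(n)}(x)$ are $\epsilon$-good for all large $n$. For such an $x$ and such $n$, the fact that $x\notin A_n$ forces $I^1_n(x)\subseteq I^2_{m(n)}(x)$, which is precisely what was needed. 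I expect the only delicate point to be the estimate for $\lambda(A_n)$ — specifically the bookkeeping that quarantines the non-$\epsilon$-good intervals so that the count of relevant $P^2_{m(n)}$-endpoints is controlled by $e^{(1+\epsilon)f_2(m(n))}$ and the relevant $P^1_n$-intervals have measure $<e^{-(1-\epsilon)f_1(n)}$; the rest is routine.
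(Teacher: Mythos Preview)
Your proposal is correct and follows essentially the same Borel--Cantelli covering argument as the paper; the only cosmetic difference is in the bookkeeping for the bound $\lambda(A_n)\le 2e^{-(1-\epsilon)f_1(n)+(1+\epsilon)f_2(m(n))}$: you count endpoints of $\epsilon$-good $P^2_{m(n)}$-intervals (at most $2e^{(1+\epsilon)f_2(m(n))}$ of them, each contained in at most one $\epsilon$-good $P^1_n$-interval), whereas the paper sums over the pairs $(I^1_n(x),I^2_{m(n)}(x))$ directly and uses that at most two pairs share the same second entry together with $\sum\lambda(I^2_{m(n)})\le 1$. Both encode the same ``at most two bad $I^1$-intervals per $I^2$-interval'' observation and yield the identical estimate.
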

\begin{proof} %

Let $\eta\in(0,1)$. For each $n\in\mathbb N$, let $\mathcal J_n$ be the set of ordered pairs $(I^{1}_n(x),I^{2}_{m(n)}(x))$ such that $I^{1}_n(x)\nsubseteq I^{2}_{m(n)}(x)$. Let $(I^{1}_n(x),I^{2}_{m(n)}(x))\in\mathcal J_n$. Since $I^{1}_n(x)$ and $I^{2}_{m(n)}(x)$ share the point $x$, necessarily $I^{1}_n(x)$ contains at least one of the endpoints of $I^{2}_{m(n)}(x)$. Thus, there are no three ordered pairs in $\mathcal J_n$ having the same second entry.

Suppose $\epsilon>0$ (we will choose $\epsilon$ later). Let $\mathcal J_{n,\epsilon}$ be the set of ordered pairs in $\mathcal J_n$ whose  entries are both $\epsilon$-good. Notice that, if $(I^{1}_n(x),I^{2}_{m(n)}(x))\in\mathcal J_{n,\epsilon}$, then
\begin{align*}
    \lambda(I^{1}_n(x))<
      e^{-(1-\epsilon)f_1(n)}\quad\text{and}\quad
    e^{-(1+\epsilon)f_2(m(n))}<\lambda(I^{2}_{m(n)}(x))
\end{align*}
and, consequently,
\begin{align*}
    \lambda(I^{1}_n(x))
    <e^{-(1-\epsilon)f_1(n)+(1+\epsilon)f_2(m(n))}{\lambda(I^{2}_{m(n)}(x))}.
\end{align*}
Let $D_{n,\epsilon}
    :=\{x:\,\bigl(I^{1}_n(x),I^{2}_{m(n)}(x)\bigr)\in\mathcal J_{n,\epsilon}\}=\bigcup_{(I^{1}_n(x),I^{2}_{\bar m(n)}(x))\in\mathcal J_{n,\epsilon}}(I^{1}_n(x)\cap I^{2}_{m(n)}(x))$.
    Hence, this gives
 \begin{align*}
    \lambda(D_{n,\epsilon})
    &=\sum_{(I^{1}_n(x),I^{2}_{m(n)}(x))\in\mathcal J_{n,\epsilon}}\lambda(I^{1}_n(x)\cap I^{2}_{m(n)}(x))
    \leq\sum_{(I^{1}_n(x),I^{2}_{m(n)}(x))\in\mathcal J_{n,\epsilon}}\lambda(I^{1}_n(x))\\
    &\leq e^{-(1-\epsilon)f_1(n)+(1+\epsilon)f_2(m(n))}\sum_{(I^{1}_n(x),I^{2}_{m(n)}(x))\in\mathcal J_{n,\epsilon}}\lambda(I^{2}_{m(n)}(x))\\
    &\leq 2e^{-(1-\epsilon)f_1(n)+(1+\epsilon)f_2(m(n))}.
\end{align*}
(Recall that there are no three ordered pairs in $\mathcal J_n$ having the same second entry.)

Let $\epsilon$ be as in assertion~\eqref{it:liminfae3}. Hence, by the Borel--Cantelli Lemma we have that
\begin{align*}
  \lambda(\{x:\,x\in D_{n,\epsilon}\text{ i.o.}\})=0.
\end{align*}
Thus, $x\notin D_{n,\epsilon}$ for $n$ large enough a.e. Since $f_1(n)\to+\infty$ as $n\to\infty$, Remark~\ref{rmk:f[-1]} \eqref{it:rmk2} ensures that $m(n)\to\infty$ as $n\to\infty$. Thus, since $f_1$ and $f_2$ are a.e.\ weight functions of $\mathcal P_1$ and $\mathcal P^2$, $I^{1}_n(x)$ and $I^{2}_{m(n)}(x)$ are $\epsilon$-good for $n$ large enough a.e. By the definition of $D_{n,\epsilon}$, necessarily $I^{1}_n(x)\subseteq I^{2}_{m(n)}(x)$ for $n$ large enough a.e. In particular, $L_n(x,\mathcal P^1,\mathcal P^2)\geq m(n)$ for $n$ large enough a.e. By assertion~\eqref{it:liminfae2}, the fact that $f_2(n)\to+\infty$ as $n\to\infty$, and Remark~\ref{rmk:f[-1]} \eqref{it:rmk3}, one has 
\[  f_2(L_n(x,\mathcal P^1,\mathcal P^2))\geq f_2(m(n))\geq(1-\eta)f_1(n)\quad\text{for $n$ large enough a.e.}
\]
Taking $\eta\to 0$ gives the result.\end{proof}

Now we are ready to prove our main result a.e.

\mainResultae
\begin{proof} %
By Propositions~\ref{prop:limsupae} and~\ref{prop:liminfae}, it suffices to check that, for each $\eta\in(0,1)$, there exists $\epsilon>0$ so that
\begin{equation}
   \sum_{n=1}^\infty e^{-(1-\epsilon)f_1(n)+(1+\epsilon)f_2(m(n))}<+\infty,\quad\text{where }
    m(n):=f_2^{[-1]}\left((1-\eta)f_1(n)\right). \label{eq:convergentseries}
\end{equation}

Let $\eta>0$. We show that $\epsilon=\eta/2$ satisfies the condition. By assertion~\eqref{it:f2equivalentn1}, we can apply Lemma~\ref{lemma:ff-1} to $f=f_2$ and $y=(1-\eta)f_1(n)$ in order to obtain $f_2(m(n))/((1-\eta)f_1(n))\to 1$ as $n\to\infty$.

Hence, since $\epsilon=\eta/2$,  for $n$ large enough one has 
$$
  -(1-\epsilon)f_1(n)+(1+\epsilon)f_2(m(n))
    =\left(-\frac{\eta^2}2+o(1)\right)f_1(n)
    \leq-\frac{\eta^2}4f_1(n).
$$
It follows from assertion~\eqref{it:delta} that \eqref{eq:convergentseries} holds. This proves the theorem.\end{proof}

\begin{rmk}\label{rem:PP}
The assumptions of the above result are sufficient conditions. %
These assumptions are however not always necessary.  The next result shows that, for instance, the conclusion of Theorem~\ref{thm:resultae} still holds in the case where $\mathcal P^1=\mathcal P^2=\mathcal F$ even if the weight function $f_{\mathcal F}(n)=2\log n$ does not satisfy assumption~(\ref{it:delta}) of the theorem.
\end{rmk}

\begin{proposition}\label{prop:xPP}
Let $\mathcal P$ be an a.e.\ log-balanced sequence of partitions with weight function $f$ with respect to a measure $\lambda$. If $f$ is nondecreasing, then
\begin{align*}
    \frac{f(L_n(x,\mathcal P,\mathcal P))}{f(n)}\quad\text{as }n\to\infty,\quad \text{a.e.}\ (\lambda).
\end{align*}
\end{proposition}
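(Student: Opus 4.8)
The plan is to obtain the result by combining the upper bound which is already available from Proposition~\ref{prop:limsupae} with a matching lower bound that is essentially trivial in the diagonal case $\mathcal P^1=\mathcal P^2=\mathcal P$. So there is really only one new ingredient to isolate, namely the observation that $L_n(x,\mathcal P,\mathcal P)\ge n$.

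First I would apply Proposition~\ref{prop:limsupae} with $\mathcal P^1=\mathcal P^2=\mathcal P$ and $f_1=f_2=f$; this is legitimate since $f$ is assumed nondecreasing, and it yields directly
\[ \limsup_{n\to\infty}\frac{f(L_n(x,\mathcal P,\mathcal P))}{f(n)}\le 1\quad\text{a.e.\ }(\lambda). \]
For the reverse inequality, note that $\ell=n$ always belongs to the set $\{\ell\in\mathbb N:\,I_n(x)\subseteq I_\ell(x)\}$ defining the Lochs index, simply because $I_n(x)\subseteq I_n(x)$; hence $L_n(x,\mathcal P,\mathcal P)\ge n$ for every $x\in[0,1]\setminus E$ and every $n\in\mathbb N$ (and it is finite a.e.\ by Proposition~\ref{prop:Lochsisfinite}, so nothing pathological happens). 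Since $f$ is nondecreasing, $f(L_n(x,\mathcal P,\mathcal P))\ge f(n)$; and since $f(n)\to+\infty$, we have $f(n)>0$ for $n$ large enough, so dividing gives $f(L_n(x,\mathcal P,\mathcal P))/f(n)\ge 1$ for all large $n$ and all $x\notin E$. Therefore $\liminf_{n\to\infty} f(L_n(x,\mathcal P,\mathcal P))/f(n)\ge 1$ off $E$, in particular a.e.\ $(\lambda)$.

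Putting the two bounds together gives $\lim_{n\to\infty} f(L_n(x,\mathcal P,\mathcal P))/f(n)=1$ a.e.\ $(\lambda)$, which is the assertion. I do not expect any genuine obstacle: the entire content is the elementary remark $L_n(x,\mathcal P,\mathcal P)\ge n$, and the rest is a citation of Proposition~\ref{prop:limsupae}. The only point requiring a word of care is that the weight function $f$ need not be positive for small $n$, but since $f(n)\to+\infty$ this affects neither the division nor the limit. It is worth highlighting (as in Remark~\ref{rmk:farey} and Remark~\ref{rem:PP}) that this remains true even though $\mathcal P$ need not be self-refining and even though one may have $L_n(x,\mathcal P,\mathcal P)$ strictly larger than $n$ — e.g.\ $L_n(x,\mathcal F,\mathcal F)=2n$ on the intervals $(1/(n+1),1/n)$ — because the excess is absorbed asymptotically by the weight function under no extra hypothesis beyond monotonicity of $f$.
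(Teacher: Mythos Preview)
Your proposal is correct and follows essentially the same approach as the paper: the paper's proof also observes that $L_n(x,\mathcal P,\mathcal P)\ge n$ by definition, uses monotonicity of $f$ to get the liminf bound, and then cites Proposition~\ref{prop:limsupae} for the limsup bound. Your version simply adds a few extra words of justification (positivity of $f(n)$ for large $n$, finiteness via Proposition~\ref{prop:Lochsisfinite}) that the paper leaves implicit.
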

\begin{proof} Notice that, by definition of Lochs' index, $L_n(x,\mathcal P,\mathcal P)\geq n$ for each $n\in\mathbb N$. Moreover, since $f$ is nondecreasing, $f(L_n(x,\mathcal P,\mathcal P))\geq f(n)$ for every $n\in\mathbb N$. Thus, $\liminf_{n\to\infty} f(L_n(x,\mathcal P,\mathcal P)) / f(n) \geq 1$. Therefore, by Proposition~\ref{prop:limsupae}, the proof is complete.
\end{proof}

\begin{rmk} It is unclear whether assumption \eqref{it:liminfae3} in Proposition~\ref{prop:liminfae} is  necessary.  Consider in particular the case where $\mathcal P^2$ is a slight perturbation of $\mathcal P^1$ so that $f_1=f_2$. A natural question is whether it is possible, in this case, to have $\liminf_{n\to\infty}\frac{f_2(L_n(x,\mathcal P^1,\mathcal P^2))}{f_1(n)}<1$ a.e.\ or that this limit does not even exist. If possible, this situation could be regarded as revealing an intrinsic difficulty, in the sense that considering only the lengths of the intervals does not provide enough information for proving Lochs' conversion results.\end{rmk}

\subsection{Lochs-type theorem in measure}\label{ssec:Lochs-type theorem-inm}

In this section, we prove our general Lochs-type theorem in measure, namely  Theorem~\ref{thm:resultinmeasure}. This result is a consequence of Propositions~\ref{prop:limsupiinm} and~\ref{prop:liminm}.

\begin{proposition}\label{prop:limsupiinm}
Let $\mathcal P^1$ and $\mathcal P^2$ be two sequences of partitions that are log-balanced in measure with weight functions $f_1$ and $f_2$, respectively, with respect to $\lambda$, such that the following assertions hold:
\begin{enumerate}[(i)]
\item $\mathcal P^2$ is self-refining;
    
\item $f_2$ is nondecreasing.
\end{enumerate}    
Then,
\[ \limsup_{n\to\infty}\frac{f_2(L_n(x,\mathcal P^1,\mathcal P^2))}{f_1(n)}\leq 1\quad\text{in measure }(\lambda). \]
\end{proposition}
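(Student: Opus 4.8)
The plan is to retrace the a.e.\ argument of Proposition~\ref{prop:limsupae}, keeping the same auxiliary quantity $m(n)=f_2^{[-1]}\bigl(\tfrac{1+\epsilon}{1-\epsilon}f_1(n)\bigr)$, but to replace the one step that has no in-measure analogue — invoking the $\epsilon$-goodness of $I^2_m(x)$ \emph{for all} depths $m\ge m(n)$ at once — by a single comparison at depth $m(n)$, which becomes enough once one uses that $\mathcal P^2$ is self-refining. Throughout, $L_n(x,\mathcal P^1,\mathcal P^2)$ is finite for a.e.\ $x$ by Proposition~\ref{prop:Lochsisfinite} (note $\lambda(E^1)=0$ since $\mathcal P^1$ is log-balanced), so $f_2(L_n(x,\mathcal P^1,\mathcal P^2))$ is a.e.\ well-defined.

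I would fix $\epsilon\in(0,1)$ and set $m(n):=f_2^{[-1]}\bigl(\tfrac{1+\epsilon}{1-\epsilon}f_1(n)\bigr)$. As in the proof of Proposition~\ref{prop:limsupae}, Remark~\ref{rmk:f[-1]}~\eqref{it:rmk3} gives $(1-\epsilon)f_2(m(n))\ge(1+\epsilon)f_1(n)$, hence $e^{-(1+\epsilon)f_1(n)}\ge e^{-(1-\epsilon)f_2(m(n))}$, and $m(n)\to\infty$ as $n\to\infty$ by Remark~\ref{rmk:f[-1]}~\eqref{it:rmk2} because $f_1(n)\to+\infty$. Now let $x\in[0,1]\setminus(E^1\cup E^2)$ be such that both $I^1_n(x)$ and $I^2_{m(n)}(x)$ are $\epsilon$-good. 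Then
\[ \lambda(I^1_n(x))>e^{-(1+\epsilon)f_1(n)}\ge e^{-(1-\epsilon)f_2(m(n))}>\lambda(I^2_{m(n)}(x)), \]
so $I^1_n(x)\nsubseteq I^2_{m(n)}(x)$. Since $\mathcal P^2$ is self-refining, $I^2_\ell(x)\subseteq I^2_{m(n)}(x)$ for every $\ell\ge m(n)$, so the inclusion $I^1_n(x)\subseteq I^2_\ell(x)$ fails for every $\ell\ge m(n)$; hence $L_n(x,\mathcal P^1,\mathcal P^2)<m(n)$, and Remark~\ref{rmk:f[-1]}~\eqref{it:rmk5} then gives $f_2(L_n(x,\mathcal P^1,\mathcal P^2))<\tfrac{1+\epsilon}{1-\epsilon}f_1(n)$.

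Reading the last implication contrapositively, up to the $\lambda$-null set $E^1\cup E^2$ one has
\[ \Bigl\{x:\ \tfrac{f_2(L_n(x,\mathcal P^1,\mathcal P^2))}{f_1(n)}\ge\tfrac{1+\epsilon}{1-\epsilon}\Bigr\}\subseteq\{x:\ I^1_n(x)\text{ is not }\epsilon\text{-good}\}\cup\{x:\ I^2_{m(n)}(x)\text{ is not }\epsilon\text{-good}\}, \]
valid for $n$ large enough that $f_1(n)>0$. Because $f_1$ is an in-measure weight function of $\mathcal P^1$, the first set on the right has $\lambda$-measure tending to $0$; because $f_2$ is an in-measure weight function of $\mathcal P^2$ and $m(n)\to\infty$, so does the second. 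Thus the left-hand set has measure tending to $0$ for each fixed $\epsilon\in(0,1)$. Given $\delta>0$, I would then choose $\epsilon$ so small that $\tfrac{1+\epsilon}{1-\epsilon}\le 1+\delta$, which yields $\lambda\bigl(\{x:\ f_2(L_n(x,\mathcal P^1,\mathcal P^2))/f_1(n)>1+\delta\}\bigr)\to 0$ as $n\to\infty$, that is, $\limsup_{n\to\infty}f_2(L_n(x,\mathcal P^1,\mathcal P^2))/f_1(n)\le 1$ in measure.

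The only real obstacle relative to the a.e.\ case is the one the self-refining hypothesis is designed to remove: in measure one has no control over the $\epsilon$-goodness of $I^2_m(x)$ uniformly in the tail $m\ge m(n)$, so without the nestedness $I^2_\ell(x)\subseteq I^2_{m(n)}(x)$ there would be no way to deduce $I^1_n(x)\nsubseteq I^2_\ell(x)$ for all large $\ell$ from the single comparison at depth $m(n)$. Everything else is bookkeeping with $f_2^{[-1]}$ through Remark~\ref{rmk:f[-1]} and two applications of convergence in measure.
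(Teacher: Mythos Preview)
Your proof is correct and follows essentially the same route as the paper's: define $m(n)=f_2^{[-1]}\bigl(\tfrac{1+\epsilon}{1-\epsilon}f_1(n)\bigr)$, use $\epsilon$-goodness at depths $n$ and $m(n)$ together with the self-refining hypothesis to force $L_n<m(n)$, then bound the bad set by the union of the two ``not $\epsilon$-good'' sets and let $\epsilon\to 0$. The only cosmetic differences are that you phrase the measure estimate as ``tending to $0$'' rather than the paper's ``$\le 2\epsilon$'', and you explicitly invoke Proposition~\ref{prop:Lochsisfinite} for the finiteness of $L_n$; neither changes the argument.
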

\begin{proof} %

Let $\epsilon\in(0,1)$ and let $m(n):=f_2^{[-1]}\bigl(\frac{1+\epsilon}{1-\epsilon}f_1(n)\bigr)$. Arguing as in the proof of Proposition~\ref{prop:limsupae}, $e^{-(1+\epsilon)f_1(n)}\geq e^{-(1-\epsilon)f_2(m(n))}$ for all $n\in\mathbb N$. Hence, for each $x\in[0,1]\setminus(E^1\cup E^2)$ and each $n\in\mathbb N$, if $I^{1}_n(x)$ and $I^{2}_{m(n)}(x)$ are $\epsilon$-good, then
\begin{align*}
    \lambda(I^{1}_n(x))>e^{-(1+\epsilon)f_1(n)}\geq e^{-(1-\epsilon)f_2(m(n))}>\lambda(I^{2}_{m(n)}(x))
\end{align*}
and, in particular, $I_n^{1}(x)\nsubseteq I^{2}_{m(n)}(x)$. Moreover, as we are assuming that $\mathcal P^2$ is self-refining, it follows that $I_n^{1}(x)\nsubseteq I_m^{2}(x)$ for each $m\geq m(n)$. Therefore, $L_n(x,\mathcal P^1,\mathcal P^2)<m(n)$.
Hence,
\begin{align*}
    C_n
    &:=\{x:\,L_n(x,\mathcal P^1,\mathcal P^2)\geq m(n)\}\\
    &\subseteq\{x:\,I_n^{1}(x)\text{ is not $\epsilon$-good}\}\cup\{x:\,I^{2}_{m(n)}(x)\text{ is not $\epsilon$-good}\}.
\end{align*}
By Remark~\ref{rmk:f[-1]} \eqref{it:rmk2}, $m(n)\to\infty$ as $n\to\infty$. Thus, the  fact that $f_1$ and $f_2$ are weight functions in measure implies that, for $n$ large enough,
\[ \lambda(\{x:I^{1}_n(x)\text{ is $\epsilon$-good}\})>1-\epsilon\quad\text{and}\quad
\lambda(\{x:I^{2}_{m(n)}(x)\text{ is $\epsilon$-good}\})>1-\epsilon. \]
Hence,
$\lambda(C_n)\leq 2\epsilon$ for $n$ large enough. Moreover, by Remark~\ref{rmk:f[-1]} \eqref{it:rmk5}, if $x\in[0,1]\setminus C_n$, then
\[ f_2(L_n(x,\mathcal P^1,\mathcal P^2))<\frac{1+\epsilon}{1-\epsilon}f_1(n)=\left(1+\frac{2\epsilon}{1-\epsilon}\right)f_1(n). \]
Taking $\epsilon\to 0$ gives the result.\qedhere
\end{proof}

\begin{proposition}\label{prop:liminm} Let $\mathcal P^1$ and $\mathcal P^2$ be two sequences of partitions that are log-balanced in measure with weight functions $f_1$ and $f_2$, respectively, with respect to a measure $\lambda$, such that the following assertions hold:
\begin{enumerate}[(i)]

\item\label{it:teoinm2} $f_2$ is nondecreasing;

\item\label{it:teoinm3} for each $\eta\in(0,1)$, there exists some $\epsilon>0$ such that
\[ \lim_{n\to\infty}\,(1-\epsilon)f_1(n)-(1+\epsilon)f_2(m(n))=+\infty, \text{where }m(n):= f_2^{[-1]}\left((1-\eta)f_1(n)\right). \]
\end{enumerate}
Then,
\begin{align*}
    \liminf_{n\to\infty}\frac{f_2(L_n(x,\mathcal P^1,\mathcal P^2))}{f_1(n)}\geq 1\quad\text{in measure }(\lambda).
\end{align*}
\end{proposition}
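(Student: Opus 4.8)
The plan is to mirror the almost-everywhere argument from the proof of Proposition~\ref{prop:liminfae}, replacing the Borel--Cantelli step (which turns a convergent series into an eventually-a.e.\ statement) by the weaker requirement that a single sequence of measures tends to zero, and discarding the ``$\epsilon$-goodness'' exceptional sets using convergence in measure instead of an eventual-a.e.\ statement. Concretely, I would first fix $\eta\in(0,1)$, take $\epsilon>0$ as supplied by assertion~\eqref{it:teoinm3}, and set $m(n):=f_2^{[-1]}\bigl((1-\eta)f_1(n)\bigr)$; since $f_1(n)\to+\infty$, Remark~\ref{rmk:f[-1]}~\eqref{it:rmk2} gives $m(n)\to\infty$ as $n\to\infty$, which is what will let us control $I^2_{m(n)}(x)$ through the weight function $f_2$. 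Then, exactly as in Proposition~\ref{prop:liminfae}, I would let $\mathcal J_n$ be the set of pairs $\bigl(I^1_n(x),I^2_{m(n)}(x)\bigr)$ with $I^1_n(x)\nsubseteq I^2_{m(n)}(x)$, note that such an $I^1_n(x)$ must contain an endpoint of $I^2_{m(n)}(x)$ so that no second coordinate occurs more than twice, pass to the subset $\mathcal J_{n,\epsilon}$ of pairs whose two entries are both $\epsilon$-good, and let $D_{n,\epsilon}$ be the union of the intersections $I^1_n(x)\cap I^2_{m(n)}(x)$ over $\mathcal J_{n,\epsilon}$. The same chain of inequalities yields $\lambda(D_{n,\epsilon})\leq 2e^{-(1-\epsilon)f_1(n)+(1+\epsilon)f_2(m(n))}$, and this is precisely where the hypothesis enters: assertion~\eqref{it:teoinm3} says the exponent tends to $-\infty$, so $\lambda(D_{n,\epsilon})\to 0$ without any summability.

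Next I would observe that if $x\notin D_{n,\epsilon}$ and both $I^1_n(x)$ and $I^2_{m(n)}(x)$ are $\epsilon$-good, then $I^1_n(x)\subseteq I^2_{m(n)}(x)$, hence $L_n(x,\mathcal P^1,\mathcal P^2)\geq m(n)$ by definition of the Lochs index, and therefore $f_2(L_n(x,\mathcal P^1,\mathcal P^2))\geq f_2(m(n))\geq(1-\eta)f_1(n)$ by assertion~\eqref{it:teoinm2} and Remark~\ref{rmk:f[-1]}~\eqref{it:rmk3}. Consequently
\[
  \bigl\{x:\,f_2(L_n(x,\mathcal P^1,\mathcal P^2))<(1-\eta)f_1(n)\bigr\}
  \subseteq D_{n,\epsilon}\cup\{x:\,I^1_n(x)\text{ is not }\epsilon\text{-good}\}\cup\{x:\,I^2_{m(n)}(x)\text{ is not }\epsilon\text{-good}\}.
\]
Since $f_1$ (resp.\ $f_2$) is a weight function of $\mathcal P^1$ (resp.\ $\mathcal P^2$) in measure and $m(n)\to\infty$, the last two sets have measure tending to $0$; combined with $\lambda(D_{n,\epsilon})\to 0$ this gives $\lambda\bigl(\{x:\,f_2(L_n(x,\mathcal P^1,\mathcal P^2))<(1-\eta)f_1(n)\}\bigr)\to 0$, and since $\eta\in(0,1)$ was arbitrary the asserted inferior limit in measure follows.

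As in the a.e.\ case, the only point requiring any care is the combinatorial remark bounding the multiplicity of second coordinates in $\mathcal J_n$ (needed so that $\sum_{\mathcal J_{n,\epsilon}}\lambda(I^2_{m(n)}(x))\leq 2$); everything else is a routine transcription with ``for $n$ large enough a.e.''\ replaced throughout by ``up to a set of arbitrarily small measure''. In particular, in contrast with the companion limsup statement (Proposition~\ref{prop:limsupiinm}), self-refiningness of $\mathcal P^2$ is not used here, since $L_n\geq m(n)$ is deduced from the single containment $I^1_n(x)\subseteq I^2_{m(n)}(x)$ at depth $m(n)$ rather than from a monotonicity of the intervals $I^2_\ell(x)$.
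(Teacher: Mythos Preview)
Your proof is correct and follows essentially the same route as the paper's: fix $\eta$, choose $\epsilon$ from assertion~\eqref{it:teoinm3}, reuse the sets $D_{n,\epsilon}$ and the bound $\lambda(D_{n,\epsilon})\leq 2e^{-(1-\epsilon)f_1(n)+(1+\epsilon)f_2(m(n))}$ from Proposition~\ref{prop:liminfae}, and conclude by showing $\{x:\,L_n<m(n)\}$ is contained in $D_{n,\epsilon}$ together with the two ``not $\epsilon$-good'' sets, all of small measure. The only cosmetic difference is that the paper bounds each of the three sets by $\eta$ for large $n$ (yielding $\lambda(B_n)<3\eta$), whereas you note directly that each measure tends to $0$; the arguments are otherwise identical.
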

\begin{proof} %

Let $\eta\in(0,1)$ and let $\epsilon$ be as in \eqref{it:teoinm3}. Let $D_{n,\epsilon}$ be defined as in the proof of Theorem~\ref{prop:liminfae}. As argued in that proof, $\lambda(D_{n,\epsilon})\leq 2e^{-(1-\epsilon)f_1(n)+(1+\epsilon)f_2(m(n))}$. Because of the choice of $\epsilon$, $\lambda(D_{n,\epsilon})<\eta$ for $n$ large enough. Notice that, since $f_1(n)\to+\infty$ as $n\to\infty$, Remark~\ref{rmk:f[-1]} \eqref{it:rmk2} ensures that $m(n)\to\infty$ as $n\to\infty$. As $f_1$ and $f_2$ are weight functions in measure for $\mathcal P^1$ and $\mathcal P^2$, then, %
for $n$ large enough,
\[ \lambda(\{x:\,I^{1}_n(x)\text{ is $\epsilon$-good}\})>1-\eta\quad\text{and}\quad
   \lambda(\{x:\,I^{2}_{m(n)}(x)\text{ is $\epsilon$-good}\})>1-\eta. \]
Hence, since
\begin{align*}
    B_n
    &:=\{x:\,L_n(x,\mathcal P^1,\mathcal P^2)<m(n)\}\\
    &\subseteq\{x:\,I^{1}_n(x)\nsubseteq I^{2}_{m(n)}(x)\}\\
    &\subseteq D_{n,\epsilon}\cup\{x:\,I^{1}_n(x)\text{ is not $\epsilon$-good}\}\cup\{x:\,I^{2}_{m(n)}(x)\text{ is not $\epsilon$-good}\},
\end{align*}
we have that $\lambda(B_n)<3\eta$ for $n$ large enough. Moreover, by assertion~\eqref{it:teoinm2}, the fact that $f_2(n)\to+\infty$ as $n\to\infty$, and Remark~\ref{rmk:f[-1]} \eqref{it:rmk3}, if $x\in[0,1]\setminus B_n$, then
\[  f_2(L_n(x,\mathcal P^1,\mathcal P^2))\geq f_2(m(n))\geq(1-\eta)f_1(n)
\]
for $n$ large enough. Taking $\eta\to 0$ gives the result.\end{proof}

\mainResultim
\begin{proof} Arguing as in the proof of Theorem~\ref{thm:resultae}, for each $\eta>0$, if $\epsilon=\eta/2$, then $(1-\epsilon)f_1(n)-(1+\epsilon)f_2(m(n))\geq\frac{\eta^2}4f_1(n)$ for $n$ large enough. As $f_1(n)\to+\infty$ as $n\to\infty$, the theorem follows by Propositions~\ref{prop:limsupiinm} and~\ref{prop:liminm}.
\end{proof}

\begin{rmk} A situation analogous to that pointed out in Remark~\ref{rem:PP} holds also in connection with the above theorem, that is, the assumptions in Theorem~\ref{thm:resultinmeasure} are sufficient but not always necessary. Indeed, reasoning as in the proof of Proposition~\ref{prop:xPP}, one can prove that if $\mathcal P$ is log-balanced in measure and self-refining, then $L_n(x,\mathcal P,\mathcal P)/n\to 1$ in measure as $n\to\infty$.\end{rmk}

\section{Final remarks and open questions} \label{sec:conclusion} 

Theorem~\ref{thm:resultae} and~\ref{thm:resultinmeasure}
provide  asymptotic results for  Lochs' indexes for log-balanced sequences of partitions. %
While the assumption of log-bal\-anced\-ness could be regarded as relatively mild, it is crucial for our results. A natural question is whether we  can express sufficient conditions for log-balancedness, for instance in dynamical terms in the case of fibred systems. 

We have proved Lochs-type statements a.e.\ and in measure. Further studies concerning these Lochs-type results could be conducted, such as a multifractal analysis in the  spirit of \cite{BI:08bis}, or probabilistic estimates (central limit theorems) such as in \cite{Faivre:01,Wu:06}.

 One  motivation for the present work  comes from the experimental simulation of sources determined by sequences of partitions. (Here, by a source we mean the usual notion in information theory.)
   Lochs-type results come up naturally when we wonder how many of the initial digits in the binary expansion (which is computationally natural) of a random number are needed in order to deduce a prefix of a given length expressed in a different numeration system.
Let us first illustrate this with the  following question: how many binary digits of a random number do we need in order to generate the prefix of length $n$ of the characteristic Sturmian word (see Section \ref{subsec:sturm} for the definition)? We gave an answer to this question in Section~\ref{ssec:F&SB}.

Our work is also related to the probabilistic study of the behavior of data structures which are of fundamental importance in computer science. One such structure is the so-called \emph{trie}. A trie is a tree-like data-structure that
mimics the natural strategy to search a word
in a dictionary   by  comparing  words  via their prefixes.  Thus, the depth of a trie 
built from a set of words is related to ``coincidences'' between the words, that is,  to the
difficulty in distinguishing these words. For positive entropy, Cl\'ement, Flajolet and Vall\'ee \cite{DynSourceCFV01} show
that, when one builds a trie by picking $N$ random words produced by a reasonably well-made source with entropy $h>0$, its average depth has expectation asymptotically equal to $(1/h)\log N$. Furthermore, Cesaratto and Vall\'ee~\cite{MR3318040} have shown that its distribution is asymptotically Gaussian. In \cite{Aofa:20} the authors consider the average depth of the tries produced by random words produced by a source also as a mean to compare zero entropy sources.

We conjecture that there should be some relation between the weight function $f$, when it exists, and the average depth of the trie. %
We observe that the average depth of a trie is related to the inverse  of the weight function, for instance, in the case of positive entropy and for the Farey sequence. %
Note that $f^{-1}(\log N) = (1/h)\log N$, in the case of positive entropy $h$, as then we have the weight function $f(n)=hn$. For the case of  the Farey sequence of partitions, $f(n)=2 \log n$ and so $f^{-1}(\log N) = \sqrt{N}$. This coincides with the asymptotics for the expected value of the average depth of the Farey trie given in \cite{Aofa:20} up to a constant factor. %
However, this kind of connection does not hold for Stern-Brocot. %

\bigskip{
}
{\bf Acknowledgements.} We would like to thank Lo\"ick Lhote for discussions on the subject and Brigitte Vall\'ee for comments on earlier versions of this paper.

\newcommand{\etalchar}[1]{$^{#1}$}
\providecommand{\bysame}{\leavevmode\hbox to3em{\hrulefill}\thinspace}
\providecommand{\MR}{\relax\ifhmode\unskip\space\fi MR }
\providecommand{\MRhref}[2]{%
  \href{http://www.ams.org/mathscinet-getitem?mr=#1}{#2}
}
\providecommand{\href}[2]{#2}

\end{document}